\documentclass[11pt]{amsart}



\usepackage{amsmath,amssymb,latexsym}
\usepackage{amsfonts,amstext,amsthm,amscd}
\usepackage{graphicx}
\usepackage{mathrsfs}
\usepackage{hyperref}
\theoremstyle{plain}
\usepackage{color}
\usepackage[all]{xy}
\usepackage[top=1in, bottom=1in, left=1.2in, right=1.2in]{geometry}
\usepackage{dsfont}

\newtheorem{theorem}{Theorem}[section]

\newtheorem{lemma}[theorem]{Lemma}

\newtheorem{proposition}[theorem]{Proposition}


\newcommand{\CC}{\mathbb{C}}

\newcommand{\ZZ}{\mathbb{Z}}

\newcommand{\fc}{f_c}
\newcommand{\Fc}{F_c}

\usepackage[draft]{changes}
\definechangesauthor[color=blue]{jk}
\definechangesauthor[color=red]{ae}




\def\C{\mathbb{C}}

\def\R{\mathbb{R}}

\def\N{\mathbb{N}}

\def\Z{\mathbb{Z}}
\def\D{\mathbb{D}}

\def\fL{\mathfrak{L}}



\def\CP2{{\mathbb{CP}^2}}


\def\real{{\operatorname{Re}}}
\def\ima{{\operatorname{Im}}}
\def\dist{{\operatorname{dist}}}
\def\ps{{\operatorname{PS}}}




\def\esc{{\operatorname{I}_\infty}}
\def\Esc{{\operatorname{I}_\infty}}
\def\hh{{\operatorname{H}_\alpha}}


\def\HD{\operatorname{HD}}
\def\h1{\operatorname{H}_1}
\def\hsin{\operatorname{H}}

\usepackage{calrsfs}
\DeclareMathAlphabet{\pazocal}{OMS}{zplm}{m}{n}

\def\cC{{\pazocal{C}}}
\def\cD{{\pazocal{D}}}

\def\cF{{\pazocal{F}}}

\def\cJ{{\pazocal{J}}}

\def\cL{{\pazocal{L}}}

\def\cP{{\pazocal{P}}}

\def\cS{{\pazocal{S}}}

\def\cU{{\pazocal{U}}}






\def\hK{{\hat{K}}}








\def\fatou{\cF}
\def\julia{\cJ}

%



\def\fc{f_{\mathbf{c}}}

\setcounter{tocdepth}{2}

\begin{document}
\title{Thermodynamic formalism and hyperbolic Baker domains II: \\ real-analyticity of the Hausdorff dimension}
\author{Adri\'an Esparza-Amador} 
\address{Instituto de Ciencias Físicas y Matemáticas, Universidad Austral de Chile}
\email{adrian.esparza@uach.cl}

\date{\today}
\maketitle
\begin{abstract}
We consider the family of entire maps given by $f_{\ell,c}(z)=\ell+c-(\ell-1)\log c-e^z$, where $c\in\cD(\ell,1)$ and $\ell\in\N$, $\ell\geq2$. By using the property of $\fc$ to be dynamically projected to an infinite cylinder $\CC/2\pi\ZZ$ where the thermodynamic formalism tools are well-defined, we prove as a main result on this work, the real-analyticity of the map $c\mapsto \HD(\julia_r(\fc))$, here $\julia_r(\fc)$ is the \emph{radial Julia set}. 
\end{abstract}

\section{Introduction}
For a transcendental entire map $f:\C\to\C$, the \emph{Fatou set}, denoted by $\fatou(f)$, is defined as the set of points $z\in\C$ for which there exists a neighbourhood $U$ where the iterates $\{f^n\big|_U\}_{n\geq0}$ are normal in the sense of Montel. Its complement $\julia(f)=\C\setminus\fatou(f)$, called the \emph{Julia set} of $f$, is characterize by containing the \emph{chaotic dynamical} part of the map $f$. Moreover, it is a well known result from Baker that repelling periodic points are dense in the Julia set, \cite{baker1968}. 

Given an integer $\ell\geq2$, for $c\in\cD(\ell,1)$ we consider the one-parameter family of entire maps 
\[
f_c(z)=\ell z+c-(\ell-1)\log c - e^z.
\]
By noting that for $\ell=c=2$, the function $f_2$ corresponds to the one studied by Bergweiler in \cite{berg1995}, we named this as the \emph{Bergweiler's family}. Following Bergweiler's ideas, authors in \cite{EI2023} gave a general description of the Fatou set of $f_c$, see Theorem \ref{Fatou_set} below. Although $\fatou(f_c)$ contains a hyperbolic Baker domain in the left-half plane, and wandering domains in a vertical strip, is it possible to prove that $f_c$ is \emph{E-hyperbolic} \cite[Theorem 1]{EI2023}, which is an extension of the comcept of hyperbolicity (or expanding) of rational maps to transcendental maps, also referred as \emph{topologically hyperbolic}, see \cite{sta90} and \cite{mu2021} for further references. 

Thermodynamic formalism has proven to be an appropriate theory to provide probabilistic description of the chaotic dynamical part of a dynamical system. In the context of holomorphic expanding/hyperbolic dynamical systems gives a rich and detailed information about the geometry of Julia sets, see \cite{bar1995}, \cite{ku2002}, \cite{mu2008}, and the general survey \cite{mu2021} and the references therein. 

Following ideas of Kotus and Urba\'nski in \cite{KU05}, the authors in \cite{EI2023} studied the \emph{radial Julia set} $\julia_r(\fc)$, of each member of the family $f_c$ obtaining important properties of the geometry of this set by establishing the existence and uniqueness of Gibbs and equilibrium states, studying spectral and asymptotic properties of corresponding Perron-Frobenius operators, showing that dynamical systems are "strongly" mixing, and satisfy the Central Limit Theorem. Finally, as a main result, they obtained a Bowen's formula for the Hausdorff dimension of the radial Julia set, $\HD(\julia_r(f_c))$. 

The aim of this paper is to give a continuation of the previous work in \cite{EI2023}, in particular, to prove the real-analyticity of the Bowen's formula for the map $c\mapsto\HD(\julia_r(f_c))$.  We now briefly describe the organization of this paper. 

In Section \ref{sec_fam}, the dynamical description of the family $f_c$ and its Fatou set $\fatou(f_c)$ is given as well as the definition of the projected map $F_c:Q\to Q$ to the infinite cylinder $Q=\C/2\pi i\Z$, which is the central object of study. In Section \ref{sec_form}, we define the \emph{transfer operator} and enumerate the results on thermodynamic formalism from \cite{EI2023} that will be useful in the main theorem. Section \ref{sec_qc} contains \emph{quasi-conformal} facts of the family $F_c$ which imply its Hölder (analyticity) regularity. Finally, in Section \ref{sec_an} we prove the real-analyticity of the Hausdorff dimension of the radial Julia set. 

\section{The Bergweiler's family}\label{sec_fam}
In this section we give a general description of the dynamical plane of each member of the family of entire maps considered in this work. 

For $\ell\in\N$, $\ell\geq2$, take $c\in\cD(\ell,1)$ and consider the entire map given by 
\begin{equation}\label{def_fc}
f_c(z):=f_{\ell,c}(z) = \ell z+c-(\ell-1)\log c-e^z. 
\end{equation}
We notice that the special parametrization of the family $f_c$, allows us to conclude that $z_c=\log c$ is a fixed point. Since 
\begin{equation}\label{f_prime}
f_c'(z) = \ell - e^z, 
\end{equation}
its multiplier is given by 
\[
\lambda_c = \ell -c, 
\]
which concludes that $z_c=\log c$ is a (super-)attracting fixed point for $f_c$. Furthermore, from (\ref{def_fc}) we notice $f_c$ has no asymptotic values, and from (\ref{f_prime}) their critical points are 
\[
\text{Crit}(f_c) = \{\log \ell + 2\pi ik:k\in\Z\}.
\]
Since $f_c(z+2\pi ik) = f_c(z)+\ell(2\pi ik)$, the singular values of $f_c$ are given by 
\begin{equation}
\text{sing}(f_c^{-1})=f_c(\text{Crit}(f_c))=\{f_c(\log \ell)+\ell(2\pi ik):k\in\Z\}. \\
\end{equation}
We are in position to give a general description of the Fatou set of each member $f_c$ of the family, see \cite{EI2023} for further details. 
\begin{theorem}[Corollary 1, \cite{EI2023}]\label{Fatou_set}
Fix $\ell\in\N$, $\ell\ge2$. For $c\in\cD(\ell,1)$ the Fatou set $\cF(f_c)$ of the member $f_c$ as defined in (\ref{def_fc}) consists of the following sets: 
\begin{itemize}
\item A univalent invariant hyperbolic Baker domain $\cU_\ell$ containing the half-plane $\{\real (z)<-2\ell\}$ and all of its preimages. 
\item A simply connected (Böttcher) Schröeder domain $\cU_0$ containing the (super-)attracting fixed point $z_c=\log c$ and all of its preimages. 
\item Two sets of wandering domains from the $2k\pi i$-traslation of the attracting domain $\cU_0$ and all of its preimages. 
\end{itemize}
\end{theorem}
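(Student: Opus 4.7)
The plan is to exhibit each of the three families of Fatou components and then argue they exhaust $\fatou(f_c)$ via a singular-value analysis. For the attracting/Schröder domain, the multiplier at $z_c = \log c$ is $\lambda_c = \ell - c$, and since $c \in \cD(\ell,1)$ we have $|\lambda_c| < 1$; classical Koenigs (or Böttcher, in the exceptional super-attracting case) linearization produces an immediate basin $\cU_0$. Simple connectivity of $\cU_0$ follows from a standard Riemann--Hurwitz argument together with the fact that $f_c$ has no asymptotic values.

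For the Baker domain I would work on the half-plane $\Omega = \{\real z < -2\ell\}$. Since $|e^z|$ is exponentially small there, $f_c$ is close to the affine expansion $z \mapsto \ell z + (c - (\ell-1)\log c)$, and a direct estimate yields $f_c(\Omega) \subset \Omega$ with $\real f_c^n(z) \to -\infty$ uniformly on compacta of $\Omega$. Hence $\Omega$ lies inside a Fatou component $\cU_\ell$ on which iterates escape; because no critical point $\log\ell + 2\pi i k$ sits in this escape region, $f_c|_{\cU_\ell}$ is univalent, and the standard classification of invariant Baker domains identifies $\cU_\ell$ as hyperbolic. The wandering components then come from the functional equation $f_c(z + 2\pi i) = f_c(z) + 2\pi i \ell$, which shows each translate $\cU_0 + 2\pi i k$ is a Fatou component mapped by $f_c$ onto $\cU_0 + 2\pi i k \ell$. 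For $k \neq 0$ the imaginary parts of iterates grow like $2\pi |k|\ell^n$, so these components are wandering, and splitting $k > 0$ from $k < 0$ produces the two symmetric families.

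The main obstacle is proving that these components exhaust $\fatou(f_c)$. I would combine two ingredients: first, a Goldberg--Keen type analysis, by which every cycle of periodic Fatou components of an entire map is associated with an orbit of a singular value, so the only periodic cycles available are $\cU_0$ and $\cU_\ell$; second, the observation that $f_c$ is E-hyperbolic with only finitely many singular-value orbits modulo $2\pi i \Z$-translation, so any wandering Fatou component must be accounted for by a translated singular-value orbit and is therefore forced to be a translate of $\cU_0$. The delicate step is to prove $f_c(\log\ell) \in \cU_0$ for every $c \in \cD(\ell,1)$: this places the base critical orbit in the attracting basin, and by the functional relation pins down every other critical orbit inside its respective $2\pi i k$-translate. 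I would verify this by exploiting the explicit normalization in (\ref{def_fc}) to derive a direct estimate of $|f_c(\log\ell) - \log c|$ and comparing it with a Koenigs-disk radius around $z_c$.
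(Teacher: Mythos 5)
This statement is imported verbatim from [EI2023, Corollary 1] and is not proved in the present paper, so there is no in-text proof to compare against; I will assess your attempt against the argument in the cited source (which follows Bergweiler \cite{berg1995}) and against what is actually needed.

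The elementary ingredients you supply are fine: the multiplier computation $\lambda_c=\ell-c$ with $|\lambda_c|<1$, the forward-invariance estimate on $\{\real z<-2\ell\}$ from $|e^z|\le e^{-2\ell}$, and the functional equation $f_c(z+2\pi i)=f_c(z)+2\pi i\ell$ showing that the integer translates of $\cU_0$ are pairwise disjoint Fatou components with unbounded imaginary part under iteration. Simple connectivity, incidentally, needs no Riemann--Hurwitz: by Baker's theorem every multiply connected Fatou component of an entire map is a wandering domain, so any invariant component of $f_c$ is automatically simply connected.

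The first genuine gap is your exhaustion argument. You invoke a ``Goldberg--Keen type analysis'' plus ``finitely many singular-value orbits modulo translation'' to rule out further wandering components, but $f_c$ is \emph{not} in the Speiser class $S$: its singular set is $\{f_c(\log\ell)+2\pi i\ell k : k\in\Z\}$, which is infinite, so the no-wandering theorems of Sullivan/Eremenko--Lyubich/Goldberg--Keen do not apply to $f_c$ as written, and indeed $f_c$ does have wandering domains. The missing idea is exactly what Section~\ref{sec_fam} of this paper uses: pass to the quotient cylinder $Q=\C/2\pi i\Z$ via $\Pi$, obtain $F_c:Q\to Q$ with $\julia(F_c)=\Pi(\julia(f_c))$ and (via $H:Q\to\C^*$, Eq.~(\ref{eq9.1})) a map $G_c\in S$ with exactly two singular values. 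The classification of $\cF(F_c)$ (no wandering, at most two non-repelling cycles) is proved on the cylinder, and then lifted to the plane; your translated-orbit heuristic is essentially this cylinder argument in disguise, but stated vaguely it does not close the case. A related issue: citing E-hyperbolicity as an ingredient is circular, since in this paper E-hyperbolicity is \emph{deduced} from the present theorem.

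The second gap is the one you yourself flag as delicate, namely $f_c(\log\ell)\in\cU_0$ for all $c\in\cD(\ell,1)$, but your proposed remedy is unlikely to work. A direct computation gives
$f_c(\log\ell)-\log c=\ell\log(\ell/c)-(\ell-c)$, whose modulus does not tend to zero as $c\to\partial\cD(\ell,1)$, whereas the Koenigs linearization disk around $\log c$ shrinks as $|\lambda_c|=|\ell-c|\to 1$. So ``compare with a Koenigs-disk radius'' fails near the boundary of the parameter disk. One needs a global contraction argument on the basin (e.g.\ a Schwarz-type estimate after conjugating to the cylinder, as in \cite{berg1995}), not a local linearization-disk estimate. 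Finally, univalence of $f_c$ on $\cU_\ell$ does not follow from the mere absence of critical points in $\cU_\ell$ (consider $e^z$ on $\C$); it needs either an explicit injectivity estimate on the half-plane extended to the full component, or again the cylinder picture.
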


It is easy to see from the above result, that the Euclidean distance holds
$$
\dist(\cP(f_c),\julia(f_c))>0,
$$
where $\cP(f_c)$ denotes the post-singular set of $f_c$ define as 
$$
\cP(f_c)=\overline{\bigcup_{n\geq0}f_c^n(\text{sing}(f_c^{-1}})).
$$
Then $f_c$ belongs to the class $C$ described by Stallard in \cite{sta90}, also called E-hyperbolic or topologically hyperbolic class, see \cite{mu2021} for further references. It follows that the Lebesgue measure of the Julia set of $f_c$ is zero, \cite[Theorem B]{sta90}. 

\subsection{Projection on the cylinder $\C/\sim$}
Consider the infinite cylinder $Q=\C/\sim$ where $z\sim w$ if and only if $z-w\in 2\pi i\Z$. The quotient space $\C/\sim$ is provided with a Riemann surface structure from the quotient map 
\begin{displaymath}
\begin{array}{rccc}
\Pi: & \C & \rightarrow & Q \\
     & z & \mapsto & [z].
\end{array}
\end{displaymath}
The relation $f_c(z+2\pi ik) = f_c(z)+\ell(2\pi ik)$ means that $f_c$ respects the equivalence relation $\sim$, so it induces a unique map 
\[
\Fc:Q\to Q
\]
such that $\Pi\circ \fc = \Fc\circ \Pi$. Following \cite{berg19952} we know that $\julia(F_c)=\Pi(\julia(f_c))$, so from now on, we focus on $F_c$ instead of the original function $f_c$. Using Theorem \ref{Fatou_set} we can characterize the dynamics under $F_c$ of any point in the Fatou set $\fatou(F_c)$. 

\begin{proposition}
Given $z\in\fatou(F_c)$, there are only two options for the limit of the sequence $\{z_n=F_c^n(z)\}_n^\infty$ either 
$$
\lim_{n\to\infty}z_n=\log c,
$$
or
$$
\lim_{n\to\infty}z_n=-\infty. 
$$
\end{proposition}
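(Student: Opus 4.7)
The plan is to lift $z$ to $\C$ and reduce the question to the orbit asymptotics of $f_c$ on the two invariant Fatou components listed in Theorem \ref{Fatou_set}. The semiconjugacy $\Pi\circ f_c=F_c\circ\Pi$, combined with the fact that $\Pi$ is a holomorphic covering map, yields $\fatou(F_c)=\Pi(\fatou(f_c))$; so given $z\in\fatou(F_c)$ one can select $\tilde z\in\fatou(f_c)$ with $\Pi(\tilde z)=z$. Let $\tilde U$ be the Fatou component of $f_c$ containing $\tilde z$. By Theorem \ref{Fatou_set}, there exist $N\geq 0$ and $k\in\Z$ such that either $f_c^N(\tilde U)\subset\cU_\ell$, or $f_c^N(\tilde U)\subset\cU_0+2k\pi i$.

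In the Baker alternative, $\cU_\ell$ is an invariant hyperbolic Baker domain containing the half-plane $\{\real w<-2\ell\}$, and the identity $f_c(w)=\ell w+c-(\ell-1)\log c-e^w$ shows that once $\real w$ is sufficiently negative the exponential term is negligible, so $f_c$ essentially behaves like the affine expansion $w\mapsto\ell w+(\text{const})$ with $\ell\geq 2$; hence $\real f_c^m(w)\to-\infty$ for every $w\in\cU_\ell$. Since the real coordinate descends to a well-defined function on $Q$, projecting gives $\real F_c^n(z)\to-\infty$ in the cylinder. In the Schröeder alternative, the covariance $f_c(w+2\pi i j)=f_c(w)+2\ell j\pi i$ combined with the invariance $f_c(\cU_0)\subset\cU_0$ yields by induction $f_c^{N+m}(\tilde z)\in\cU_0+2\ell^m k\pi i$ for every $m\geq 0$, and since $\cU_0$ is the immediate basin of the (super-)attracting fixed point $\log c$, the translated orbit $f_c^{N+m}(\tilde z)-2\ell^m k\pi i\in\cU_0$ converges to $\log c$. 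Reducing modulo $2\pi i\Z$ yields $F_c^n(z)\to[\log c]$ in $Q$.

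The main obstacle, insofar as there is one, is bookkeeping: one must verify that every component of $\fatou(F_c)$ really is the $\Pi$-image of a component of $\fatou(f_c)$ listed in Theorem \ref{Fatou_set} (which follows from the classification being exhaustive on grand orbits together with $\Pi$ being a covering), and that the two asymptotic regimes $[\log c]$ and $-\infty$ are genuinely separated in $Q$. The latter is automatic because the real coordinate is well defined on $Q$ and takes the finite value $\real\log c$ at the attracting fixed point, while the end at $\real w\to-\infty$ is an ideal boundary point of the cylinder. With these two observations the dichotomy claimed in the proposition follows directly.
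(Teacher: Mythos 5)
Your argument is correct, and it supplies precisely the verification the paper leaves implicit: the Proposition is stated with only the remark that it follows from Theorem~\ref{Fatou_set}, and no proof is written out. Your route — lifting $z$ through the covering $\Pi$, invoking the classification of Fatou components of $f_c$, and then projecting the orbit asymptotics back to $Q$ — is the natural (and presumably intended) one.

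The one step that genuinely needed care is the wandering case, and you handled it correctly: the covariance $f_c(w+2\pi ij)=f_c(w)+2\ell j\pi i$ makes the forward orbit of a translate $\cU_0+2k\pi i$ land in $\cU_0+2\ell^m k\pi i$, so the orbit does not converge in $\C$, but after subtracting the drifting multiple of $2\pi i$ it agrees with the orbit of a point of $\cU_0$ and hence tends to $\log c$; on the cylinder the drift is quotiented out, giving $F_c^n(z)\to[\log c]$. This is exactly the reason the Proposition is phrased for $F_c$ rather than $f_c$. The Baker alternative and the two bookkeeping points you flag (exhaustiveness of $\Pi(\fatou(f_c))$, and the fact that $[\log c]$ and the $-\infty$ end are distinct limit regimes in $Q$) are also handled correctly.
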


On the other hand, since $\fc$ is E-hyperbolic, it can be proved that $\Fc$ is an expansive function. 
\begin{proposition}[Proposition 1, \cite{EI2023}]\label{prop1}
There exist $L>0$ and $\kappa>1$ such that for every $z\in\julia(\Fc)$ and every $n\ge1$ we have
\[
|(\Fc^n)'(z)|\geq L\kappa^n.
\]
\end{proposition}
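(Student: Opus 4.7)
My plan is to derive the exponential expansion from E-hyperbolicity of $\fc$ via the Schwarz--Pick lemma on the cylinder, and then to pass the estimate from the hyperbolic to the Euclidean metric. The key input is the separation $\eta:=\dist(\cP(\fc),\julia(\fc))>0$ noted right after Theorem \ref{Fatou_set}, which, because $\Pi$ is a local Euclidean isometry with $\cP(\Fc)=\Pi(\cP(\fc))$ and $\julia(\Fc)=\Pi(\julia(\fc))$, passes to the cylinder as $\dist(\cP(\Fc),\julia(\Fc))>0$. Set $W:=Q\setminus\overline{\cP(\Fc)}$: this open Riemann surface contains $\julia(\Fc)$, avoids all singular values of $\Fc$, and by forward invariance of $\overline{\cP(\Fc)}$ satisfies $\Fc^{-1}(W)\subseteq W$. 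Consequently $\Fc:\Fc^{-1}(W)\to W$ is an unramified holomorphic covering.

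Equipping $W$ with its hyperbolic metric $\rho_W$, each branch of $\Fc^{-1}$ is a holomorphic self-map of $W$ that is not surjective (its image is a proper open subset of $W$), so the strict form of Schwarz--Pick gives $|\Fc'(z)|_{\rho_W}>1$ pointwise on $\Fc^{-1}(W)$. It now suffices to produce $n_0\in\N$ and $\lambda>1$ with $|(\Fc^{n_0})'(z)|_{\rho_W}\ge\lambda$ \emph{uniformly} for $z\in\julia(\Fc)$, since then $|(\Fc^n)'(z)|_{\rho_W}\ge\lambda^{\lfloor n/n_0\rfloor}$ follows from the chain rule. To obtain this I would invoke the classical normal-family argument: inverse branches of $\Fc^n$ on a fixed disk $D\Subset W$ form a normal family (Montel applies because they avoid $\overline{\cP(\Fc)}$), their subsequential limits land in the Fatou set and are therefore either constant or miss $\julia(\Fc)$, and so their $\rho_W$-derivatives tend to $0$ uniformly on compact subsets of $D$, giving a uniform hyperbolic expansion for a sufficiently large iterate $\Fc^{n_0}$.

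To finish, transport the inequality to the Euclidean metric. For any $z\in\julia(\Fc)$ the Euclidean disk $D(z,\eta/2)$ lies inside $W$, so the standard comparison with a round disk yields $\rho_W(z)\le 2/\eta$ uniformly in $z$. A matching lower bound $\rho_W(z)\ge c_0>0$ holds on the bounded portion of $\julia(\Fc)$, while for the tail $\real(z)\to+\infty$ the explicit identity $\Fc'(z)=\ell-e^z$ already gives $|\Fc'(z)|\asymp e^{\real(z)}$, which dwarfs any required $\kappa^n$ and thus settles that region by direct computation. Combining the two estimates converts $|(\Fc^n)'(z)|_{\rho_W}\ge\lambda^n$ into the Euclidean bound $|(\Fc^n)'(z)|\ge L\kappa^n$ with $\kappa:=\lambda^{1/n_0}>1$ and $L>0$ independent of $z\in\julia(\Fc)$.

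The principal obstacle is the uniformity of $\lambda$ across the non-compact set $\julia(\Fc)\subset Q$: in the rational setting, strict pointwise Schwarz--Pick expansion upgrades automatically to a uniform exponential rate by compactness of the Julia set, but here $\julia(\Fc)$ may extend to infinity in the right half-cylinder, so one has to verify that the ``normal-family regime'' and the ``$|\Fc'|$ explicitly large'' regime together cover all of $\julia(\Fc)$. Reconciling the two regimes in a single constant, using the precise description of the Fatou components from Theorem \ref{Fatou_set}, is where the specific structure of the Bergweiler family genuinely enters the argument.
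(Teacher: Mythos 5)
Your Schwarz--Pick approach on $W=Q\setminus\overline{\cP(\Fc)}$ is a legitimate route, and you correctly identify the central obstacle: $\julia(\Fc)$ is non-compact because it extends into the right half-cylinder. However, the sketch contains a genuine gap at exactly that point. The hyperbolic density $\rho_W$ degenerates to $0$ as $\real z\to+\infty$ (that end of $Q$ is a puncture of $W$), so the conversion
$|(\Fc^{n})'(z)| = |(\Fc^{n})'(z)|_{\rho_W}\cdot\rho_W(z)/\rho_W(\Fc^{n}(z))$
does not turn a uniform hyperbolic lower bound into a uniform Euclidean one: the factor $\rho_W(z)$ has no positive lower bound on the tail, and the normal-family argument you invoke only yields uniform hyperbolic contraction on a \emph{compact} piece $K\subset\julia(\Fc)$, not on all of $\julia(\Fc)$. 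Saying that the explicit estimate $|\Fc'(z)|=|\ell-e^z|$ "settles that region" does not close the gap, because a single orbit $z,\Fc z,\dots,\Fc^{n}z$ may alternate an arbitrary number of times between the tail (where you want the explicit estimate) and the compact piece (where you want the hyperbolic/normal-family estimate), and your sketch never explains how these per-visit estimates are concatenated into a single rate $\kappa^n$.

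What is missing is the block-decomposition step, and this is precisely what the paper's own machinery does in Lemma \ref{lemma9.4} (which extends this proposition to a parameter neighbourhood, citing it in the process): one sets $A_k=\{z:|(\Fc^k)'(z)|>2\}$, observes that the tail $\{\real z>M\}$ lies in $A_1$ by the explicit derivative, covers the compact remainder $\julia(\Fc)\cap Q_M$ by finitely many $A_1,\dots,A_q$ (here your normal-family or Schwarz--Pick argument can be used to show these open sets exhaust the compact piece), and then chops any orbit into at most $\lceil n/q\rceil$ blocks of length $\le q$, each contributing a factor $>2$, with the leftover estimated by $u=\inf_{\julia(\Fc)}|\Fc'|>0$. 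This yields $|(\Fc^n)'(z)|\ge u^q\,2^{\lfloor n/q\rfloor}$, i.e. $L=u^q$ and $\kappa=2^{1/q}$. Once you have the finite cover, staying in the Euclidean metric is cleaner than carrying $\rho_W$ through the tail; the hyperbolic argument is best viewed as a tool for producing the cover on the compact piece, not as the medium in which the final exponential rate is assembled.
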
 

We consider the escaping subset on $\julia(\Fc)$ given by 
\[
\Esc(\Fc) = \{z:\in\julia(\Fc):\lim_{n\to\infty}\Fc^n(z)=+\infty\},
\]
and its corresponding set on $\julia(\fc)=\Pi^{-1}(\julia(\Fc))$ as 
\[
\esc(\fc)=\{w\in\julia(\fc):\Pi(w)\in\Esc(\Fc)\}.
\]
Given the ``asymptotic'' similarity to the exponential function $z\mapsto -e^z$, it is not difficult to notice that $\HD(\esc(F))=\HD(\esc(f_c))=\HD(\julia(F_c))=\HD(\julia(f_c))=2$, which makes these sets dynamically rather boring. 

We denote by $\julia_r(\Fc)$, called the \emph{radial Julia set}, to the complement on $\julia(\Fc)$ of the escaping set, 
\[
\cJ_r(\Fc)=\cJ(\Fc)\setminus \Esc(\Fc)\ \text{ and }\ \cJ_r(\fc)=\cJ(\fc)\setminus \esc(\fc).
\]
This way, the thermodynamic formalism will be developed on this rather interesting set \cite[Lemma 4]{mu2021}.

\section{Thermodynamic formalism}\label{sec_form}
In the present section, we present the most important properties in \cite{EI2023} derived from the thermodynamic formalism that will be useful in the rest of the paper.
\subsection{Transfer Operator and Topological Pressure}
Let $\cC_b=\cC_b(\julia(\Fc))$ be the Banach space of all bounded continuous complex valued functions on $\julia(\Fc)$. For $t>0$, define the (transfer) Perron-Frobenius operator $\cL_t:\cC_b\to\cC_b$ given by 
\begin{equation}\label{transfer_op}
\cL_tg(z) = \sum_{x\in\Fc^{-1}(z)}|\Fc'(x)|^{-t}g(x). 
\end{equation}
For any $n\geq1$, its $n$-iterate is defined as 
\begin{equation}\label{ite_transfer}
\cL_t^n g(z) = \sum_{x\in\Fc^{-n}(z)}|(\Fc^n)'(x)|^{-t}. 
\end{equation}
We use the transfer operator and its iterates to define the \emph{topological pressure} of the (non-compact) Julia set of $F$. First, for every $t>0$ and every $z\in Q\setminus \cP(\Fc)$, we define the \emph{lower} and \emph{upper topological pressure} as 
\[
\underline{P}(t,z) = \liminf_{n\to\infty}\dfrac{1}{n}\log \cL_t^n\mathds{1}(z)\ \text{ and }\ \overline{P}(t,z)=\limsup_{n\to\infty}\dfrac{1}{n}\log \cL_t^n\mathds 1(z).
\]

\begin{proposition}[Proposition 2, \cite{EI2023}]
Let $t>1$ and $z\in\julia(\Fc)$, then the following statements hold:
\begin{itemize}
\item $\underline{P}(t,z)$ and $\overline{P}(t,z)$ are independent on the choice of $z\in\julia(\Fc)$. 
\item $\|\cL_t\mathds 1\|_\infty = \sup\{\cL_t\mathds 1(z):z\in\julia(\Fc)\}<+\infty$.
\item For every $t>1$, $\underline{P}(t)$ and $\overline{P}(t)$ are convex, continuous and strictly decreasing functions and $\lim_{t\to\infty}\overline{P}(t)=-\infty$. 
\item $\lim_{\real(z)\to+\infty}\cL_t\mathds 1(z)=0.$
\end{itemize}
\end{proposition}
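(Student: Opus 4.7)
My plan is to handle the items in the order (2), (4), (1), (3), since (2) and (4) are analytic facts about the single-iterate operator that (1) and (3) build upon. The fundamental input is an explicit asymptotic description of the preimages of $\Fc$. Lifting $z\in Q$ to $\tilde z\in\C$, a preimage corresponds to a solution $w$ of $\ell w+c-(\ell-1)\log c-e^w=\tilde z+2\pi ik$ for some $k\in\Z$; for $|k|$ large the $e^w$ term dominates and one obtains a sequence $w_k$ with $\real(w_k)=\log(2\pi|k|)+O(1)$ and $|\Fc'(w_k)|=|\ell-e^{w_k}|\asymp|k|$. Therefore $\cL_t\mathds 1(z)$ is comparable to $\sum_{k\neq 0}|k|^{-t}$ up to a bounded contribution from small $k$, proving (2) with a bound locally uniform in $z$; part (4) is then immediate, since as $\real(z)\to+\infty$ every preimage $w_k$ satisfies $\real(w_k)\to+\infty$, and the whole sum vanishes by dominated convergence.

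For (1), I would use a Koebe distortion argument enabled by E-hyperbolicity. Since $\dist(\cP(\Fc),\julia(\Fc))>0$, there is $r>0$ such that for every $z_0\in\julia(\Fc)$ and every $n\ge 1$ the preimages of the disk $B(z_0,2r)$ under $\Fc^n$ consist of pairwise disjoint univalent branches, on each of which Koebe's theorem provides a distortion constant independent of $n$. Given $z_1,z_2\in\julia(\Fc)$, I would cover a smooth path from $z_1$ to $z_2$ away from $\cP(\Fc)$ by finitely many $r$-disks; pulling back term by term in the Perron--Frobenius sum (\ref{ite_transfer}) yields $C^{-1}\cL_t^n\mathds 1(z_2)\le \cL_t^n\mathds 1(z_1)\le C\cL_t^n\mathds 1(z_2)$ for some $C=C(z_1,z_2)$ independent of $n$. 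Taking $\tfrac{1}{n}\log$ kills $C$ and forces $\underline P(t,z_1)=\underline P(t,z_2)$ and likewise for $\overline P$.

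For (3), convexity follows from H\"older's inequality: for $\lambda\in(0,1)$ and $t_1,t_2>1$,
\[
\cL_{\lambda t_1+(1-\lambda)t_2}^n\mathds 1(z)\le \left(\cL_{t_1}^n\mathds 1(z)\right)^{\lambda}\left(\cL_{t_2}^n\mathds 1(z)\right)^{1-\lambda},
\]
and passage to $\tfrac{1}{n}\log$ with the appropriate limit gives convexity of $\overline P$ (and of $\underline P$, since by (1) the two are $z$-independent and the same argument applies on both sides). Strict monotonicity and the limit at $+\infty$ come from Proposition \ref{prop1}: for $t_2>t_1$ one has
\[
\cL_{t_2}^n\mathds 1(z)\le (L\kappa^n)^{t_1-t_2}\cL_{t_1}^n\mathds 1(z),
\]
so $\overline P(t_2)\le \overline P(t_1)-(t_2-t_1)\log\kappa$, yielding strict decrease with slope at most $-\log\kappa<0$ and $\overline P(t)\to-\infty$ as $t\to\infty$. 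Continuity on $(1,\infty)$ is automatic for a convex real-valued function on an open interval.

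The step I anticipate being most delicate is (1): the cylinder $Q$ and its Julia set are unbounded, so the naive Koebe-chain argument produces a comparison constant $C=C(z_1,z_2)$ whose uniformity along sequences escaping to $\real(z)\to+\infty$ is not immediate. Reconciling this with (4), which makes $\cL_t^n\mathds 1$ small in that regime, will likely require separating $\julia(\Fc)$ into a compact central piece, where a finite Koebe chain works with a uniform constant, and an outer tail $\{\real(z)>R\}$, where the preimage estimates underpinning (2) and (4) already control $\cL_t^n\mathds 1$ uniformly. Verifying that $\tfrac{1}{n}\log\cL_t^n\mathds 1$ has the same asymptotic growth on both pieces is the main technical point.
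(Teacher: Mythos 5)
This proposition is quoted from \cite{EI2023} and is not reproved in the present paper, so there is no in-text proof to compare against line by line; the route you take, however, is the standard Kotus--Urba\'nski style argument and matches what one expects in \cite{EI2023} and \cite{KU05}.

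Items (2) and (4) are handled correctly: preimages of a point $[z]$ in $Q$ are indexed by integers $k$, the far-out ones have $|\Fc'|\asymp |k|$, so $\cL_t\mathds 1(z)$ is controlled by $\sum_{k\neq 0}|k|^{-t}$ uniformly in $z$; and as $\real(z)\to+\infty$ every preimage drifts to $+\infty$ too, so the sum vanishes. Your argument for (1) is also sound, but the concern you flag at the end is not actually an obstacle: independence of $z$ is a pointwise statement, so for a fixed pair $z_1,z_2$ the Koebe-chain constant $C(z_1,z_2)$, however large, is independent of $n$ and disappears after taking $\tfrac1n\log$. No decomposition of $\julia(\Fc)$ into a compact core and a tail is needed to prove (1); that kind of uniformity is only an issue if one wants estimates uniform in $z$, which this item does not require.

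The one real gap is the convexity of $\underline P$ in (3). Your H\"older inequality gives, for each $n$ and $z$,
\[
\frac1n\log\cL^n_{\lambda t_1+(1-\lambda)t_2}\mathds 1(z)\le \lambda\cdot\frac1n\log\cL_{t_1}^n\mathds 1(z)+(1-\lambda)\cdot\frac1n\log\cL_{t_2}^n\mathds 1(z),
\]
and $\limsup$ is subadditive, so this does deliver convexity of $\overline P$. But $\liminf$ is \emph{super}additive, so the same passage does not yield $\underline P(\lambda t_1+(1-\lambda)t_2)\le \lambda\underline P(t_1)+(1-\lambda)\underline P(t_2)$; and a pointwise $\liminf$ of convex functions need not be convex (e.g.\ $g_n(t)=(-1)^n(t-2)$ has $\liminf g_n=-|t-2|$). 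Saying ``the same argument applies on both sides'' is therefore too quick. The usual repair in this setting is to establish first that the limit defining the pressure actually exists, for instance by combining the bound $\cL_t^{n+m}\mathds 1(z)\le\|\cL_t^m\mathds 1\|_\infty\,\cL_t^n\mathds 1(z)$ with a Koebe-distortion comparison between $\|\cL_t^m\mathds 1\|_\infty$ and $\cL_t^m\mathds 1(z)$ (using (4) to handle the tail near $\real(z)=+\infty$), so that Fekete's lemma applies and $\underline P=\overline P$; convexity of the common value then follows from the $\overline P$ argument alone. The strict monotonicity and divergence to $-\infty$ via Proposition \ref{prop1}, and continuity from finiteness plus convexity on an open interval, are fine.
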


\subsection{Conformal measures} Let $t,\alpha\in\R$. We say that a measure $\nu$ on $\julia(\Fc)$ is $(t,\alpha)$-\emph{conformal} if for every Borel set $A\subset \julia(\Fc)$ where $\Fc\big|_A$ is injective, we have 
\[
\nu(\Fc(A))=\int_A\alpha|\Fc'|^td\nu.
\]

\begin{theorem}[Theorem 3, \cite{EI2023}]
For every $t>1$, there exists $\alpha_t>0$ and a $(t,\alpha_t)$-conformal measure $m_t$ for the map $F:\julia(\Fc)\to\julia(\Fc)$ with $m_t(\julia(\Fc))=1$. 
\end{theorem}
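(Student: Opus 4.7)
My plan is to produce $m_t$ as a probability eigenmeasure of the dual transfer operator $\cL_t^*$ and then convert the eigenvalue relation into the conformality identity. If $\cL_t^* m_t = \alpha_t m_t$ for some $\alpha_t>0$, meaning $\int\cL_t g\,dm_t = \alpha_t\int g\,dm_t$ for all $g\in\cC_b$, then testing against $g = \mathds{1}_A\cdot |\Fc'|^t$ on a Borel set $A$ where $\Fc$ is injective (approximating by bounded continuous functions away from critical orbits) yields $m_t(\Fc(A)) = \alpha_t\int_A |\Fc'|^t\,dm_t$, which is precisely the $(t,\alpha_t)$-conformality condition. Thus the task reduces to producing a fixed point of the nonlinear normalization $T\nu := \cL_t^*\nu / \int\cL_t\mathds{1}\,d\nu$ on the space of Borel probability measures on $\julia(\Fc)$; this $T$ is well-defined since $0 < \int\cL_t\mathds{1}\,d\nu \le \|\cL_t\mathds{1}\|_\infty < +\infty$ by Proposition~2 of \cite{EI2023}, and weak-$\ast$ continuous.

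To construct a fixed point, fix a base point $z_0\in\julia(\Fc)$; the iterates unwind linearly as $T^k\delta_{z_0} = (\cL_t^*)^k\delta_{z_0}/\cL_t^k\mathds{1}(z_0)$, so that $T^k\delta_{z_0}(E) = \cL_t^k\mathds{1}_E(z_0)/\cL_t^k\mathds{1}(z_0)$ for every Borel $E\subset\julia(\Fc)$. The crucial step is to establish uniform tightness of the family $\{T^k\delta_{z_0}\}_{k\ge 0}$. By Theorem \ref{Fatou_set} the Baker domain $\cU_\ell$ together with all of its preimages covers a left half-cylinder, so $\julia(\Fc)$ is bounded on the left and tightness reduces to controlling mass near $\real(z)\to +\infty$. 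The fourth bullet of Proposition~2 of \cite{EI2023} supplies $\cL_t\mathds{1}(z)\to 0$ as $\real(z)\to +\infty$; combining this with the expansion estimate of Proposition \ref{prop1}, the right-tail decay can be propagated uniformly through iterates to give $\sup_k T^k\delta_{z_0}(\{\real>R\})\to 0$ as $R\to\infty$. Granted tightness, Prokhorov's theorem makes the weak-$\ast$ closed convex hull of $\{T^k\delta_{z_0}\}_k$ convex, nonempty, weak-$\ast$ compact and $T$-invariant; the Schauder-Tychonoff fixed point theorem then yields $m_t$ with $\cL_t^* m_t = \alpha_t m_t$, where $\alpha_t := \int\cL_t\mathds{1}\,dm_t > 0$. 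Iterating the eigenvalue relation gives $\alpha_t^n = \int\cL_t^n\mathds{1}\,dm_t$, which by the third bullet of Proposition~2 of \cite{EI2023} identifies $\alpha_t$ with $\exp(\overline{P}(t))$.

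The main obstacle is the tightness step: $\julia(\Fc)$ is non-compact in the cylinder $Q$ since it contains the escaping set $\Esc(\Fc)$, so weak-$\ast$ compactness of probability measures does not come for free. Propagating the right-tail decay of $\cL_t\mathds{1}$ uniformly under iteration demands quantitative control both on the number of preimages in $\julia(\Fc)$ of a point with large real part, and on the corresponding weights $|\Fc'|^{-t}$; this is where the expansion of Proposition \ref{prop1} combines with the explicit preimage geometry of $\Fc(z) = \ell z + c - (\ell-1)\log c - e^z$ — in particular the very large $|\Fc'(z)| = |\ell - e^z|$ for $\real(z)$ large — to force geometric decay of the relevant Perron sums.
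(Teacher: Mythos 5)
Your overall strategy — realize $m_t$ as an eigenmeasure of $\cL_t^*$ via a Schauder--Tychonoff fixed-point argument on a tight family of normalized Perron sums, then deduce conformality by testing $\cL_t^* m_t = \alpha_t m_t$ against $\mathds{1}_A|\Fc'|^t$ for injectivity sets $A$ — is exactly the Denker--Urba\'nski construction that \cite{EI2023} (following \cite{KU05}) carries out, and your reduction of the eigenvalue relation to the conformality identity is correct. The identification $\alpha_t = e^{P(t)}$ at the end is also the right bookkeeping.

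The one place where your sketch is materially incomplete is the claim that the tail decay ``propagates uniformly through iterates.'' The estimate one actually obtains by peeling off one preimage layer is
\[
\cL_t^k\mathds{1}_{\{\real>R\}}(z_0)\;=\;\sum_{y\in\Fc^{-(k-1)}(z_0)}|(\Fc^{k-1})'(y)|^{-t}\!\!\sum_{\substack{x\in\Fc^{-1}(y)\\ \real(x)>R}}\!\!|\Fc'(x)|^{-t}\;\le\;\varepsilon(R)\,\cL_t^{k-1}\mathds{1}(z_0),
\]
where $\varepsilon(R)\to 0$ comes from $t>1$ and the geometry of preimages (for which, as you note, $|\Fc'(x)|$ grows like $e^{\real(x)}$ while the number of preimages in a unit strip is bounded). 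But this bounds the tail mass by $\varepsilon(R)\cL_t^{k-1}\mathds{1}(z_0)$, whereas the denominator in $T^k\delta_{z_0}$ is $\cL_t^{k}\mathds{1}(z_0)$. To conclude $\sup_k T^k\delta_{z_0}(\{\real>R\})\to 0$ you need a uniform lower bound on the ratio $\cL_t^{k}\mathds{1}(z_0)/\cL_t^{k-1}\mathds{1}(z_0)$, and this does not follow from the tail estimate alone — indeed, since $\cL_t\mathds{1}(z)\to 0$ as $\real(z)\to+\infty$, there is no pointwise lower bound on $\cL_t\mathds{1}$ to invoke naively. The missing ingredient is the bounded-distortion comparison behind the first bullet of Proposition~2 of \cite{EI2023}: via Koebe distortion along inverse branches, $\cL_t^{n}\mathds{1}(w)\asymp\cL_t^{n}\mathds{1}(z_0)$ uniformly in $n$ for $w$ ranging over a fixed compact piece of $\julia(\Fc)$, which applied to a fixed preimage $y_1\in\Fc^{-1}(z_0)$ gives $\cL_t^{k}\mathds{1}(z_0)\ge |\Fc'(y_1)|^{-t}\cL_t^{k-1}\mathds{1}(y_1)\ge c\,|\Fc'(y_1)|^{-t}\cL_t^{k-1}\mathds{1}(z_0)$. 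You should make this step explicit; without it the tightness claim is not justified, and it is precisely the point at which the non-compactness of $\julia(\Fc)$ bites.
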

Now, let $\cL_t^*:\cC_b(\julia(\Fc))^*\to\cC_b(\julia(\Fc))^*$ denote the dual operator of $\cL_t$ given by the formula 
\[
\int\phi d(\cL_t^*\mu) = \int\cL_t\phi d\mu,\qquad \forall \phi\in\cC_b(\julia(\Fc)).
\]
Set $\widehat{\cL_t}=\alpha_t^{-1}\cL_t$ and let $\widehat{\cL_t^*}$ denote its dual. 
\begin{proposition}[Proposition 4, \cite{EI2023}]\label{prop_press}
Let $t>1$, the following properties hold: 
\begin{itemize}
\item[(i)] $\cL_t^*m_t=\alpha_t m_t$ and $\widehat{\cL_t^*}m_t=m_t$. 
\item[(ii)] $\sup_{n\geq1}\{\|\widehat{\cL_t^n}\mathds 1\|_\infty\}<+\infty$. 
\item[(iii)] $P(t)=\underline{P}(t)=\overline{P}(t)=\log\alpha_t$.  
\end{itemize}
\end{proposition}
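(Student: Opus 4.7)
My plan is to establish (i), (ii), (iii) in that order, since (ii) relies on (i) for the $L^1(m_t)$-normalisation and (iii) is a direct consequence of (i) and (ii). For (i), I would take a countable Borel partition $\{A_i\}_i$ of $\cJ(F_c)$ into pieces on each of which $F_c$ is injective; such a partition exists because, on the Julia set, $F_c$ has no critical points (all critical points of $f_c$ project to $\log\ell$, which lies in the attracting basin), so $F_c$ is locally a covering. For any test function $\phi\in\cC_b$, applying the $(t,\alpha_t)$-conformality relation on each $A_i$ gives
\begin{align*}
\int \cL_t\phi\,dm_t
&= \sum_i \int_{F_c(A_i)} |F_c'((F_c|_{A_i})^{-1}(z))|^{-t}\,\phi((F_c|_{A_i})^{-1}(z))\,dm_t(z) \\
&= \sum_i \int_{A_i} \alpha_t\,\phi\,dm_t \;=\; \alpha_t\int\phi\,dm_t,
\end{align*}
so $\cL_t^* m_t = \alpha_t m_t$; dividing by $\alpha_t$ yields $\widehat{\cL_t^*}m_t = m_t$.

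For (ii), taking $\phi=\mathds 1$ in (i) and iterating gives $\int \widehat{\cL_t^n}\mathds 1\,dm_t = 1$ for every $n\geq 1$. To promote this $L^1$-normalisation to a uniform $L^\infty$-bound I would invoke Koebe bounded distortion: since $\dist(\cP(f_c),\cJ(f_c))>0$, there exists $r_0>0$ such that for every $z\in\cJ(F_c)$, each holomorphic inverse branch of $F_c^n$ extends univalently to $B(z,2r_0)$. Koebe's theorem then produces a constant $K\geq 1$, independent of $n$, with
\[
K^{-t}\,\widehat{\cL_t^n}\mathds 1(z')\;\le\;\widehat{\cL_t^n}\mathds 1(z)\;\le\;K^{t}\,\widehat{\cL_t^n}\mathds 1(z')
\]
for any $z,z'\in\cJ(F_c)\cap B(w,r_0)$ with $w\in\cJ(F_c)$. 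Since $\cL_t\mathds 1(z)\to 0$ as $\real(z)\to +\infty$ (Proposition 2), the dynamically relevant part of $\cJ(F_c)$ is effectively bounded, and a covering argument promotes the local Koebe bound to a uniform bound $\widehat{\cL_t^n}\mathds 1(z)\le C$ for every $n\geq 1$.

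Part (iii) follows quickly. The bound from (ii) gives $\cL_t^n\mathds 1(z)\le C\,\alpha_t^n$, hence $\overline P(t)\le\log\alpha_t$. Conversely, $\int\widehat{\cL_t^n}\mathds 1\,dm_t=1$ combined with the Koebe estimate prevents $\widehat{\cL_t^n}\mathds 1$ from collapsing to zero on the support of $m_t$, yielding a uniform lower bound $\widehat{\cL_t^n}\mathds 1\ge c>0$ on the non-escaping part of $\cJ(F_c)$; this gives $\underline P(t)\ge\log\alpha_t$, so all three quantities equal $\log\alpha_t$. The main obstacle I anticipate is in (ii): the Julia set inside the cylinder $Q$ is non-compact, so the step that converts a local Koebe estimate into a uniform global bound is delicate and depends critically on the asymptotic decay of $\cL_t\mathds 1$ towards $+\infty$ to tame the non-compact tail of $\cJ(F_c)$.
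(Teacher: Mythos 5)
The paper imports this proposition from the companion work \cite{EI2023} and does not reproduce a proof, so there is no in-paper argument to compare against; I evaluate your plan against the standard Kotus--Urba\'nski scheme that \cite{EI2023} follows.

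Part (i) is correct: the $(t,\alpha_t)$-conformality change of variables over a countable Borel partition of $\cJ(F_c)$ into injectivity pieces (available because $\crit(F_c)=\{[\log\ell]\}$ lies in the Fatou set) is exactly the right computation for $\cL_t^*m_t$.

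Part (ii) has a genuine gap. The Koebe-plus-$L^1(m_t)$-normalisation step does give, on a bounded strip $\cJ_M=\cJ(F_c)\cap\{\real z\le M\}$ covered by finitely many balls $B_j$ with $m_t(B_j)>0$ (which in turn requires $\operatorname{supp}m_t=\cJ(F_c)$, a consequence of conformality plus topological exactness that you should flag), a bound $\widehat{\cL_t^n}\mathds 1\le K\max_j m_t(B_j)^{-1}=:C_0$ on $\cJ_M$, uniform in $n$. But ``the dynamically relevant part is effectively bounded'' is a heuristic, not an estimate, and it leaves the unbounded tail $\{\real z>M\}$ entirely uncontrolled. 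What actually closes the argument is: choose $M$ large enough that $\widehat{\cL_t}\mathds 1(z)\le 1/2$ for $\real z>M$, set $C=\max(C_0,\|\widehat{\cL_t}\mathds 1\|_\infty)$, and prove $\|\widehat{\cL_t^n}\mathds 1\|_\infty\le C$ by induction on $n$: for $\real z>M$, write $\widehat{\cL_t^n}\mathds 1(z)=\widehat{\cL_t}(\widehat{\cL_t^{n-1}}\mathds 1)(z)$ and split the sum over $F_c^{-1}(z)$ into preimages lying in $\cJ_M$ (bounded by $C_0$) and the rest (bounded by the inductive hypothesis $C$), obtaining $\widehat{\cL_t^n}\mathds 1(z)\le(C_0+C)\widehat{\cL_t}\mathds 1(z)\le (C_0+C)/2\le C$. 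Without this induction the $L^\infty$-bound does not follow from what you wrote.

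Part (iii): $\overline P(t)\le\log\alpha_t$ is fine. The stated mechanism for the lower bound, however, is wrong: a uniform lower bound $\widehat{\cL_t^n}\mathds 1\ge c>0$ on $\cJ_r(F_c)$ cannot hold, because $\widehat{\cL_t^n}\mathds 1(z)=\widehat{\cL_t}(\widehat{\cL_t^{n-1}}\mathds 1)(z)\le C\,\widehat{\cL_t}\mathds 1(z)\to 0$ as $\real z\to+\infty$, and $\cJ_r(F_c)$ is unbounded (preimages of core points can have arbitrarily large real part). The correct route uses Proposition 2, which already asserts $\underline P(t,z)$ is independent of $z$, so a lower bound at a \emph{single} base point $z_0$ suffices. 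To get it, combine the $L^\infty$-bound with Markov's inequality and tightness of $m_t$ to find, for each $n$, a ball $B_{j(n)}$ in a fixed finite cover of a compact set of large $m_t$-measure on which $\widehat{\cL_t^n}\mathds 1\ge(2K k)^{-1}$; then pull this back to $z_0$ using topological exactness of $F_c$ on $\cJ(F_c)$ (a fixed $N$ with $B_j\cap F_c^{-N}(z_0)\neq\emptyset$ for all $j$) to deduce $\widehat{\cL_t^{n+N}}\mathds 1(z_0)\ge c_0>0$ for all $n$. Some form of transitivity or exactness is indispensable here, and your plan never invokes it.
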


The commom value $P(t)$ in $(iii)$ above is called the \emph{topological pressure} of $F$. 

\begin{lemma}[Lemma 3, \cite{EI2023}]
If $\nu$ is a $(t,\beta^j)$-conformal measure for $F^j$ with $t>1$, then there exists $M>0$ such that 
\[
\liminf_{n\to\infty}\real(\Fc^{nj}(x))\leq M, 
\]
for $\nu$-a.e. $x\in\cJ(\Fc)$. In particular, $\nu(\esc(\Fc))=0$ so $\nu(\cJ_r(\Fc))=1$. 
\end{lemma}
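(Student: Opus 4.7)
The plan is to reduce the lemma to a Borel--Cantelli-type conclusion by establishing geometric decay for certain nested ``tail'' sets. Set
\begin{equation*}
R_M := \{z \in \cJ(\Fc) : \real(z) > M\} \qquad \text{and} \qquad E_{n,M} := \Fc^{-jn}(R_M),
\end{equation*}
so that what I want is $\nu(\liminf_n E_{n,M}) = 0$ for some $M$: this is exactly the statement that $\liminf_n \real(\Fc^{jn}(x)) \le M$ for $\nu$-a.e.\ $x$, and it forces $\nu(\esc(\Fc)) = 0$ since every escaping point eventually lies in $E_{n,M}$, hence in $\liminf_n E_{n,M}$.

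The first step is to push the conformality relation through iterates: using the chain rule on a partition of $\cJ(\Fc)$ into injectivity domains of $\Fc^{jn}$, the measure $\nu$ is $(t,\beta^{jn})$-conformal for $\Fc^{jn}$, which yields the useful identity
\begin{equation*}
\nu\bigl(\Fc^{-jn}(B)\bigr) \;=\; \beta^{-jn}\int_B \cL_t^{jn}\mathds 1\,d\nu
\end{equation*}
for every Borel $B \subset \cJ(\Fc)$. The central estimate is that on $R_M$ the function $\cL_t^j\mathds 1$ is small: writing $\cL_t^j = \cL_t \circ \cL_t^{j-1}$ and using Proposition~\ref{prop_press}(ii) to bound $\cL_t^{j-1}\mathds 1 \le K\alpha_t^{j-1}$, one finds
\begin{equation*}
\cL_t^j\mathds 1(y) \;=\; \sum_{x\in\Fc^{-1}(y)} |\Fc'(x)|^{-t}\,\cL_t^{j-1}\mathds 1(x) \;\le\; K\alpha_t^{j-1}\,\cL_t\mathds 1(y) \;\le\; K\alpha_t^{j-1}\epsilon(M)
\end{equation*}
for every $y \in R_M$, where $\epsilon(M) := \sup_{\real(z) > M}\cL_t\mathds 1(z) \to 0$ as $M \to \infty$ by the last bullet of the proposition on the topological pressure.

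Next, introduce the nested sets $D_n := \bigcap_{k=0}^n E_{k,M}$. Since $D_n \subset R_M$ and $D_{n+1} = R_M \cap \Fc^{-j}(D_n)$, combining the conformality identity with the central estimate yields
\begin{equation*}
\nu(D_{n+1}) \;\le\; \nu\bigl(\Fc^{-j}(D_n)\bigr) \;=\; \beta^{-j}\int_{D_n}\cL_t^j\mathds 1\,d\nu \;\le\; \theta\,\nu(D_n), \qquad \theta := \beta^{-j} K\alpha_t^{j-1}\epsilon(M).
\end{equation*}
Choosing $M$ large enough so that $\theta < 1$ (possible because $\beta > 0$ and $\epsilon(M) \to 0$), one gets $\nu(D_n) \le \theta^n$ and hence $\nu(\bigcap_{k\ge 0} E_{k,M}) = 0$. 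Applying the conformality identity once more with $A := \bigcap_{k\ge 0} E_{k,M}$ (a $\nu$-null set on which the bounded function $\cL_t^{jN}\mathds 1$ integrates to zero) gives $\nu(\bigcap_{k\ge N} E_{k,M}) = \nu(\Fc^{-jN}(A)) = 0$ for every $N$, so
\begin{equation*}
\nu(\liminf_n E_{n,M}) \;\le\; \sum_{N\ge 0}\nu\Bigl(\bigcap_{k\ge N} E_{k,M}\Bigr) \;=\; 0.
\end{equation*}
This proves the $\liminf$ statement and, since $\esc(\Fc) \subset \liminf_n E_{n,M}$, gives $\nu(\esc(\Fc)) = 0$ and $\nu(\cJ_r(\Fc)) = 1$.

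The main obstacle is the central estimate, which requires simultaneously the decay of $\cL_t\mathds 1$ at real infinity (reflecting the asymptotic behaviour $\Fc(z) \sim -e^z$) and the uniform boundedness of the normalized iterates $\widehat{\cL_t}^n\mathds 1$; without the latter one could not dominate $\cL_t^{j-1}\mathds 1$ uniformly at the preimages of an arbitrary point of $R_M$, and the recursion producing the factor $\theta$ would collapse.
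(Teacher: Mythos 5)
Your argument is correct and is essentially the standard proof (as in [EI2023], following [KU05]): conformality converts tail probabilities into transfer-operator integrals via $\nu(F^{-j}(B))=\beta^{-j}\int_B\cL_t^{j}\mathds 1\,d\nu$, the decay $\lim_{\real(z)\to+\infty}\cL_t\mathds 1(z)=0$ from Proposition~\ref{prop_press}'s companion result supplies a contraction factor $\theta<1$ for the nested sets $D_n$, and one more application of conformality upgrades $\nu\bigl(\bigcap_{k\ge 0}E_{k,M}\bigr)=0$ to $\nu(\liminf_n E_{n,M})=0$. A small remark: your closing comment overstates the role of Proposition~\ref{prop_press}(ii) --- the cruder bound $\cL_t^{j-1}\mathds 1\le\|\cL_t\mathds 1\|_\infty^{j-1}$, available from the second bullet of the pressure proposition, already dominates $\cL_t^{j-1}\mathds 1$ uniformly (with $j$ fixed this is harmless since $M$ may be taken as large as needed), so the uniform boundedness of the normalized iterates is a refinement rather than a prerequisite for the recursion.
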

The above result confirms that $\cJ_r$ is the right subset on $\cJ(\Fc)$ to be geometrically studied. As a first consequence we have. 

\begin{theorem}[Theorem 4, \cite{EI2023}]
For $t>1$, the probability measure $m_t$ is the unique $(t,e^{P(t)})$-conformal measure for $\Fc$. Moreover, it is ergodic with respect each iterate $\Fc^j$. 
\end{theorem}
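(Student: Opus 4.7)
The plan is to combine bounded distortion for inverse branches of $F_c^n$ with the geometric information furnished by the preceding Lemma, which localises every $(t,e^{P(t)})$-conformal measure on the radial Julia set $\cJ_r(F_c)$; this will yield uniqueness and ergodicity of every iterate simultaneously. Since $\dist(\cP(f_c),\julia(f_c))>0$, there exists a universal radius $r_0>0$ such that for every $z\in\julia(F_c)$ and every $n\geq 1$ each inverse branch of $F_c^n$ defined near $z$ extends univalently to $B(z,2r_0)$ and, by Koebe's theorem, has distortion bounded by a universal constant $K_0$ on $B(z,r_0)$. For any $(t,e^{P(t)})$-conformal probability measure $\nu$, the preceding Lemma gives $\nu(\cJ_r(F_c))=1$ and, for $\nu$-a.e.\ $x$, a sequence $n_k\to\infty$ with $F_c^{n_k}(x)$ contained in a fixed compact set $\Omega\subset Q$. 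Letting $\phi_{n_k}$ denote the inverse branch of $F_c^{n_k}$ fixing $x$ and $B_k:=\phi_{n_k}(B(F_c^{n_k}(x),r_0))$, iterating the conformality of $\nu$ together with the distortion bound yields
\[
\nu(B_k)\;\asymp\;e^{-P(t)n_k}\,|(F_c^{n_k})'(x)|^{-t}\,\nu\bigl(B(F_c^{n_k}(x),r_0)\bigr),
\]
and the identical estimate holds for $m_t$ with the same implicit constants.

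For uniqueness, the key point is that $\nu(B(F_c^{n_k}(x),r_0))$ and $m_t(B(F_c^{n_k}(x),r_0))$ are uniformly bounded above and below by positive constants depending only on $\Omega$ and $t$. The upper bound follows from $\sup_n\|\widehat{\cL_t^n}\mathds{1}\|_\infty<\infty$ of Proposition \ref{prop_press} applied via $\cL_t^*\nu=e^{P(t)}\nu$; the lower bound comes from a backward-orbit spreading argument using conformality together with the separation from $\cP(f_c)$. Dividing the two asymptotic identities, $\nu(B_k)/m_t(B_k)$ is bounded between two positive constants as $k\to\infty$. A Vitali-type differentiation argument then furnishes a bounded Radon-Nikodym derivative $\psi:=d\nu/dm_t\in L^\infty(m_t)$, and both measures being $(t,e^{P(t)})$-conformal forces $\psi\circ F_c=\psi$ almost surely, via the change-of-variables $|F_c'|^t\,dm_t=e^{-P(t)}\,d(m_t\circ F_c)$.

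Ergodicity of each iterate $F_c^j$ is obtained by the same geometric comparison, applied to $m_t$ alone. Given an $F_c^j$-invariant set $A$ with $m_t(A)>0$, pick a density point $x\in A\cap\cJ_r(F_c)$ and restrict the sequence $n_k$ to multiples of $j$; the invariance $F_c^{-n_k}(A)=A$ yields $F_c^{n_k}(A\cap B_k)=A\cap B(F_c^{n_k}(x),r_0)$, so the distortion estimate transports $m_t(A\cap B_k)/m_t(B_k)\to 1$ into density one of $A$ in $B(F_c^{n_k}(x),r_0)$. Passing to a subsequence with $F_c^{n_k}(x)\to y\in\Omega$ gives $m_t\bigl(B(y,r_0/2)\setminus A\bigr)=0$; the topological transitivity of $F_c$ on $\julia(F_c)$ (density of repelling periodic points plus Montel's theorem) combined with the quasi-invariance $m_t\circ F_c^{-1}\ll m_t$ spreads this to $m_t(A)=1$. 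In particular the $F_c$-invariant density $\psi$ from the previous paragraph is $m_t$-a.e.\ constant, and since $\nu$ and $m_t$ are both probabilities, $\psi\equiv 1$, so $\nu=m_t$.

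The main obstacle I expect is the uniform lower bound on $\nu(B(F_c^{n_k}(x),r_0))$ in the uniqueness step: one has to rule out the possibility that $F_c^{n_k}(x)$ accumulates on a point carrying zero $\nu$-mass in every compact neighbourhood. This is handled by the combination of conformality, which spreads any positive mass under backward iteration, with the separation of $\cP(f_c)$ from $\julia(F_c)$, which ensures that the inverse orbit of any ball in $\Omega\cap\julia(F_c)$ accumulates densely on every compact piece of the Julia set. A secondary subtlety is that $\julia(F_c)$ is non-compact, but the quasi-invariance of $m_t$ together with transitivity on $\cJ_r(F_c)$ suffices for the spreading argument that completes the ergodicity step.
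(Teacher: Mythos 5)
The present paper quotes this as Theorem~4 of \cite{EI2023} without reproducing a proof, but your argument is exactly the Kotus--Urba\'nski template that \cite{EI2023} explicitly follows: Koebe distortion for inverse branches off $\cP(F_c)$, the recurrence lemma to localise any $(t,e^{P(t)})$-conformal $\nu$ on $\julia_r(F_c)$, comparison of $\nu$ with $m_t$ on shrinking pullback balls to produce a bounded $F_c$-invariant Radon--Nikodym density for uniqueness, and a density-point plus topological-exactness argument for ergodicity of each iterate $F_c^j$. The one genuine subtlety you flag --- a uniform lower bound for $\nu\bigl(B(y,r_0)\bigr)$ over $y$ in a compact $\Omega$ --- is handled in the standard way: conformality makes $\supp\nu$ completely invariant, hence $\supp\nu=\julia(F_c)$, and then compactness of $\Omega$ yields the uniform positivity.
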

Finally, in order to state the Bowen's formula for $\cJ_r$, we need to define the bounded variation Banach space on which our transfer operator will be acting. 

For a fixed $\alpha\in(0,1]$ and $g\in\cC_b$ we define the $\alpha$-variation of $g$ given by 
\[
v_\alpha(g)=\inf\{L\geq0:|g(y)-g(x)|\leq L|y-x|^\alpha,\ x,y\in\cJ(\Fc),\ \text{ with }|y-x|\leq\delta\},
\]
where $\delta=1/2\min\{1/2,\dist(\cP(\Fc),\cJ(\Fc))\}>0$, and set 
\[
\|g\|_\alpha = v_\alpha(g) + \|g\|_\infty.
\]
It follows that the normed space 
\[
(\hh,\|\cdot\|_\alpha):=\{g\in\cC_b:\|g\|<+\infty\}
\]
is a Banach space densely contained in $\cC_b$ with respect to the $\|\cdot\|_\infty$-norm. We have the following result, when restrict the transfer operator to the Banach space $\hh$. 

\begin{theorem}[Theorem 5, \cite{EI2023}]\label{theo5.4}
For $t>1$, we have the following: 
\begin{itemize}
\item[(a)] The number 1 is a simple isolated eigenvalue of $\widehat{\cL_t}:\hh\to\hh$. 
\item[(b)] The eigenspace of the eigenvalue 1 is generated by nowhere vanishing function $\psi_t\in\hh$ such that $\int\psi_t dm_t=1$ and 
$$\lim_{\real(z)\to +\infty}\psi_t(z)=0.$$
\item[(c)] The number 1 is the only eigenvalue of modulus 1. 
\end{itemize}
\end{theorem}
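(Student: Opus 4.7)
The overall plan is to apply an Ionescu-Tulcea-Marinescu (Hennion) quasi-compactness theorem to $\widehat{\cL_t}$ acting on $\hh$, and then use a positivity/ergodicity argument to pin down the peripheral spectrum. The key inputs from the earlier sections are the uniform expansion $|(\Fc^n)'|\geq L\kappa^n$ on $\julia(\Fc)$ from Proposition \ref{prop1}, the positive separation $\dist(\cP(\Fc),\julia(\Fc))>0$ guaranteeing univalent inverse branches of $\Fc^n$ on balls of uniform radius $\delta$, the uniform bound $\sup_{n\geq1}\|\widehat{\cL_t}^n\mathds 1\|_\infty<+\infty$ from Proposition \ref{prop_press}(ii), and the decay $\lim_{\real(z)\to+\infty}\cL_t\mathds 1(z)=0$ at the cylinder puncture.

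First, I would derive a Lasota-Yorke type inequality of the form
\[
\|\widehat{\cL_t}^n g\|_\alpha \le A\,\kappa^{-\alpha n}\|g\|_\alpha + B\|g\|_\infty,\qquad g\in\hh,\ n\ge 1,
\]
by writing the difference $\widehat{\cL_t}^n g(y)-\widehat{\cL_t}^n g(y')$ as a sum over matched pairs of inverse branches of $\Fc^n$, applying a Koebe/bounded distortion estimate on discs of radius $\delta$ to extract a factor of $|(\Fc^n)'(x)|^{-\alpha}\le L^{-\alpha}\kappa^{-\alpha n}$ multiplying $v_\alpha(g)$, and absorbing the remainder into $\|g\|_\infty\cdot\|\widehat{\cL_t}^n\mathds 1\|_\infty$. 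Combined with compactness of the embedding $\hh\hookrightarrow\cC_b$ on bounded regions of $Q$ (Arzelà-Ascoli, together with the tail estimate to truncate $\julia(\Fc)$ up to arbitrarily small error), Hennion's theorem yields quasi-compactness of $\widehat{\cL_t}$ on $\hh$ with essential spectral radius at most $\kappa^{-\alpha}<1$. Since $\widehat{\cL_t^*}m_t=m_t$ by Proposition \ref{prop_press}(i), the value $1$ lies in the spectrum, and being peripheral it is an isolated eigenvalue of finite multiplicity, which handles the isolation portion of (a).

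Next, I would construct the eigenfunction $\psi_t$ of (b) as the Cesàro limit
\[
\psi_t:=\lim_{N\to\infty}\frac{1}{N}\sum_{n=0}^{N-1}\widehat{\cL_t}^n\mathds 1,
\]
whose convergence in $\hh$ follows from quasi-compactness plus Proposition \ref{prop_press}(ii), and which satisfies $\int\psi_t\,dm_t=1$ because $\widehat{\cL_t^*}m_t=m_t$. Positivity of $\widehat{\cL_t}$ together with density of backward orbits in $\julia(\Fc)$ forces $\psi_t>0$ throughout $\julia(\Fc)$: any zero of $\psi_t=\widehat{\cL_t}^n\psi_t$ would propagate to the entire preimage set $\Fc^{-n}(z)$, which is dense. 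The decay $\lim_{\real(z)\to+\infty}\psi_t(z)=0$ follows by iterating $\psi_t=\widehat{\cL_t}^n\psi_t$, using the uniform bound $\|\psi_t\|_\infty<\infty$, and invoking the tail estimate on $\cL_t\mathds 1$. For simplicity of $1$ (finishing (a)) and the exclusion of other modulus-one eigenvalues (giving (c)), I would use that $\mu_t:=\psi_t\,m_t$ is an $\Fc$-invariant probability which is ergodic with respect to every iterate $\Fc^j$, inherited from the corresponding statement for $m_t$ in Theorem 4 of \cite{EI2023}: a standard Perron-Frobenius argument then shows that any other unimodular eigenfunction $\phi$ produces an $\mu_t$-invariant function $\phi/\psi_t$ (via $|\phi|=c\psi_t$ by positivity), and iterate-ergodicity rules out rational rotations, while quasi-compactness of the peripheral spectrum rules out irrational ones.

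The principal technical obstacle is the non-compactness of $Q$, and hence of $\julia(\Fc)$, which complicates both the derivation of the Lasota-Yorke inequality and the verification of Arzelà-Ascoli compactness in Hennion's theorem: the sum defining $\widehat{\cL_t}^n$ has infinitely many terms accumulating towards $+\infty$ in the cylinder, and it is precisely the decay $\lim_{\real(z)\to+\infty}\cL_t\mathds 1(z)=0$, which itself depends on the assumption $t>1$, that supplies the uniform tail control needed to close the compactness argument.
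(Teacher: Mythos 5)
The paper imports this theorem from the companion work \cite{EI2023} without reproducing a proof, so there is no internal argument to compare against directly; but \cite{EI2023} explicitly follows Kotus--Urba\'nski \cite{KU05}, and your sketch is precisely the \cite{KU05} strategy: a Lasota--Yorke inequality on $\hh$ via Koebe distortion and the uniform expansion of Proposition \ref{prop1}, combined with a Hennion/Ionescu-Tulcea--Marinescu quasi-compactness argument whose compactness input is closed off by the tail estimate $\lim_{\real(z)\to+\infty}\cL_t\mathds 1(z)=0$; Ces\`aro averaging of $\widehat{\cL_t}^n\mathds 1$ using Proposition \ref{prop_press}(ii) to build $\psi_t$; density of preimages for strict positivity; and iterate-ergodicity of $m_t$ (Theorem 4 of \cite{EI2023}) for simplicity and the triviality of the peripheral spectrum. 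You also correctly flag that the only real technical pressure point is the non-compactness of $\cJ(\Fc)$ in $Q$ and that the tail decay, itself contingent on $t>1$, is what rescues both the precompactness needed for Hennion's theorem and the boundary behaviour of $\psi_t$.

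One place where your sketch is thinner than it should be is the exclusion of non-root-of-unity peripheral eigenvalues. You write that iterate-ergodicity handles rational rotations and that ``quasi-compactness of the peripheral spectrum rules out irrational ones''; the implicit mechanism is presumably that a peripheral eigenvalue $\lambda$ would generate the whole dense set $\{\lambda^j\}$ of peripheral eigenvalues via the eigenfunctions $\psi_t u^j$ with $u=\phi/\psi_t$. But $u$ need not belong to $\hh$: although $\phi,\psi_t\in\hh$ and $|u|\equiv 1$, the eigenfunction $\psi_t$ tends to $0$ as $\real(z)\to+\infty$ (this is part (b)!), so the quotient has no a priori H\"older control near the puncture, and the $\psi_t u^j$ need not lie in the Banach space on which quasi-compactness is asserted. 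A cleaner route, which closes the gap for all $|\lambda|=1$ at once (rational or not), is the equality case of the triangle inequality giving $u\circ\Fc=\lambda^{-1}u$ on $\cJ(\Fc)$ with $u$ continuous and unimodular; evaluating at a periodic point of period $p$ forces $\lambda^p=1$, and since $\Fc$ has periodic points whose periods have greatest common divisor $1$, this yields $\lambda=1$ directly. Alternatively, one can invoke weak mixing of $\mu_t$ if it has been established independently of the spectral gap, but one must be careful not to circle back to a mixing statement that is itself a corollary of part (c).
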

We then have the Bowen's formula for $\cJ_r$. 
\begin{theorem}[Theorem 8, \cite{EI2023}]\label{bowen}
The Hausdorff dimension of the radial Julia set $\cJ_r(\Fc)$ is the unique zero of the pressure function $t\mapsto P(t)$, $t>1$. 
\end{theorem}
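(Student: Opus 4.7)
The plan is to prove Bowen's formula in three stages: produce a unique $t_0>1$ with $P(t_0)=0$; upper-bound $\HD(\cJ_r(\Fc))$ by $t_0$ using the conformal measure $m_{t_0}$; lower-bound it by $t_0$ using the invariant equilibrium state $\mu_{t_0}=\psi_{t_0}m_{t_0}$.

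\textbf{Existence and uniqueness of $t_0$.} By Proposition \ref{prop_press}(iii), $P(t)=\log\alpha_t$ is continuous, convex, strictly decreasing on $(1,\infty)$, with $\lim_{t\to\infty}P(t)=-\infty$. Strict monotonicity gives uniqueness of a zero. For existence I would exhibit some $t_*>1$ with $P(t_*)>0$: fix a compact $K\subset Q\setminus\cP(\Fc)$ that meets $\cJ(\Fc)$, and count inverse branches of $\Fc^n$ landing in $K$. Combining Proposition \ref{prop1} with Koebe's distortion theorem on these branches yields a lower bound $\cL_{t_*}^n\mathds{1}(z)\geq C\rho^n$ with $\rho>1$ provided $t_*$ is chosen sufficiently close to $1$, which gives $P(t_*)\geq\log\rho>0$.

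\textbf{Upper bound $\HD(\cJ_r(\Fc))\leq t_0$.} Since $\alpha_{t_0}=e^{P(t_0)}=1$, the measure $m_{t_0}$ is $(t_0,1)$-conformal. Fix $z\in\cJ_r(\Fc)$: by definition of the radial Julia set there is a sequence $n_k\to\infty$ along which $\Fc^{n_k}(z)$ is trapped in a compact set $K$ disjoint from $\cP(\Fc)$. I would pull back a disk of fixed radius $r_0$ centred at $\Fc^{n_k}(z)$ through the univalent branch of $\Fc^{-n_k}$ sending $\Fc^{n_k}(z)$ to $z$; Proposition \ref{prop1} and Koebe's theorem produce a nearly round piece $U_k\ni z$ of diameter $r_k\asymp|(\Fc^{n_k})'(z)|^{-1}$ on which $\Fc^{n_k}$ has bounded distortion. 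Conformality then yields $m_{t_0}(U_k)\asymp r_k^{t_0}$, so $m_{t_0}(B(z,r_k))\geq cr_k^{t_0}$, and Frostman's mass distribution principle delivers $\HD(\cJ_r(\Fc))\leq t_0$.

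\textbf{Lower bound and main obstacle.} Theorem \ref{theo5.4} supplies a nowhere vanishing $\psi_{t_0}\in\hh$ with $\int\psi_{t_0}\,dm_{t_0}=1$, so $\mu_{t_0}:=\psi_{t_0}m_{t_0}$ is an $\Fc$-invariant Borel probability measure, ergodic by Theorem \ref{theo5.4}(c). Birkhoff's ergodic theorem applied to $\log|\Fc'|$ yields a finite positive Lyapunov exponent $\chi(\mu_{t_0})$, and repeating the Koebe pullback construction along $\mu_{t_0}$-typical return times to a fixed compact set off $\cP(\Fc)$ gives, for $\mu_{t_0}$-a.e. $z$, the asymptotic $\mu_{t_0}(B(z,r))\asymp r^{t_0}$ on a sequence $r\to 0$; hence $\HD(\mu_{t_0})=t_0$ and therefore $\HD(\cJ_r(\Fc))\geq t_0$. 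The genuine difficulty is justifying the Koebe pullbacks $\mu_{t_0}$-almost everywhere: typical orbits must stay away from $\cP(\Fc)$ often enough for distortion control, and must not escape to $+\infty$ too fast. The decay $\psi_{t_0}(z)\to 0$ as $\real(z)\to+\infty$ combined with the ergodicity of $\mu_{t_0}$ is what supplies the concentration and recurrence needed for a fully quantitative pointwise-dimension statement, and also guarantees $\mu_{t_0}(\Esc(\Fc))=0$, so that $\mu_{t_0}$ is genuinely carried by $\cJ_r(\Fc)$.
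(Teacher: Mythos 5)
This theorem is stated in the present paper only as a citation to Theorem~8 of~\cite{EI2023}; no proof appears here, so there is nothing in this document to compare your argument against line by line. Your three-stage architecture (unique zero of $P$; upper bound via the conformal measure; lower bound via the invariant equilibrium state $\mu_{t_0}=\psi_{t_0}m_{t_0}$) is the standard Kotus--Urba\'nski route to Bowen's formula for exponential-type maps, and it is almost certainly the strategy used in~\cite{EI2023}, so at the level of plan your proposal is on the right track.

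That said, two of your steps carry real gaps that a complete proof would have to close. First, the existence of a zero $t_0>1$ is not established by your sketch: Proposition~\ref{prop_press} gives strict decrease and $P(t)\to-\infty$, but you still need $P(t)>0$ for some $t>1$ (or $\lim_{t\to 1^+}P(t)\geq 0$), and your Koebe-plus-counting heuristic does not actually produce the exponential lower bound $\cL_{t_*}^n\mathds{1}(z)\geq C\rho^n$ with $\rho>1$ --- Proposition~\ref{prop1} bounds derivatives from \emph{below}, so it makes the weights $|(\Fc^n)'(x)|^{-t_*}$ small, and the competition between the (exponentially growing) number of preimages in a compact set and these decaying weights is exactly what has to be quantified. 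Second, in the lower bound you invoke Birkhoff to get a \emph{finite} Lyapunov exponent $\chi(\mu_{t_0})$, but for a transcendental map on the cylinder $\log|\Fc'|$ is unbounded, so the integrability $\int\log|\Fc'|\,d\mu_{t_0}<\infty$ is a theorem, not a triviality; it must be extracted from the summability and the tail behaviour of $m_{t_0}$ (or of $\psi_{t_0}$). You flag the recurrence issue, which is good, but the finite-exponent issue is the sharper obstruction and is not addressed. Also note that $\mu_{t_0}(\Esc(\Fc))=0$ already follows from the Lemma (Lemma~3 of~\cite{EI2023}) stating $m_t(\Esc(\Fc))=0$ for the conformal measure, since $\mu_{t_0}\ll m_{t_0}$; you do not need the decay of $\psi_{t_0}$ for that particular point.
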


\subsection{Analyticity of the Perron-Frobenius operator}
Recall that, since $\dist(\cP(\Fc),\cJ(\Fc))>0$, we have 
\[
\delta = \dfrac{1}{2}\min\{1/2,\dist(\ps(\Fc),\cJ(\Fc))\}>0. 
\]
For every $n\geq1$ and every $v\in\cJ(\Fc)$, let 
\[
(\Fc)_v^{-n}:B(\Fc^n(v),2\delta)\to Q,
\]
denote the holomorphic inverse branch of $\Fc^n$ defined on the open ball $B(\Fc^n(v),2\delta)$ and sending $F_c^n(v)$ to $v$. It follows from Proposition \ref{prop1} and Koebe's distorsion theorem that there exist constants $L>0$ and $0<\beta<1$ such that for every $n\geq1$ and everu $v\in\cJ(\Fc)$ and every $z\in B(\Fc^n(v),2\delta)$ we have 
\begin{equation}\label{eq1*}
|((\Fc)_v^n)'(z)|\leq L\beta^n.
\end{equation}

We say that a continuous function $\phi:\cJ(\Fc)\to\C$ is dynamically $\alpha$-Hölder if there exists $c_\phi>0$ such that 
\begin{equation}\label{Holder_alp}
|\phi_n((\Fc)_v^{-n}(y))-\phi_n((\Fc)_v^{-n}(x))|\leq c_\phi|\phi_n((\Fc)_v^{-n}(x))||x-y|^\alpha,
\end{equation}
for all $n\geq1$, all $x,y\in\cJ(\Fc)$ with $|x-y|\leq\delta$ and all $v\in \Fc^{-n}(x)$, where 
\[
\phi_n(z)=\phi(z)\phi(F(z))\cdots \phi(\Fc^{n-1}(z)).
\]
And we say that is summable if 
\[
\sup_{z\in\cJ(\Fc)}\left\{\sum_{v\in\Fc^{-1}(z)}\|\phi\circ(\Fc)_v^{-1}\|_\infty\right\}<+\infty.
\]
We end this section by collecting the main result of Section 8 in \cite{KU05}. We denote the class of $\alpha$-Hölder continuous summable functions on $\cJ(\Fc)$ by $\hsin_\alpha^s$. 

\begin{theorem}[Corollary 8.7, \cite{KU05}]\label{cor8.7}
Suppose that $G$ is an open connected subset of $\C^n$, $n\geq1$, and that $\phi_\sigma:\cJ(\Fc)\to\C$, $\sigma\in G$, is a family of mappings such that the following assumptions are satisfied. 
\begin{itemize}
\item[(a)] For every $\sigma\in G$, $\phi_\sigma$ is in $\hsin_\alpha^s$. 
\item[(b)] For every $\sigma\in G$, the function $\phi_\sigma$ is dynamically $\alpha$-Hölder. 
\item[(c)] The function $\sigma\mapsto\phi_\sigma\in\hh$ ($\sigma\in G$) is continuous. 
\item[(d)] The family $\{c_{\phi_\sigma}\}_{\sigma\in G}$ is bounded. 
\item[(e)] The function $\sigma\mapsto\phi_\sigma(z)\in\C$, $\sigma\ G$, is holomorphic for every $z\in\cJ(\Fc)$. 
\item[(f)] $\forall(\sigma_2\in G)\exists(r>0)\exists(\sigma_1\in G)$ such that 
\[
\sup\left\{\left|\dfrac{\phi_\sigma}{\phi_{\sigma_1}}\right|:\sigma\in\overline{B(\sigma_2,r)}\right\}<+\infty.
\] 
Then the function $\sigma\mapsto\cL_{\phi_{\sigma}}\in L(\hsin_\alpha)$, $\sigma\in G$, is holomorphic. 
\end{itemize}
\end{theorem}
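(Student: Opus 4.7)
The plan is to verify operator-valued holomorphy by the standard Banach-space criterion: produce a uniform bound for $\cL_{\phi_\sigma}$ acting on $\hsin_\alpha$ in a neighborhood of each $\sigma_0 \in G$, establish norm continuity in $\sigma$, and then invoke Morera's theorem along each complex coordinate together with Hartogs' theorem for Banach-valued maps.

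The first step, and the technical heart of the argument, is to check that each $\cL_{\phi_\sigma}$ is a bounded operator on $\hsin_\alpha$ with a bound that is locally uniform in $\sigma$. For the sup norm, summability (a) directly gives $\|\cL_{\phi_\sigma} g\|_\infty \leq \|g\|_\infty \sup_z \sum_{v \in \Fc^{-1}(z)} \|\phi_\sigma \circ (\Fc)_v^{-1}\|_\infty$. For the $\alpha$-variation, I would write
$$\cL_{\phi_\sigma} g(y) - \cL_{\phi_\sigma} g(x) = \sum_{v \in \Fc^{-1}(x)} \bigl[\phi_\sigma((\Fc)_v^{-1}(y)) g((\Fc)_v^{-1}(y)) - \phi_\sigma(v) g(v)\bigr]$$
and split each summand into a variation of $g$ (controlled by $v_\alpha(g)$ composed with the Koebe-type estimate (\ref{eq1*}) applied to the inverse branches $(\Fc)_v^{-1}$) plus a variation of $\phi_\sigma$ (controlled by the dynamic $\alpha$-Hölder property (\ref{Holder_alp}) with constant $c_{\phi_\sigma}$, bounded uniformly in $\sigma$ via (d)). Both pieces reassemble under the summability of (a), yielding $\|\cL_{\phi_\sigma} g\|_\alpha \leq K(\sigma)\|g\|_\alpha$. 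To make $K(\sigma)$ locally uniform, I would invoke (f): for any $\sigma_0 \in G$ pick $\sigma_1$ and $r>0$ so that $|\phi_\sigma/\phi_{\sigma_1}| \leq M$ on $\overline{B(\sigma_0,r)}$, and apply this termwise to obtain $\|\cL_{\phi_\sigma}\|_{L(\hsin_\alpha)} \leq M\cdot K(\sigma_1)$ for all $\sigma \in B(\sigma_0,r)$.

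With uniform boundedness in hand, norm continuity follows from (c): the same estimates applied to $\phi_\sigma - \phi_{\sigma_0}$ give $\|\cL_{\phi_\sigma} - \cL_{\phi_{\sigma_0}}\|_{L(\hsin_\alpha)} \to 0$ as $\sigma \to \sigma_0$. For holomorphy, I would restrict to any complex one-dimensional affine slice through $\sigma_0$, let $\gamma$ be a closed contour in that slice inside $B(\sigma_0,r)$, and evaluate pointwise
$$\int_\gamma (\cL_{\phi_\sigma} g)(z)\, d\sigma = \int_\gamma \sum_{v \in \Fc^{-1}(z)} \phi_\sigma(v) g(v)\, d\sigma.$$
The bound $|\phi_\sigma(v) g(v)| \leq M|\phi_{\sigma_1}(v)|\,\|g\|_\infty$ from (f), together with summability (a), lets dominated convergence interchange sum and integral. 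Each inner integral vanishes by pointwise holomorphy (e), so the expression is zero, and uniformity upgrades this to $\int_\gamma \cL_{\phi_\sigma}\, d\sigma = 0$ in $L(\hsin_\alpha)$. Morera then gives separate holomorphy, and Hartogs' theorem for Banach-space valued maps (valid once continuity is known) promotes this to joint holomorphy on $G$.

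The main obstacle I anticipate is exactly the first step: one must show that the $\alpha$-variation of $\cL_{\phi_\sigma}g$ is controlled by $\|g\|_\alpha$ with a constant that is simultaneously uniform in $\sigma$ on a neighborhood. This is the place where all the assumptions conspire — (a) for the summation to close up, (b)–(d) for the dynamic Hölder bound with a uniform constant, and (f) to transfer a reference bound at $\sigma_1$ to every nearby $\sigma$. The conditions (c) and (e) only enter after the family is known to live in a fixed ball of $L(\hsin_\alpha)$, after which the Morera–Hartogs argument is essentially formal.
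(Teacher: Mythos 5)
The paper does not give a proof of this result at all; it is imported verbatim as Corollary~8.7 of Kotus--Urba\'nski \cite{KU05}, so there is no in-paper argument to compare against. Your reconstruction follows the same outline as the proof in the cited source: establish that $\cL_{\phi_\sigma}$ is bounded on $\hsin_\alpha$ with a locally uniform bound, obtain operator-norm continuity, and then upgrade pointwise holomorphy to Banach-space-valued holomorphy via a Morera-type criterion and a Hartogs argument, with the evaluation functionals $T\mapsto (Tg)(z)$ serving as the separating family of functionals. Your identification of the $\alpha$-variation estimate as the technical core, and of the precise role each hypothesis plays (summability to close the sum over preimage branches, the dynamic H\"older bound with uniformly bounded constants from (b) and (d), the domination by $|\phi_{\sigma_1}|$ from (f)), is accurate and mirrors the structure of the original argument.

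One place that deserves more care than your sketch gives it is the norm-continuity step. You write that applying ``the same estimates'' to $\phi_\sigma - \phi_{\sigma_0}$ gives $\|\cL_{\phi_\sigma} - \cL_{\phi_{\sigma_0}}\|\to 0$, but (a) only asserts summability of each $\phi_\sigma$ individually, not of the difference, and (c) gives convergence $\phi_\sigma\to\phi_{\sigma_0}$ in the $\hh$-norm, which is a uniform statement and does not by itself control the infinite sum over preimage branches. The missing piece is exactly what you already used for local boundedness: hypothesis~(f) provides a summable majorant $M|\phi_{\sigma_1}\circ(\Fc)_v^{-1}|$ independent of $\sigma$ on a neighborhood, and combining this dominated-convergence control on the tail with the sup-norm smallness from (c) on any finite initial block yields the norm continuity. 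It is worth stating this explicitly, since without it the argument appears to rely on a summability of $\phi_\sigma-\phi_{\sigma_0}$ that was never assumed.
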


\section{Quasi-conformal Conjugacies of the Family $\{\fc\}$}\label{sec_qc}
The following continuity result on the Julia set will be useful in the analyticity of the Hausdorff dimension. 
\begin{lemma}
Let $c_0, c_n\in\cD(\ell,1)$ with $c_n\to c_0$ as $n\to\infty$. If $z_n\in \cJ(F_{c_n})$ for all $n\geq1$, and if $\lim_{n\to\infty} z_n=z_0$ for some $z_0\in\C$, then $z\in\cJ(F_{c_0})$. 
\end{lemma}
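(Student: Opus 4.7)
My plan is to argue by contradiction: assuming $z_0 \in \cF(F_{c_0})$, I will use the explicit description of the Fatou components in Theorem \ref{Fatou_set} to produce, for all large $n$, an iterate $F_{c_n}^m(z_n)$ lying in $\cF(F_{c_n})$, which contradicts $z_n \in \cJ(F_{c_n})$. Two continuity facts are used throughout: the map $(c,z) \mapsto F_c(z) = \ell z + c - (\ell-1)\log c - e^z$ is jointly continuous on $\cD(\ell,1) \times Q$, so each iterate $F_c^m$ is jointly continuous on compacts and $F_{c_n}^m(z_n) \to F_{c_0}^m(z_0)$ for every fixed $m$; and the fixed point $\log c$ varies continuously with $c$. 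Descending Theorem \ref{Fatou_set} to the cylinder identifies the wandering translates $\cU_0 + 2\pi i k$ with $\cU_0$, so $\cF(F_{c_0})$ is exhausted by the grand orbits of the Baker component $\Pi(\cU_\ell)$ and the attracting component $\Pi(\cU_0)$. Hence there exists $m \geq 0$ with $w_0 := F_{c_0}^m(z_0)$ lying either in the Baker domain of $F_{c_0}$ or in the immediate basin of $\log c_0$.

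In the first case, iterates inside the Baker domain converge to $-\infty$ in real part, so after enlarging $m$ I may assume $\real(w_0) < -2\ell - 1$. Theorem \ref{Fatou_set}, applied uniformly over $c \in \cD(\ell,1)$, places $\{\real(z) < -2\ell\}$ inside the Baker domain of every $F_c$; by joint continuity, $\real(F_{c_n}^m(z_n)) < -2\ell$ for all large $n$, so $F_{c_n}^m(z_n) \in \cF(F_{c_n})$ and therefore $z_n \in \cF(F_{c_n})$, the required contradiction.

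In the second case, $F_{c_0}^j(w_0) \to \log c_0$ with multiplier $|\ell - c_0| < 1$. Continuity of $F_c'$ and of $\log c$ allows me to fix $\delta > 0$, $r \in (0,1)$, and a neighborhood $N$ of $c_0$ such that, for every $c \in N$, one has $\log c \in B(\log c_0, \delta/2)$ and $|F_c'| \leq r$ throughout $B(\log c_0, \delta)$. Combined with $F_c(\log c) = \log c$, these estimates make $B(\log c_0, \delta)$ forward invariant and contracting under each such $F_c$, hence contained in $\cF(F_c)$. Enlarging $m$ so that $F_{c_0}^m(z_0) \in B(\log c_0, \delta/2)$, joint continuity yields $F_{c_n}^m(z_n) \in B(\log c_0, \delta)$ for large $n$, producing the same contradiction. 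The only delicate step is this uniform-in-$c$ attracting disk: one must ensure a single ball around $\log c_0$ is invariant and contracting for all nearby $F_c$, not just for $F_{c_0}$; the Baker case is essentially automatic because Theorem \ref{Fatou_set} already provides the common trapping region $\{\real(z) < -2\ell\}$.
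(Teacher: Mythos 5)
Your direct contradiction argument is a genuinely different route from the paper's. The paper disposes of the lemma in one stroke by invoking \cite{EM2021} to conclude that $c \mapsto \cJ(F_c)$ is Hausdorff-continuous on $\cD(\ell,1)$. Your version trades that external citation for a self-contained argument built directly on the component description of $\cF(F_c)$ in Theorem~\ref{Fatou_set} and joint continuity of $(c,z)\mapsto F_c^m(z)$. What the paper's route buys is brevity and the stronger two-sided continuity statement; what yours buys is transparency and independence from \cite{EM2021}. Your Baker-domain case is clean: the half-plane $\{\real z < -2\ell\}$ is a trapping region common to every $F_c$ with $c\in\cD(\ell,1)$, so once $\real\bigl(F_{c_0}^m(z_0)\bigr)<-2\ell-1$ the contradiction follows from convergence $F_{c_n}^m(z_n)\to F_{c_0}^m(z_0)$.

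The attracting-basin case, however, has a small quantitative gap. You choose $\delta>0$, $r\in(0,1)$ and a neighbourhood $N$ of $c_0$ with $|F_c'|\le r$ on $B(\log c_0,\delta)$ and $\log c\in B(\log c_0,\delta/2)$ for $c\in N$, and then assert that $B(\log c_0,\delta)$ is $F_c$-forward invariant. Tracking the estimate: for $z\in B(\log c_0,\delta)$ one has $|z-\log c|<3\delta/2$, hence $|F_c(z)-\log c|<3r\delta/2$ and $|F_c(z)-\log c_0|< 3r\delta/2+\delta/2$, which is below $\delta$ only when $r<1/3$. Since you must take $r>|\ell-c_0|$ to make the derivative bound hold near $\log c_0$, the argument as written fails whenever $|\ell-c_0|\ge 1/3$. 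The repair is easy and keeps the spirit of your plan: replace the fixed ball by the moving one. The ball $B(\log c,\delta/2)$ sits inside $B(\log c_0,\delta)$, and for $z\in B(\log c,\delta/2)$ we get $|F_c(z)-\log c|\le r|z-\log c|<r\delta/2<\delta/2$, so $B(\log c,\delta/2)$ is $F_c$-forward invariant and contained in $\cF(F_c)$ for every $c\in N$. Now enlarge $m$ so that $F_{c_0}^m(z_0)\in B(\log c_0,\delta/4)$; since $\log c_n\to\log c_0$ and $F_{c_n}^m(z_n)\to F_{c_0}^m(z_0)$, one has $F_{c_n}^m(z_n)\in B(\log c_n,\delta/2)$ for all large $n$, and the contradiction with $z_n\in\cJ(F_{c_n})$ follows.
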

\begin{proof}
From Theorem \ref{Fatou_set} we know that the Fatou set $\cF(\Fc)$ consists only of the Baker (strip) domain $\Pi(\cU_\ell)$, the attracting domain $\Pi(\cU_n)$ and all of their preimages for each $c\in\cD(\ell,1)$. Following ideas in \cite{EM2021} we conclude that the mapping $c\mapsto\cJ(\Fc)$, $c\in\cD(\ell,1)$ is continuous in the Hausdorff metric, which finishes the proof of the lemma. 
\end{proof}

We say that an entire function $f$ belongs to the class $S_q$ if the set $\text{sing}(f^{-1})$ contains at most $q$ points. We set $\cS=\bigcup_{q=1}^{\infty} S_q$, commonly call the \emph{Speiser class}, see \cite{el1992} for further references. 

Fix $g\in S_q$ and define $M_g\subset S_q$ as
$$
M_g = \{f\in S_q: \exists \varphi,\psi:\C\to\C\ \text{ such that } \psi\circ g=f\circ\varphi\}.
$$
Consider the multi-valued analytic function $\alpha_p:M_g\to\C$ satisfying the equation $f^p(\alpha)=\alpha$. Let $N_p$ denote the algebraic singularities (\cite[Theorem 2]{el1992}) of this function and set $N=\overline{\bigcup_{p=1}^\infty}N_p$ and $\Sigma=M\setminus N$. 

\begin{theorem}[Theorem 10, \cite{el1992}]\label{el92_theo}
Each $f\in M_g$ is structurally stable and the conjugating homeomorphisms can be chosen to be quasi-conformal. 
\end{theorem}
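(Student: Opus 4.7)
The plan is to upgrade the quasi-conformal topological equivalence encoded in the defining relation of $M_g$ into a genuine quasi-conformal conjugacy, via an iterated Thurston-style pullback. Fix $f \in M_g$ and a nearby $\tilde f \in M_g$; since both are equivalent to the common model $g \in S_q$, composing the two commutative squares through $g$ yields $K$-quasi-conformal homeomorphisms $\Phi_0,\Psi_0:\C\to\C$ satisfying $\Psi_0\circ f=\tilde f\circ\Phi_0$. I would normalize so that $\Phi_0$ and $\Psi_0$ agree on the finite set $\mathrm{sing}(f^{-1})$ (sending it bijectively to $\mathrm{sing}(\tilde f^{-1})$) and, more generally, on a suitable discrete set containing the post-singular data, and pin three points to avoid degeneration in subsequent normal-family limits.

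Next I would define inductively $\Phi_{n+1}:\C\to\C$ as the unique continuous lift of $\Phi_n$ through the dynamics, that is, the unique solution of
\[
\Phi_n\circ f=\tilde f\circ\Phi_{n+1}, \qquad \Phi_{n+1}|_{\mathrm{sing}(f^{-1})}=\Phi_n|_{\mathrm{sing}(f^{-1})}.
\]
Existence and uniqueness follow because $f,\tilde f$ restrict to holomorphic coverings over the complement of their singular values and because $\Phi_n$ sends $\mathrm{sing}(f^{-1})$ to $\mathrm{sing}(\tilde f^{-1})$ inductively. Since $f$ and $\tilde f$ are holomorphic, the Beltrami coefficients satisfy $\mu_{\Phi_{n+1}}=f^{*}\mu_{\Phi_n}$ almost everywhere, and holomorphic pullback preserves the essential supremum; thus $\|\mu_{\Phi_n}\|_\infty\le k=(K-1)/(K+1)<1$ for every $n$, and $\{\Phi_n\}$ is an equi-$K$-quasi-conformal family.

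Then I would show that $\{\Phi_n\}$ is locally uniformly Cauchy, so a limit $h=\lim_n\Phi_n$ exists and inherits $K$-quasi-conformality. On $\julia(f)$ this follows from the uniform expansion of $f$ (as in Proposition \ref{prop1}): a discrepancy $\varepsilon$ between $\Phi_n$ and $\Phi_{n+1}$ at $f^n(z)$ pulls back to a discrepancy of order $C\kappa^{-n}\varepsilon$ at $z$, forcing Cauchy convergence. On $\fatou(f)$ one uses that $f\in\ccS$ has no wandering domains (Eremenko--Lyubich / Goldberg--Keen) and, for $f\in\Sigma$, no parabolic cycles nor persistent critical relations, so every Fatou component eventually lands in a (super-)attracting basin whose Koenigs or Böttcher coordinate depends holomorphically on the parameter; the iterated lifts fit into this linearizing structure and converge. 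The limit $h$ satisfies $h\circ f=\tilde f\circ h$ by passage to the limit, and the three-point normalization guarantees $h$ is a homeomorphism, not merely a quasi-regular map.

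The main obstacle is precisely the convergence of the pullback iteration on the Fatou set, where expansion provides no help. This is where the hypothesis $f\in\Sigma=M_g\setminus N$ is indispensable: $N$ consists of the algebraic singularities of the multi-valued maps $\alpha_p$ solving $f^p(\alpha)=\alpha$, i.e. the loci where periodic cycles become parabolic or collide with critical orbits. Outside $N$ the attracting cycles and their linearizing coordinates depend holomorphically on the parameter, which is what pins the pullback iteration inside each basin; inside $N$ the combinatorics jump and the conjugacy genuinely fails to exist. Once this Fatou-side convergence is secured, the uniform bound $\|\mu_{\Phi_n}\|_\infty\le k<1$ automatically makes $h$ quasi-conformal, completing the structural-stability statement in the form asserted.
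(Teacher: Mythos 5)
The paper does not prove this statement at all: it is imported verbatim from Eremenko--Lyubich \cite{el1992}, where the proof is of Ma\~n\'e--Sad--Sullivan type. On $\Sigma$ the functions $\alpha_p$ (periodic points) and the singular values depend holomorphically on the parameter, so one obtains a holomorphic motion of a dense subset of $\julia(f)$ together with the singular orbits; the $\lambda$-lemma upgrades this to a quasi-conformal motion of its closure, which is then extended to a conjugacy of the plane. No expansion and no pullback iteration enter, and this route also delivers, essentially for free, the holomorphic dependence $c\mapsto h_c(z)$ that the present paper actually needs later (Theorem \ref{prop9.2}(a), used in Proposition \ref{prop9.5}); a limit of a pullback iteration does not obviously have that property.

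Your Thurston-style pullback is therefore a genuinely different route, and as written it has gaps that are fatal at the stated level of generality. First, the defining relation of $M_g$ only provides \emph{topological} equivalences $\varphi,\psi$; your starting point, a $K$-quasi-conformal pair $\Phi_0,\Psi_0$ agreeing on (and implicitly homotopic relative to) the post-singular data, is precisely the nontrivial content to be established, and it is delicate here because $\cP(f)$ is in general infinite and may accumulate on $\julia(f)$; likewise the monodromy condition needed to perform each lift $\tilde f\circ\Phi_{n+1}=\Phi_n\circ f$ is never verified beyond ``agree on a discrete set''. Second, your convergence argument on the Julia set invokes uniform expansion ``as in Proposition \ref{prop1}'', but that proposition concerns the specific E-hyperbolic family $F_c$; an arbitrary $f\in\Sigma$ (let alone $f\in M_g$, as the theorem is quoted) need not be expanding on its Julia set --- J-stability does not imply hyperbolicity, that is essentially the density-of-hyperbolicity problem --- so the contraction-of-discrepancies step has no basis in the generality required. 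Third, convergence of the lifts on the Fatou side is asserted rather than proved (and the exclusion of Siegel cycles for $f\in\Sigma$ deserves an argument, since $N$ is defined through collisions of periodic points, not through neutral multipliers). What your scheme could plausibly yield, after substantial work, is a quasi-conformal conjugacy between expanding members of a single hyperbolic component --- enough for the maps $G_c$, $c\in\cD(\ell,1)$, but not a proof of the quoted theorem, whose correct hypothesis, incidentally, is $f\in\Sigma$ rather than $f\in M_g$.
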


Now, the two covering maps $\Pi:\C\to Q=\C/\sim$ and the map $z\mapsto e^z\in\C^*=\C\setminus\{0\}$ induce a conformal homeomorphism $H:Q\to\C^*$ which extends to a conformal homeomorphism $H:Q\to\C^*$ sending $-\infty$ to $0$. Each map $G_c=H\circ\Fc\circ H^{-1}:\C\to\C$ is given by 
\begin{equation}\label{eq9.1}
G_c(z)=\dfrac{1}{c^{\ell-1}}z^\ell e^{c-z}.
\end{equation}

Assuming that $c\in\cD(\ell,1)$, the map contains exactly one super-attracting fixed point in $z_0=0$ and a (super-) attracting fixed point at $z=c$. $G_c$ has only two singularities at $z_0=0$ and $z=\ell$, but $z_0=0$ is a super-attracting fixed point for $G_c$. $z=c\in\cD(\ell,1)$ implies that the critical point $z=\ell$ belongs to the Fatou set of $G_c$, $\cF(G_c)$, and is attracted by the fixed point $z=c$ under forward iterates of $G_c$. 

In particular $G_c$ belongs to the so-called Speiser class $\cS$ as described above, and we have that $G_c\in M=M_{G_\ell}$. Moreover, $\{G_c\}\subset\Sigma$, $c\in\cD(\ell,1)$. Following Theorem \ref{el92_theo} we have
\begin{theorem}\label{prop9.2}
Fix $c_0\in\cD(\ell,1)$. Then for every $c\in\cD(\ell,1)$ there exists a quasi-conformal homeomorphism conjugating $G_c$ and $G_{c_0}$ (i.e. $G_c\circ H_c=H_c\circ G_{c_0}$, with $H_c$-q.c.). These conjugating homeomorphisms can be chosen so that the following properties are satisfied. 
\begin{itemize}
\item[(a)] For every $z\in\C$, the map $c\mapsto H_c(z)$ is holomorphic. 
\item[(b)] The mapping $(c,z)\mapsto H_c(z)$ is continuous. 
\item[(c)] The dilatation of the maps $H_c$ converge to 1 when $c\to c_0$. 
\end{itemize}
\end{theorem}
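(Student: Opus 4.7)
The plan is to combine Theorem \ref{el92_theo} with the holomorphic motions machinery (the Mane--Sad--Sullivan $\lambda$-lemma and Slodkowski's extension theorem) to produce a family of quasi-conformal conjugacies $\{H_c\}_{c\in\cD(\ell,1)}$ depending holomorphically on the parameter, with dilatations tending to $1$ at the base point $c_0$.

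First, I would verify the hypotheses of Theorem \ref{el92_theo}. As already noted just before the statement, for every $c\in\cD(\ell,1)$ the map $G_c$ has exactly two singular values: the super-attracting fixed point $z=0$, and the critical point $z=\ell$, which is attracted by the fixed point $z=c$. Both singular orbits therefore remain in the Fatou set for every admissible parameter, so $\{G_c\}_{c\in\cD(\ell,1)}\subset\Sigma\subset M_{G_\ell}$ and Theorem \ref{el92_theo} furnishes a quasi-conformal conjugacy between $G_{c_0}$ and each $G_c$. What remains is to select these conjugacies in a parameter-coherent way. To that end, I would build a holomorphic motion over $\cD(\ell,1)$ based at $c_0$ on the forward-invariant countable skeleton $A_0$ consisting of the two singular values $\{0,\ell\}$, the attracting fixed point $c$, and all repelling periodic orbits of $G_{c_0}$. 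On $A_0$ the assignment $z\mapsto h_c(z)$ is canonical and depends holomorphically on $c$ (repelling cycles vary holomorphically throughout $\Sigma$ by the implicit function theorem applied to $G_c^p(z)=z$), and by construction it is equivariant: $h_c\circ G_{c_0}=G_c\circ h_c$ on $A_0$.

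By the $\lambda$-lemma together with Slodkowski's theorem, this motion extends to a holomorphic motion $H:\cD(\ell,1)\times\C\to\C$ with $H_{c_0}=\id$, each slice $H_c$ a quasi-conformal homeomorphism of $\C$, and $(c,z)\mapsto H_c(z)$ jointly continuous; this gives (a) and (b). Equivariance passes from the dense skeleton $A_0$ to its closure $\cJ(G_{c_0})$ automatically. On each Fatou component, equivariance is imposed via the available normal forms: B\"ottcher coordinates near the super-attracting point $z=0$, linearizing coordinates near the attracting point $z=c$, and forward transport on wandering components. For (c), the standard dilatation estimate for holomorphic motions yields $K(H_c)\le \frac{1+\rho(c,c_0)}{1-\rho(c,c_0)}$, where $\rho$ denotes the hyperbolic distance on $\cD(\ell,1)$; since $H_{c_0}=\id$ has dilatation $1$, we conclude $K(H_c)\to 1$ as $c\to c_0$.

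The main obstacle I anticipate is that Slodkowski's extension is not automatically a dynamical conjugacy on the Fatou set -- it is merely a quasi-conformal motion -- so the conjugacy has to be built component by component using the different normal forms and then reconciled with the motion of the Julia set coming from the $\lambda$-lemma. A cleaner alternative is to work directly with the Beltrami differential $\mu_c=\partial_{\bar z}H_c/\partial_z H_c$: one verifies that it is $G_{c_0}$-invariant, depends holomorphically on $c$, and vanishes at $c=c_0$, and then recovers $H_c$ from the measurable Riemann mapping theorem (Ahlfors--Bers), which simultaneously supplies the holomorphic dependence, the joint continuity, and the dilatation estimate $K(H_c)\to 1$.
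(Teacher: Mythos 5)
The paper gives no proof of Theorem \ref{prop9.2}; it is stated as an immediate corollary of Theorem \ref{el92_theo} (Eremenko--Lyubich structural stability) after checking $\{G_c\}_{c\in\cD(\ell,1)}\subset\Sigma$. Your proposal fills in exactly the mechanism behind that citation — the dynamical holomorphic motion on a skeleton of periodic orbits and singular orbits, extended by the $\lambda$-lemma/Slodkowski — which is precisely the holomorphic-motions machinery used in [EL92] to establish structural stability, so you are on the same route as the paper, only more explicit. The three listed properties (a)--(c) are indeed standard outputs of this machinery: holomorphicity of $c\mapsto H_c(z)$ and joint continuity come with the extended motion, and the dilatation bound in terms of the hyperbolic metric of the parameter domain (based at $c_0$ where the motion is the identity) gives (c).

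Two small inaccuracies worth flagging. First, $\{0,\ell\}$ are the \emph{critical points} of $G_c$, not its singular values; the singular values are $0$ and $G_c(\ell)$, and the natural skeleton is the postsingular set $\{0\}\cup\{c\}\cup\{G_{c_0}^n(\ell):n\ge 0\}$ together with repelling cycles. (The paper makes the same conflation, so you inherited it.) Second, the dilatation estimate from Slodkowski's theorem is $K(H_c)\le e^{2\rho(c,c_0)}=\frac{1+\tanh\rho(c,c_0)}{1-\tanh\rho(c,c_0)}$, not $\frac{1+\rho}{1-\rho}$; the limit as $c\to c_0$ is $1$ either way, so (c) is unaffected. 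Your observation that the $\lambda$-lemma extension must still be reconciled with equivariance on the Fatou set is the right concern, and the component-by-component construction via B\"ottcher/linearizing coordinates is the standard fix. The Ahlfors--Bers alternative at the end is correct in spirit, but as phrased it is circular (the Beltrami differential of $H_c$ is not $G_{c_0}$-invariant unless $H_c$ is already a conjugacy); the usual version first builds a $G_{c_0}$-invariant Beltrami coefficient from the skeleton motion and then integrates it.
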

Since each map $F_c$ is conjugated with $G_{c_0}$ by the same conformal homeomorphism $H:Q\to\C^*$, we have the following extension. 
\begin{theorem}\label{prop9.3}
Fix $c_0\in\cD(\ell,1)$. Then for every $c\in\cD(\ell,1)$, there exists a quasi-conformal homeomorphism conjugating $F_c$ with $F_{c_0}$ (i.e. $F_c\circ h_c=h_c\circ F_{c_0}$, with $h_c$-q.c.). These conjugating homeomorphims can be chosen so that the following properties are satisfied. 
\begin{itemize}
\item[(a)] For every $z\in Q$, the map $c\mapsto h_c(z)$ is holomorphic. 
\item[(b)] The mapping $(c,z)\mapsto h_c(z)$ is continuous. 
\item[(c)] The dilatation of the maps $h_c$ converges to 1 when $c\to c_0$. 
\end{itemize}
\end{theorem}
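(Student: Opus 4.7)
The plan is to transfer the quasi-conformal conjugacies $H_c:\C\to\C$ furnished by Theorem \ref{prop9.2} down to the cylinder $Q$ by means of the fixed conformal isomorphism $H:Q\to\C^*$ introduced just above Theorem \ref{prop9.2}. Before transferring, I would first normalize the family $\{H_c\}$ so that every $H_c$ fixes the common super-attracting fixed point $0$ of all $G_c$; this is a natural normalization compatible with the Eremenko--Lyubich construction underlying Theorem \ref{el92_theo}. With this normalization each $H_c$ restricts to a quasi-conformal homeomorphism of $\C^*$, so I set
\[
h_c := H^{-1}\circ H_c\circ H : Q\to Q
\]
and check that it inherits all the required properties.

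Next I would verify the conjugation identity $F_c\circ h_c = h_c\circ F_{c_0}$ by a direct computation. The definition of $G_c$ gives $H\circ F_c = G_c\circ H$, equivalently $F_c = H^{-1}\circ G_c\circ H$, and Theorem \ref{prop9.2} gives $G_c\circ H_c = H_c\circ G_{c_0}$. Combining these two semiconjugacies,
\begin{align*}
F_c\circ h_c &= \bigl(H^{-1}\circ G_c\circ H\bigr)\circ\bigl(H^{-1}\circ H_c\circ H\bigr) \\
&= H^{-1}\circ G_c\circ H_c\circ H \\
&= H^{-1}\circ H_c\circ G_{c_0}\circ H \\
&= h_c\circ F_{c_0}.
\end{align*}

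The three listed properties would then be inherited from the corresponding properties of $H_c$, using that $H$ and $H^{-1}$ are conformal and independent of $c$. For (a), holomorphy of $c\mapsto h_c(z)=H^{-1}(H_c(H(z)))$ for each fixed $z\in Q$ follows from holomorphy of $c\mapsto H_c(H(z))$ by post-composition with the $c$-independent holomorphic map $H^{-1}$. Property (b) is the same argument with \emph{holomorphic} replaced by \emph{continuous}. For (c), since $H$ is conformal the Beltrami coefficient of $h_c$ is the pullback under $H$ of that of $H_c$, so $\|\mu_{h_c}\|_\infty=\|\mu_{H_c}\|_\infty$; hence the dilatation of $h_c$ equals that of $H_c$ and converges to $1$ as $c\to c_0$.

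The point that I expect to require the most care is the initial normalization ensuring $H_c(0)=0$, so that $H_c$ restricts to a self-homeomorphism of $\C^*$ without disturbing either the holomorphic dependence on $c$ or the dilatation estimate; once this is arranged, the remainder is a transparent transport of structure along the conformal covering $H:Q\to\C^*$.
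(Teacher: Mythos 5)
Your transport-of-structure argument via the fixed conformal isomorphism $H:Q\to\C^*$ is exactly the argument the paper leaves implicit in the one-line remark preceding the statement, so you have taken the same route. The only gloss worth adding is that the ``normalization'' $H_c(0)=0$ you flag as the delicate point is in fact automatic rather than a choice: $0$ is the unique super-attracting fixed point of every $G_c$ (the only critical points are $0$ and $\ell$, and only $0$ is fixed), and any topological conjugacy must send super-attracting fixed points to super-attracting fixed points of the same local degree, so every $H_c$ already fixes $0$ and restricts to a self-homeomorphism of $\C^*$.
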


The following result tell us that the expansiveness in Proposition \ref{prop1} is, in some sense, continuous. 

\begin{lemma}\label{lemma9.4}
For $c_0\in\cD(\ell,1)$ there exist constants $r>0$, $L>0$ and $\kappa>1$ such that 
\[
|(\Fc^n)'(z)|\geq L\kappa^n,
\]
for all $c\in \cD(c_0,r)$, all $z\in\cJ(\Fc)$ and all $n\geq1$. 
\end{lemma}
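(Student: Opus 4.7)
The plan is to combine the expansion bound for $F_{c_0}$ from Proposition \ref{prop1} with the joint holomorphy of $(c,z)\mapsto F_c^N(z)$, the Hausdorff continuity of the Julia sets supplied by the preceding lemma, and the exponential growth of $F_c'(z)=\ell-e^z$ as $\real(z)\to+\infty$. The idea is to split each orbit into blocks of length $N$ whose initial point lies in a compact ``core'' strip $K\subset Q$, where compactness plus joint continuity transports the $F_{c_0}$-expansion to nearby $F_c$, and single steps in the tail $\{\real(z)\gg 0\}$, where expansion is automatic and uniform in $c$.

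First, by Proposition \ref{prop1} we fix $N\geq 1$ with $|(F_{c_0}^N)'(w)|\geq 4$ for every $w\in\cJ(F_{c_0})$. Next choose $M_0>0$ so that $|\ell-e^z|\geq 4^{1/N}$ whenever $\real(z)\geq M_0$; this threshold is independent of $c$. Put
\[
K=\{w\in Q:-2\ell\leq\real(w)\leq M_0+1\},
\]
a compact subset of $Q$ that contains the ``bounded'' part of every $\cJ(F_c)$ for $c\in\cD(\ell,1)$ (recall $\cJ(F_c)\subset\{\real(z)\geq -2\ell\}$ by Theorem \ref{Fatou_set}). Since $(c,w)\mapsto (F_c^N)'(w)$ is jointly continuous on $\overline{\cD(c_0,1/2)}\times K$ and the Julia sets $\cJ(F_c)\cap K$ converge to $\cJ(F_{c_0})\cap K$ in Hausdorff distance as $c\to c_0$ (the preceding lemma), there exists $r>0$ such that
\[
|(F_c^N)'(w)|\geq 3\quad\text{for every }w\in K\cap\cJ(F_c),\ c\in\cD(c_0,r).
\]

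For such $c$, $z\in\cJ(F_c)$, and $n\geq 1$, we process the times $j=0,\dots,n-1$ greedily: if $F_c^j(z)\in K$, we jump to time $j+N$ using the $N$-block bound $|(F_c^N)'(F_c^j(z))|\geq 3$; otherwise $\real(F_c^j(z))>M_0$ and we advance by one step using $|F_c'(F_c^j(z))|\geq 4^{1/N}$. This covers all iterates except possibly a terminal fragment of length at most $N-1$ lying in $K$. Since the critical point $\log\ell\in Q$ lies in the Fatou set and stays uniformly away from $\cJ(F_c)$ for $c$ near $c_0$ (Theorem \ref{Fatou_set} combined with the q.c.~conjugacy of Theorem \ref{prop9.3}), we have $|F_c'|\geq\varepsilon_0>0$ on $K\cap\cJ(F_c)$, and the terminal fragment contributes at least $\varepsilon_0^{N-1}$. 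The chain rule then yields $|(F_c^n)'(z)|\geq L\kappa^n$ with $\kappa=3^{1/N}>1$ and $L=\varepsilon_0^{N-1}\kappa^{-(N-1)}$, uniformly in $c\in\cD(c_0,r)$.

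The main obstacle is securing uniformity of the three ingredients simultaneously: the $N$-step derivative bound on $K$, the single-step lower bound $|F_c'|\geq\varepsilon_0$ on $K\cap\cJ(F_c)$, and the Hausdorff convergence of Julia sets. Each is delicate because $Q$ is non-compact and $\cJ(F_c)$ has unbounded hairs extending to $+\infty$; all three ultimately rest on the persistence of the post-singular behavior inside the Fatou set across $\cD(\ell,1)$, as supplied by Theorem \ref{prop9.3}.
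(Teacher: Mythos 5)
Your argument is correct and follows the same strategy as the paper: decompose each orbit using a compact core strip $K$, where compactness, the (semi)continuity of $c\mapsto\cJ(F_c)$ from the preceding lemma, and joint continuity of $(c,z)\mapsto(F_c^N)'(z)$ transfer the $F_{c_0}$-expansion of Proposition \ref{prop1} to nearby $F_c$, while outside the strip $|F_c'|=|\ell-e^z|$ is automatically large and uniform in $c$; then assemble the estimate by the chain rule over a block decomposition. The paper organizes the blocks through a finite open cover $\cJ(F_{c_0})\subset A_1(c_0)\cup\dots\cup A_q(c_0)$ with $A_k(c)=\{|(F_c^k)'|>2\}$, transferred to nearby $c$, giving variable-length blocks $\le q$, whereas you fix a single $N$ and alternate $N$-blocks (inside $K$) with single steps (outside $K$); you also make explicit the uniformity in $c$ of the one-step lower bound $|F_c'|\ge\varepsilon_0$ on $K\cap\cJ(F_c)$, which the paper leaves implicit when introducing its constant $u$.
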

\begin{proof}
We consider the following notation. For $M>0$, 
\[
Q_M=\{z\in Q:-2\ell\leq\real(z)\leq M\}. 
\]
Now, for every $c\in\cD(\ell,1)$ and every $k\geq1$, set 
\[
A_k(c)=\{z\in\C:|(\Fc^k)'(z)|>2\}.
\]
Since $\fc'(z)=\ell-e^z$, it follows that there exists $M>0$ such that 
$$Q_M^c\subset A_1(c),$$
for all $c\in\cD(\ell,1)$. Given the description of the Fatou set $\fatou(\fc)$ in Theorem \ref{Fatou_set}, it is easy to see that $\julia(F_{c_0})\setminus Q_M^c$ is a compact set and, from definition, all the sets $A_k(c_0)$, $k\geq1$ are open sets, then there exists $q\geq1$ such that $\julia(F_{c_0})\setminus Q_M^c\subset A_1(c_0)\cup...\cup A_q(c_0)$. Thus
\[
\julia(F_{c_0})\subset A_1(c_0)\cup...\cup A_q(c_0). 
\]
Again, by the Hausdorff continuity of $c\mapsto\julia(\Fc)$, it follows that there exists $r>0$ such that 
\begin{equation}\label{fin_cov}
\julia(F_{c})\subset A_1(c)\cup...\cup A_q(c),
\end{equation}
for $c\in B(c_0,r)$. But $\julia(\Fc)\subset Q$ is closed without critical points, hence 
\[
u = \min\{1,\inf\{|\Fc'(z)|:z\in\julia(\Fc)\}\}>0,
\]
since $\fc'(z)=\ell-e^z$. Fix $z\in\julia(\Fc)$ and $n\geq1$. Using (\ref{fin_cov}) we divide the finite sequence $\{\Fc^j(z)\}_{j=0}^n$ into blocks of length $L_0\leq q$ such that the modulus of the derivative of the composition along each such a block is larger than 2. This implies that $|(\Fc^n)'(z)|>2^{\lfloor n/q\rfloor}u^q$ which finishes the proof. 
\end{proof}

From Theorem \ref{prop9.3} we know that the map $c\mapsto h_c(z)$ is holomorphic for every $z\in Q$, so let $h'_c(z)$ denote the derivative of such map. We also consider the following notation for a clearer development in the proofs of the following results. We will writ $F(c,z)=\Fc(z)$ and more generally $F^n(c,z)=\Fc^n(z)$. 
\begin{proposition}\label{prop9.5}
For every $c_0\in\cD(\ell,1)$ there exists $r>0$ such that 
\[
T:=\sup\{|h_c'(z)|:c\in B(c_0,r),\ z\in\julia(F_{c_0})\}<+\infty.
\]
\end{proposition}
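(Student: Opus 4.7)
My plan is to differentiate the conjugacy equation $F_c\circ h_c=h_c\circ F_{c_0}$ with respect to $c$, iterate it along $F_{c_0}$-orbits, and use the uniform expansion from Lemma~\ref{lemma9.4} to absorb the resulting geometric series. The crucial structural feature of the family is that
\[
\partial_c F_c(z)=1-(\ell-1)/c=:A(c)
\]
is \emph{independent of} $z$, because the only $c$-dependent terms in $F_c(z)=\ell z+c-(\ell-1)\log c-e^z$ are additive. Differentiation of the conjugacy then yields the cohomological recurrence
\[
h'_c(F_{c_0}(z)) \;=\; A(c)+F'_c(h_c(z))\,h'_c(z),
\]
and unrolling by induction, together with the chain rule and $F_c^{j}\circ h_c=h_c\circ F_{c_0}^{j}$, produces for every $n\ge 1$
\[
h'_c(z) \;=\; \frac{h'_c(F_{c_0}^n(z))}{(F_c^n)'(h_c(z))} \;-\; A(c)\sum_{k=1}^{n}\frac{1}{(F_c^k)'(h_c(z))}.
\]

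Next, I would shrink $r$ so that Lemma~\ref{lemma9.4} applies uniformly on $\overline{B(c_0,r)}$. Since $h_c(z)\in\cJ(F_c)$, the lemma gives $|(F_c^k)'(h_c(z))|\ge L\kappa^k$, whence the sum is geometrically controlled:
\[
\left|A(c)\sum_{k=1}^{n}\frac{1}{(F_c^k)'(h_c(z))}\right| \;\le\; \frac{\sup_{c\in B(c_0,r)}|A(c)|}{L(\kappa-1)} \;=:\; C_1\;<\;+\infty,
\]
uniformly in $n$, $z\in\cJ(F_{c_0})$, and $c\in B(c_0,r)$ (note $|A(c)|$ is bounded on $\cD(\ell,1)$ because $|c|\ge\ell-1\ge 1$). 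For the telescoping residual $h'_c(F_{c_0}^n(z))/(F_c^n)'(h_c(z))$, the plan is to fix a compact $K\subset Q$ (for example, a compact subset of $\cJ(F_{c_0})\cap Q_M$ in the notation introduced in the proof of Lemma~\ref{lemma9.4}) on which $|h'_c|$ is uniformly bounded: the joint continuity $(c,z)\mapsto h_c(z)$ from Theorem~\ref{prop9.3}(b) places $h_c(K)$ inside a fixed compact subset of $Q$ for $c\in\overline{B(c_0,r')}$ with $r'>r$, and Cauchy's integral formula applied to the holomorphic map $c\mapsto h_c(z)$ furnishes a constant $M_K$ with $|h'_c(z)|\le M_K$ on $B(c_0,r)\times K$. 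For $z\in\cJ_r(F_{c_0})$ one picks $n=n(z)$ with $F_{c_0}^n(z)\in K$, and the identity yields
\[
|h'_c(z)|\;\le\;\frac{M_K}{L\kappa^n}+C_1\;\le\;\frac{M_K}{L}+C_1,
\]
independent of $z$ and $c\in B(c_0,r)$.

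The main obstacle is the escaping subset $\Esc(F_{c_0})\subset\cJ(F_{c_0})$, whose forward orbits never return to $K$. For $z\in\Esc(F_{c_0})$ I would let $n\to\infty$ in the displayed identity and need to establish an a priori subexponential bound on $|h'_c(F_{c_0}^n(z))|$ that beats the factor $|(F_c^n)'(h_c(z))|\ge L\kappa^n$. The natural route is the conformal change to $\C^*$ via the projection $H$: setting $\hat h_c=H\circ h_c\circ H^{-1}$, which by Theorem~\ref{prop9.2} is a q.c.\ homeomorphism of $\widehat{\C}$ with dilatations tending to $1$ as $c\to c_0$, the identity $h'_c(z)=\partial_c\hat h_c(e^z)/\hat h_c(e^z)$ combined with Hölder estimates for such q.c.\ maps near the fixed endpoints $\{0,\infty\}$ should control $|h'_c(F_{c_0}^n(z))|$ against the near-doubly-exponential expansion of $|(F_c^n)'(h_c(z))|$ produced by $F_c(z)\sim -e^z$ along escaping orbits, so that the residual still vanishes in the limit.
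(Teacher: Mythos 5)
Your telescoping identity is correct and is, in disguised form, the same differentiation-of-the-conjugacy that drives the paper's argument; but the way you deploy it leaves a genuine gap, and the escaping set is exactly where your argument stops being a proof. The bound you derive,
\[
|h'_c(z)|\;\le\;\frac{M_K}{L\kappa^{n}}+C_1,
\]
only goes through when you can find, for the given $z$, an $n$ with $F_{c_0}^n(z)$ in a fixed compact $K$. For $z\in\Esc(F_{c_0})$ no such $n$ exists, and $\Esc(F_{c_0})$ is a large subset of $\cJ(F_{c_0})$ (the paper notes its Hausdorff dimension is $2$). Your fallback for this case, letting $n\to\infty$ and beating the growth $|(F_c^n)'(h_c(z))|\ge L\kappa^n$ with a H\"older estimate for $\hat h_c=H\circ h_c\circ H^{-1}$ near the endpoints $\{0,\infty\}$ of $\C^*$, is only a heuristic: you write ``should control'' and ``natural route,'' and never produce a quantitative bound on $|h'_c(F_{c_0}^n(z))|$. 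In fact along escaping orbits $|(F_c^n)'|$ grows super-exponentially (roughly like an iterated exponential, because $|F_c'(w)|\approx e^{\real w}$), so even a geometric H\"older bound on $\hat h_c$ is not obviously subordinate; this would need to be worked out carefully and is not, as written, a proof.

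The fix that removes the problem at the root is the one the paper uses: evaluate your identity only at \emph{periodic} points. If $F_{c_0}^n(z)=z$ then $h'_c(F_{c_0}^n(z))=h'_c(z)$, and the residual term collapses: your identity rearranges to
\[
h'_c(z)\left(1-\frac{1}{(F_c^n)'(h_c(z))}\right)\;=\;-A(c)\sum_{k=1}^{n}\frac{1}{(F_c^k)'(h_c(z))},
\]
so taking $n$ a sufficiently large multiple of the period makes the left factor $\ge 1/2$ in modulus (by Lemma~\ref{lemma9.4}), and the geometric sum gives the uniform bound $|h'_c(z)|\le 2\sup|A(c)|\,\kappa/(L(\kappa-1))$ with no compactness argument and, crucially, no escaping case to worry about. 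One then extends to an arbitrary $w\in\cJ(F_{c_0})$ by density of periodic points together with Theorem~\ref{prop9.3}(a),(b): the holomorphic functions $c\mapsto h_c(w_n)$ converge to $c\mapsto h_c(w)$ uniformly on compact subsets of the parameter disc, so by Weierstrass their $c$-derivatives converge as well, and the bound passes to the limit. This is essentially your identity plus the observation that periodic points close the telescoping loop for you, so the escaping set never enters.

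One small point in your favour: you correctly compute $\partial_c F_c(z)=1-(\ell-1)/c=A(c)$, whereas the paper's unrolling of $D_1F^n$ tacitly replaces $D_1F(c,\cdot)$ by $1$. Since $|A(c)|<2$ on $\cD(\ell,1)$ this only changes the constant, but your bookkeeping is the correct one.
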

\begin{proof}
Let $z_0\in\julia(F_{c_0})$ be a periodic point of $F_{c_0}$. Fix a period $n\geq1$ of $z_0$, that is 
$$F^n(c,h_c(z_0))=h_c(z_0),$$
for all $c\in\cD(\ell,1)$. Now, differentiate the above equation with respect to the first variable $c$: 
\begin{equation}\label{eq9.7}
D_1F^n(c,h_c(z_0)) + D_2F^n(c,h_c(z_0))\cdot h_c'(z_0)=h_c'(z_0),
\end{equation}
which implies that 
\[
h_c'(z_0) = \dfrac{D_1F^n(c,h_c(z_0))}{1-D_2F^n(c,h_c(z_0))}.
\]
Now, from the expansiveness in Proposition \ref{prop1} we have that if $n\geq1$, as a period of $z_0$, is large enough (depending on $c$), then 
\[
|h_c'(z_0)|\leq2\dfrac{|D_1F^n(c,h_c(z_0))|}{|D_2F^n(c,h_c(z_0))|}. 
\]
So, to finishes the proof it is enough to get an upper bound for the right hand of the above inequality. For all $c\in\cD(\ell,1)$ and all $z\in\C$ we have 
\begin{align*}
D_1F^n(c,z) & = D_1(F(c,F^{n-1}(c,w))) \\
	& = D_1F(c,F^{n-1}(c,z)) + D_2F(c,F^{n-1}(c,z))D_1F^{n-1}(c,z) \\
	& = 1 + D_2F(c,F^{n-1}(c,z))D_1F^{n-1}(c,z),
\end{align*}
This way, inductively, we have 
\[
D_1F^n(c,z) = 1 +\sum_{j=0}^{n-1}D_2F^{n-j}(c,F^j(c,z)). 
\]
Hence, combining Lemma \ref{lemma9.4} with Equation (\ref{eq9.7}) we have 
\begin{equation}\label{eq9.8}
|h_c'(z)|\leq2\sum_{j=0}^n|(\Fc^j)'(h_c(z))|^{-1}\leq2\sum_{j=0}^\infty|(\Fc^j)'(h_c(z))|^{-1}\leq2 L^{-1}\sum_{j=0}^\infty\kappa^j=\dfrac{2\kappa}{L(\kappa-1},
\end{equation}
for all $c\in \cD(c_0,r)$, where $r>0$ is taken from Lemma \ref{lemma9.4}. Now, let $w\in\julia(F_{c_0})$ be an arbitrary point. Since periodic point of $F_{c_0}$ are dense in $\julia(F_{c_0})$, there exists a sequence of periodic points $\{w_n\}_n\subset\julia(F_{c_0})$ such that $\lim_n w_n=w$. From the continuity part Theorem \ref{prop9.3} (b), it follows that $\{c\mapsto h_c(w_n)\}\to (c\mapsto h_c(w))$ uniformly on all compact subsets of $\cD(\ell,1)$. From the holomorphic part Theorem \ref{prop9.3} (a), we have that the functions $c\mapsto h_c(\zeta)$, $\zeta\in\julia(F_{c_0})$ are analytic, combining all of these with Equation (\ref{eq9.8}) we conclude that 
\[
|h_c'(w)|=\lim_n|h_c'(w_n)|\leq\dfrac{2\kappa}{L(\kappa-1)}<+\infty.
\]  
\end{proof}
We are ready to state the main result on quasi-conformality that will be useful in the analyticity of the Hausdorff dimension. 

Given $K,\alpha>0$ and $c\in\cD(\ell,1)$ we say that a map $h:\C\to\C$ is $(K,\alpha,c)$-Hölder continuous if 
\[
|h(x)-h(y)|\leq K|x-y|^\alpha,
\]
for all $z\in\julia(\Fc)$ and all $x,y\in B(z,2^{-10})$. 

\begin{proposition}\label{prop9.6}
For every $c_0\in\cD(\ell,1)$ there exists $\hat{K}_{c_0}>1$ such that if $c\in \cD(c_0,r)$, with $r>0$ sufficiently small, then the conjugating homeomorphism $h_c:\C\to\C$ is $(\hat{K}_{c_0},1/q_c,c_0)$-Hölder continuous, where $q_c$ is the quasi-conformality constant of $h_c$. 
\end{proposition}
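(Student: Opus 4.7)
The approach is to combine the classical local Hölder continuity of $K$-quasiconformal homeomorphisms (with exponent $1/K$) with the uniform bounds from Proposition \ref{prop9.5} and Theorem \ref{prop9.3}, in order to obtain a Hölder constant uniform in $c \in \cD(c_0,r)$. The core tool is the standard Mori-type distortion inequality: if $h$ is a $K$-quasiconformal homeomorphism of $\C$, then for every ball $B(z_0, 2R)$ and all $x, y \in B(z_0, R)$,
\[
|h(x) - h(y)| \le M(K) \cdot \operatorname{osc}_{B(z_0, 2R)}(h) \cdot \left( \frac{|x - y|}{R} \right)^{1/K},
\]
where $\operatorname{osc}_E(h) = \sup_{u, v \in E} |h(u) - h(v)|$ and $M(K) > 0$ depends only on $K$. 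This follows from Mori's theorem after an affine rescaling and an application of Koebe's distortion lemma for q.c.\ maps.

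I would apply this to $h = h_c$, $K = q_c$, $R = 2^{-10}$, and $z_0 \in \julia(F_{c_0})$, normalizing so that $h_{c_0} = \id$. Proposition \ref{prop9.5} yields $|h'_c(z)| \le T$ for $z \in \julia(F_{c_0})$ and $c \in \cD(c_0, r)$, where $h'_c(z)$ is the $c$-derivative. Integrating along the segment from $c_0$ to $c$ then gives $|h_c(z) - z| \le Tr$ for every $z \in \julia(F_{c_0})$. Combining this with the joint continuity $(c, z) \mapsto h_c(z)$ from Theorem \ref{prop9.3}(b) and the convergence of the dilatations $q_c \to 1$ as $c \to c_0$ from Theorem \ref{prop9.3}(c), one obtains a uniform bound $\operatorname{osc}_{B(z_0, 2^{-9})}(h_c) \le M_0 < \infty$ valid for all $z_0 \in \julia(F_{c_0})$ and $c \in \cD(c_0, r)$. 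Substituting into the Mori inequality gives
\[
|h_c(x) - h_c(y)| \le M(q_c) \cdot M_0 \cdot 2^{10/q_c} \cdot |x - y|^{1/q_c},
\]
and since $q_c$ stays uniformly close to $1$ on $\cD(c_0, r)$, the coefficient on the right is bounded by some $\hat{K}_{c_0} > 1$ independent of $c$, proving the proposition.

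The principal technical obstacle lies in establishing the uniform oscillation bound $\operatorname{osc}_{B(z_0, 2^{-9})}(h_c) \le M_0$ across all $z_0 \in \julia(F_{c_0})$, since this set is not compact in $Q$ (it extends to $+\infty$ in the real direction). A naive compactness argument does not suffice: the pointwise estimate $|h_c(z)-z|\le Tr$ lives on $\julia(F_{c_0})$ only, whereas the oscillation is computed over entire small balls. I expect this to be handled by exploiting the asymptotic behavior of $F_c$ at infinity: for large $\real(z)$, all maps $F_c$ with $c$ near $c_0$ are well-approximated by the common asymptotic model $z \mapsto -e^z/c^{\ell-1}$, so one can split $\julia(F_{c_0})$ into a bounded portion (on which Ascoli-type compactness for $K$-quasiconformal families with uniformly bounded dilatation applies) and a tail, the latter being controlled via explicit asymptotic comparison with the model map.
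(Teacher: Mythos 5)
Your reduction to a Mori-type inequality is the right instinct, but you've correctly flagged and then left open the crux of the whole argument: the uniform oscillation bound $\operatorname{osc}_{B(z_0,2^{-9})}(h_c) \le M_0$. The difficulty is exactly as you describe — Proposition~\ref{prop9.5} gives $|h'_c(z)|\le T$ only at points $z\in\julia(F_{c_0})$, so the integrated estimate $|h_c(z)-z|\le Tr$ also lives only on the Julia set, while the oscillation requires control over whole Euclidean balls, most of whose points lie in the Fatou set. The asymptotic-comparison workaround you sketch for the tail of the Julia set is not developed and it is not at all clear that comparing $F_c$ to the model $z\mapsto -e^z$ at infinity gives pointwise control of the \emph{conjugacy} $h_c$ off the Julia set; this is a substantial unproven step, not a routine one.

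The paper's proof sidesteps the oscillation bound entirely with a different and cleaner mechanism. Fix $x\in\julia(F_{c_0})$ and write $h_c$ on $B(x,1)$ as $h_c = \varphi\circ g$, where $\varphi:\D\to V=h_c(B(x,1))$ is a Riemann map with $\varphi(0)=h_c(x)$ and $g=\varphi^{-1}\circ h_c$ is a $q_c$-quasiconformal map between round disks. Two separate ingredients then replace the oscillation bound. First, $|\varphi'(0)|$ is bounded by combining (i) connectedness of the Julia set, which forces some $z\in\partial B(0,1/2)$ with $\varphi(z)\in\julia(F_c)$, so that $w:=h_c^{-1}(\varphi(z))$ lies in $B(x,1)\cap\julia(F_{c_0})$ where the pointwise estimate from Proposition~\ref{prop9.5} \emph{does} apply, giving $|\varphi(z)-h_c(x)|\le 1+2rT$; and (ii) the Koebe $1/4$-theorem, which converts this into $|\varphi'(0)|\le 16(1+2rT)$. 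Second, Mori's theorem is applied to the normalized map $g$ (disk to disk), not to $h_c$ directly, yielding $|g(z_1)-g(z_2)|\le 16|z_1-z_2|^{1/q_c}$ and in particular $g(B(x,2^{-10}))\subset B(0,1/2)$ once $q_c<2$. The Koebe distortion theorem on $\varphi|_{B(0,1/2)}$ then combines both pieces into the desired Hölder estimate. The crucial advantage of this route is that it never needs to estimate $h_c$ at a Fatou point: the derivative bound from Proposition~\ref{prop9.5} is invoked only at $x$ and at $w$, both of which lie on $\julia(F_{c_0})$, and the conformal map $\varphi$ together with connectedness of the Julia set does the work of propagating control to the rest of the ball. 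You should replace the speculative tail argument with this factorization through the Riemann map.
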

\begin{proof}
Combining Theorem \ref{prop9.3} and Proposition \ref{prop9.5} we may assume $r>0$ small enough so that $q_c<2$ for every $c\in\cD(c_0,r)$. Fix $x\in\julia(F_{c_0})$, and let $V=h_c(B(x,1))$. It follows that $V$ is an open simply connected set so let $\varphi:\D\to V$ be its conformal representation with $R(0)=h_c(x)$. It is easy to see from Theorem \ref{Fatou_set} that $\julia(\Fc)$ is connected for every $c\in\cD(\ell,1)$, so there exists a connected subset of $\julia(F_{c_0})$ joining $x$ and the boundary $\partial B(x,1)$. From definition $\julia(\Fc)=h_c(\julia(F_{c_0}))$, so there is a connected subset of $\julia(\Fc)$ joining $h_c(x)$ and $\partial V$. Therefore there exists $z\in\partial B(0,1/2)$ such that $\varphi(z)\in\julia(\Fc)$. Hence $\varphi(B(0,1/2))$ does not contains any ball centered at $h_c(x)$ and with radius greater than $|\varphi(z)-h_c(x)|$. So, if we set $\varphi(z)=h_c(w)$, with $w\in B(x,1)\cap \julia(F_{c_0})$ and $\varphi(z)\in V\cap\julia(\Fc)$ it follows from Proposition \ref{prop9.5} that 
\begin{align*}
|\varphi(z)-h_c(x)| & = |h_c(w)-h_c(x)| \\ 
	& \leq |h_c(w)-w| + |w-x| + |x-h_c(x)| \\ 
	& = |h_c(w)-h_c(x)| + 1 + |h_{c_0}(x)-h_c(x)| \\ 
	& = \left|\int_{c_0}^c h_\gamma'(w)d\gamma\right| + 1 + \left|\int_{c_0}^ch_\gamma(x)d\gamma\right| \\ 
	& \leq \int_{c_0}^c|h_\gamma'(w)||d\gamma| + 1 + \int_{c_0}^c|h_\gamma(x)||d\gamma| \\ 
	& \leq 1 + 2T|c-c_0|\leq 1 + 2rT. 
\end{align*}
Therefore $\varphi(B(0,1/2))$ does not contains the ball $B(h_c(x),2(1+2rT))$. $1/4$-Koebe's distortion theorem implies that 
\begin{equation}\label{eq9.9}
|\varphi'(0)|\leq 16(1+2rT).
\end{equation}
By considering the map $g=\varphi^{-1}\circ h_c: B(x,1)\to B(0,1)$ as a quasi-conformal homeomorphism between two disks of radius 1, Mori's theorem says that 
\[
|g(z_1)-g(z_2)|\leq16|z_1-z_2|^{1/q_c},
\]
for every $z_1,z_2\in B(x,1)$. In particular, for $z\in B(x,1)$ 
\[
|g(z)|\leq|g(z)-g(x)|\leq16|z-x|^{1/q_c}. 
\]
Therefore, $|z-x|\leq2^{-10}$ implies $|g(z)|\leq16(2^{-10})^{1/q_c}\leq16(2^{-10})^{1/2} = 1/2$. Then for $z_1,z_2\in B(x,2^{-10})$, from (\ref{eq9.9}) we have 
\begin{align*}
|h_c(z_2)-h_c(z_1)| & = |\varphi(g(z_2)) - \varphi(g(z_1))1\leq K|\varphi'(0)||g(z_2)-g(z_1)| \\ 
	& \leq 16K|\varphi'(0)||z_2-z_1|^{1/q_c}\leq16K(1+2rT)|z_2-z_1|^{1/q_c},
\end{align*}
where $K$ is the Koebe's distortion constant corresponding to the scale $1/2$, proving this way the Hölder regularity of the map $h_c$. 
\end{proof}

\section{Real Analyticity of the Hausdorff Dimension}\label{sec_an}
We consider first the following continuity result on the pressure function. 
\begin{lemma}\label{lemma10.1}
For $c\in\cD(\ell,1)$ and $t>1$, the function $(t,c)\mapsto P_c(t)$ is continuous. 
\end{lemma}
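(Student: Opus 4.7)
The plan is to reduce continuity of $(t,c)\mapsto P_c(t)$ to continuous dependence of the leading eigenvalue of a family of transfer operators, using the quasi-conformal conjugacies of Section \ref{sec_qc} to transport everything onto a single Banach space. By Proposition \ref{prop_press}(iii) we have $P_c(t)=\log\alpha_{t,c}$, and Theorem \ref{theo5.4}(a) says that $\alpha_{t,c}$ is a simple isolated eigenvalue of $\cL_{t,c}$ acting on $\hh(\cJ(\Fc))$. So continuity of the pressure will follow from continuity of this eigenvalue under parameter perturbation, which is a standard consequence of Kato--Rellich perturbation theory provided we can exhibit the family as a continuous operator-valued map near $(t_0,c_0)$.

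First I would fix $(t_0,c_0)$ and, using the q.c.\ conjugacy $h_c:Q\to Q$ from Theorem \ref{prop9.3}, pull $\cL_{t,c}$ back to an operator $\widetilde{\cL}_{t,c}$ acting on functions defined on the fixed Julia set $\cJ(F_{c_0})$, namely
\[
\widetilde{\cL}_{t,c}g(z)=\sum_{x\in F_{c_0}^{-1}(z)}|F_c'(h_c(x))|^{-t}\,g(x),
\]
so that $\widetilde{\cL}_{t,c}$ is similar to $\cL_{t,c}$ via the conjugation $g\mapsto g\circ h_c^{-1}$ and therefore has the same spectrum, in particular the same leading eigenvalue $\alpha_{t,c}$. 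The choice of conjugation is so that the preimage structure is that of the unperturbed dynamics $F_{c_0}$, making all operators act on the same domain.

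Next I would verify that $(t,c)\mapsto \widetilde{\cL}_{t,c}$ is continuous in operator norm on $\hh(\cJ(F_{c_0}))$ for some $\alpha\in(0,1]$, at least in a neighbourhood of $(t_0,c_0)$. Pointwise continuity of the weights $|F_c'(h_c(x))|^{-t}=|\ell-e^{h_c(x)}|^{-t}$ in $(t,c)$ is immediate from Theorem \ref{prop9.3}(b) and holomorphy of $\ell-e^w$. Uniform summability of the series over $F_{c_0}^{-1}(z)$ follows from Lemma \ref{lemma9.4}, which gives an expansion constant $L,\kappa$ valid uniformly in a neighborhood of $c_0$; this plus Koebe distortion for the inverse branches (cf.\ (\ref{eq1*})) gives a uniform summable majorant for $t$ bounded away from $1$. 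The $\alpha$-Hölder estimates on the weights come from Proposition \ref{prop9.6}: since $h_c$ is $(\hat{K}_{c_0},1/q_c,c_0)$-Hölder with $q_c\to 1$ as $c\to c_0$, pulling back an $\alpha$-Hölder function through $h_c$ loses at most a factor of the $h_c$-Hölder constant and yields a function that is $\alpha/q_c$-Hölder, hence $\alpha'$-Hölder for any fixed $\alpha'<\alpha$ once $c$ is sufficiently close to $c_0$. Combining these bounds gives continuity of $(t,c)\mapsto \widetilde{\cL}_{t,c}$ in $L(\hh[\alpha'])$ on a neighbourhood of $(t_0,c_0)$.

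Finally I would apply perturbation theory for an isolated simple eigenvalue: since $\alpha_{t_0,c_0}$ is isolated and simple for $\widetilde{\cL}_{t_0,c_0}$ (by Theorem \ref{theo5.4}(a) after rescaling), continuity of the operator family in operator norm forces the isolated eigenvalue to vary continuously, so $(t,c)\mapsto\alpha_{t,c}$ is continuous near $(t_0,c_0)$, and therefore so is $P_c(t)=\log\alpha_{t,c}$. The main obstacle is the Hölder-norm continuity of the pulled-back weights: the conjugacy $h_c$ is only $1/q_c$-Hölder, so one cannot work in a fixed $\alpha$-Hölder space without shrinking the exponent. The key point that makes this work is Theorem \ref{prop9.3}(c), which ensures $q_c\to 1$, so that for any $\alpha'<\alpha$ one recovers $\alpha'$-Hölder continuity uniformly in a small parameter ball, after which everything is routine.
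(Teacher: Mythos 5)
Your proposal takes a genuinely different route from the paper. You want to read off continuity of the pressure from Kato perturbation theory applied to the transported transfer operators $\cL^0_{c,t}$, whereas the paper proves the lemma by a direct, elementary estimate: it uses the conjugacy $h_c$ to pair the preimage trees of $F_c$ and $F_{c_0}$, bounds the derivative ratios $|(F_c^n)'(h_c(x))|/|(F_{c_0}^n)'(x)|$ between $\gamma^{-n}$ and $\gamma^n$ for any prescribed $\gamma>1$ (exploiting $F_c'(z)=\ell-e^z$ and the fact that $|h_c(z)-z|$ is uniformly small on the Julia set when $c$ is close to $c_0$), and concludes $|P_c(t)-P_{c_0}(t)|\leq t\log\gamma$ directly from the defining limit of the pressure. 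No spectral theory is needed, and continuity in $t$ alone is handled separately via convexity.

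More importantly, there is a gap in the final step of your argument. Perturbation theory gives you a continuously varying simple isolated eigenvalue $\gamma(t,c)$ of $\cL^0_{c,t}$ lying near $\alpha_{t_0,c_0}=e^{P_{c_0}(t_0)}$, but to conclude that $\gamma(t,c)=\alpha_{t,c}=e^{P_c(t)}$ you need to know beforehand that $\alpha_{t,c}$ lies inside the small disk around $\alpha_{t_0,c_0}$ singled out by the spectral gap — which is essentially the statement you are trying to prove. Theorem \ref{theo5.4} only tells you that $\alpha_{t,c}$ is a simple isolated eigenvalue of $\cL_{t,c}$; it does not say that $\alpha_{t,c}$ is the spectral radius, nor that it is close to $\alpha_{t_0,c_0}$. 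Indeed, the paper is organized precisely to avoid this circularity: Lemma \ref{lemma10.1} is proved first by the elementary estimate, and is then invoked in the proof of the main theorem (together with Equations (\ref{eq10.22}) and (\ref{eq10.23})) exactly to identify the Kato eigenvalue $\gamma(c,t)$ with $e^{P_c(t)}$. To repair your argument along the lines you propose, you would first have to establish that $\alpha_{t,c}$ is the spectral radius of $\cL^0_{c,t}$ on $\hh$ with a spectral gap uniform in $(t,c)$ near $(t_0,c_0)$ — a quasi-compactness assertion that neither your proposal nor the paper makes explicit.
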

\begin{proof}
If we fix $c_0\in\cD(\ell,1)$, from the definition of $P(t)=\underline{P}(t)=\overline{P}(t)$ we know that $P_c(t)$ holds a Hölder inequality, which implies that $t\mapsto P_c(t)$ is continuous for $t\in(1,+\infty)$. Then, for $t_0>1$ there exists $\xi\in(0,t_0-1)$ such that if $t\in(t_0-\xi,t_0+\xi)$ 
\begin{equation}\label{eq10.1}
|P_{c_0}(t)-P_{c_0}(t_0)|<\varepsilon/2,
\end{equation}
for some $\varepsilon>0$. Now, consider $\gamma>1$ such that $(t_0+\xi)\log\gamma<\varepsilon/2$ and let $0<r_1\leq r$ be so small that 
\[
M = \inf\{\{\inf_{z\in\julia(\Fc)}\real(z)\}:c\in\cD(c_0,r_1)\}>-2\ell.
\]
There exists $0<r_2\leq r_1$ small enough that if $\real(z),\real(w)\geq M$, with $|z-w|\leq r_2$ then 
\[
\gamma^{-1}<\left|\dfrac{\ell-e^z}{\ell-e^w}\right|<\gamma.
\]
From Proposition \ref{prop9.3}-(c), there exists $\eta\in(0,\min\{\xi,r_2\})$ so that if $|c-c_0|<\eta$ then $|h_c(z)-z|<r_2$. Now fix $z\in\julia(F_{c_0})$ and take $n\geq1$ and $x\in F_{c_0}^{-n}(z)$. Since $h_c$ conjugates $F_c$ and $F_{c_0}$, we have $h_c(F_{c_0}^{-n}(z))=\Fc^{-n}(z)$. Moreover, for every $0\leq i\leq n$, and every $x\in F_{c_0}^{-n}(z)$, we have $h_c(F_{c_0}^i(x))=F_{c_0}^i(h_c(x))$, which implies $|F_{c_0}^i(x)-F_{c}^i(h_c(x))|\leq r_2$. Hence, 
\begin{align*}
\dfrac{|(F_c^n)'(h_c(x))|}{|(F_{c_0}^n)'(x)|} & = \dfrac{|(f_c^n)'(h_c(x))|}{|(f_{c_0}^n)'(x)|} = \dfrac{|\prod_{i=0}^{n-1}f_c'(f_c^i(h_c(x))|}{|\prod_{i=0}^{n-1}f_{c_0}'(f_{c_0}^i(x))|} \\ 
	& = \prod_{i=0}^n\dfrac{|\ell-\exp(f_c^i(h_c(x)))|}{|\ell-\exp(f_{c_0}^i(x))|} \in (\gamma^{-n},\gamma^n).
\end{align*}
Since $h_c:F_{c_0}^{-n}(z)\to F_c^{-n}(h_c(z))$ is a bijection, we conclude that 
\[
\dfrac{\sum_{x\in F_c^{-n}(h_c(z))}|(F_c^n)'(x)|^{-t}}{\sum_{x\in F_{c_0}^{-n}(z)}|(F_{c_0}^n)'(x)|^{-t}} \in (\gamma^{-tn},\gamma^{tn}), 
\]
so 
\begin{equation*}
\dfrac{1}{n}\log \left(\sum_{x\in F_c^{-n}(h_c(z))}|(F_c^n)'(x)|^{-t}\right)-\dfrac{1}{n}\log \left(\sum_{x\in F_{c_0}^{-n}(z)}|(F_{c_0}^n)'(x)|^{-t}\right) \in (-t\log \gamma,t\log\gamma)
\end{equation*}
This way, $P_c(t)-P_{c_0}(t)\in(-t\log\gamma,t\log\gamma)$ for every $c\in\cD(c_0,\eta)$ and consequently $|P_c(t)-P_{c_0}(t)|<\varepsilon/2$ for every $(t,c)\in(t_0-\eta,t_0+\eta)\times\cD(c_0,\eta)$. Combining (\ref{eq10.1}) we have 
\[
|P_c(t)-P_{c_0}(t_0)|<\varepsilon,
\]
for every $(t,c)\in (t_0-\eta,t_0+\eta)\times\cD(c_0,\eta)$ finishing the proof.
\end{proof}
In order to be able to use analytic property of transfer operators in Theorem \ref{cor8.7} we need the following notation. 

Fix $c_0\in\cD(\ell,1)$ and $t_0\in(1,+\infty)$. Following Proposition \ref{prop9.6} we have that $h_c:\julia(F_{c_0})\to\julia(\Fc)$ is $1/q_c$-Hólder continuous depending on $c$, and since $q_c$ converge to 1 as $c\to c_0$, we get that for $r>0$ sufficiently small 
\begin{equation}\label{eq10.2}
\alpha = \inf\{1/q_c:c\in\cD(c_0,r)\}>0.
\end{equation}

For $(t,c)\in(1,+\infty)\times\cD(c_0,r)$ let $\cL_{c,t}^0:\hh(\julia(F_{c_0}))\to\hh(\julia(\Fc))$ be the operator 
\[
\cL_{c,t}^0g(z) = \sum_{x\in F_{c_0}^{-1}(z)}|F_c'(h_c(x))|^{-t}g(x).
\]
Also, we need to embed $c\in\C^2$ as $(x+iy)\mapsto(x,y)\in\C^2$, and $t\in\C$ so we can use Theorem \ref{cor8.7}. The following is the last and most technical result in this work, and we left its proof to the end. 

\begin{proposition}\label{prop10.2}
Fix $(c_0,t_0)\in\cD(\ell,1)\times(1,+\infty)$. Then there exist $R>0$ and a holomorphic function 
\[
\fL:\D_{\C^3}((c_0,t_0),R)\to L(\hh(\julia(F_{c_0}))),
\]
($c_0\in\C^2,t_0\in\C$ and $\alpha$ comes from Equation (\ref{eq10.2}) with $r>0$ replaced by $R$) such that for every $(c,t)\in B(c_0,R)\times\cD(t_0,R)\subset\C\times\R$ 
\begin{equation}\label{eq10.3}
\fL(c,t)=\cL_{c,t}^0.
\end{equation}
\end{proposition}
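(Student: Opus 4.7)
The plan is to apply Theorem~\ref{cor8.7} to a holomorphic family of weights $\{\phi_\sigma\}_{\sigma\in G}$ on $\julia(F_{c_0})$, with $G\subset\C^3$ a small neighborhood of $(c_0,\bar c_0,t_0)$, each $\phi_\sigma$ being a holomorphic extension of the real-parameter weight $w_{c,t}(x):=|F_c'(h_c(x))|^{-t}$. Throughout I use the coordinate change $(u,v)\leftrightarrow(c_+,c_-):=(u+iv,u-iv)$ on $\C^2$, so that the real slice $\R^2\subset\C^2$ corresponds to $\{(c,\bar c):c\in\C\}$; an extension holomorphic in $(c_+,c_-)$ is then precisely a $\C^2$-holomorphic extension in the original variables $(u,v)$.

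First I would construct the extension. The weight $w_{c,t}$ is only real-analytic in $c=u+iv$ because of the modulus, so the standard device is to treat $c$ and $\bar c$ as independent complex variables. Set $A_x(c):=F_c'(h_c(x))=\ell-e^{h_c(x)}$, which is holomorphic on $\cD(\ell,1)$ by Theorem~\ref{prop9.3}(a); its Schwarz-reflected companion $\tilde A_x(c):=\overline{A_x(\bar c)}$ is also holomorphic there, since $\cD(\ell,1)$ is symmetric under conjugation. For $(c_+,c_-)$ near $(c_0,\bar c_0)$ define
\[
\Phi_x(c_+,c_-):=A_x(c_+)\,\tilde A_x(c_-),
\]
jointly holomorphic in $(c_+,c_-)$ and reducing to $|F_c'(h_c(x))|^2$ on the real slice. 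The key uniform bound is
\[
\left|\dfrac{A_x'(c)}{A_x(c)}\right|=|h_c'(x)|\cdot\left|\dfrac{e^{h_c(x)}}{\ell-e^{h_c(x)}}\right|,
\]
where the second factor is uniformly bounded on $\julia(F_{c_0})$ because $|\ell-e^{h_c(x)}|=|F_c'(h_c(x))|\ge L\kappa$ by Proposition~\ref{prop1} and the ratio tends to $-1$ as $\real h_c(x)\to+\infty$, while the first factor is uniformly bounded by Proposition~\ref{prop9.5}. Hence $\Phi_x(c_+,c_-)/|A_x(c_0)|^2$ stays in a small sector around $1$, uniformly in $x\in\julia(F_{c_0})$, for $(c_+,c_-)$ in a small polydisc around $(c_0,\bar c_0)$. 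The principal branch of $\log$ is therefore well-defined, and I would set
\[
\phi_\sigma(x):=\exp\!\bigl(-\tfrac{t}{2}\log\Phi_x(c_+,c_-)\bigr),\qquad \sigma=(c_+,c_-,t)\in\C^3.
\]
By construction $\phi_\sigma$ is holomorphic in $\sigma$ and satisfies $\phi_{(c,\bar c,t)}=w_{c,t}$ for $(c,t)\in\cD(\ell,1)\times\R$.

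Next I would verify hypotheses (a)--(f) of Theorem~\ref{cor8.7} for $\{\phi_\sigma\}_{\sigma\in G}$ with $G:=\D_{\C^3}((c_0,\bar c_0,t_0),R)$ and $\alpha$ as in \eqref{eq10.2} (available by Proposition~\ref{prop9.6}), shrinking $R$ as needed. Condition (e) holds by construction. Since $\Phi_x/|A_x(c_0)|^2$ is bounded above and below on $G$, uniformly in $x$, $|\phi_\sigma|$ is comparable to the real-parameter weight $|\phi_{(c_0,\bar c_0,t_0)}|$ on $\julia(F_{c_0})$; combined with the summability of real-parameter weights from \cite{EI2023}, this delivers (a), (c), (d) and (f). The serious step is (b), the dynamical $\alpha$-H\"older condition \eqref{Holder_alp} with constant uniform in $\sigma$. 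Following the standard telescoping, I would expand $\log\phi_{\sigma,n}$ along an inverse branch $(F_{c_0})_v^{-n}$ as a Birkhoff sum and control its oscillation using the contraction \eqref{eq1*}, Koebe distortion on $B(F_{c_0}^n(v),2\delta)$, and the $(\hat K_{c_0},1/q_c,c_0)$-H\"older regularity of $h_c$ from Proposition~\ref{prop9.6} to estimate the modulus of continuity of $x\mapsto\log\Phi_x(c_+,c_-)$ on the Julia set; summing the resulting geometric series in $\beta^{\alpha k}$ gives a H\"older constant uniform on $G$ for $R$ small enough.

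Theorem~\ref{cor8.7} then produces the required holomorphic map $\fL:G\to L(\hh(\julia(F_{c_0})))$ via $\fL(\sigma):=\cL_{\phi_\sigma}$, and on the real slice $\fL(c,\bar c,t)=\cL_{w_{c,t}}=\cL^0_{c,t}$, establishing \eqref{eq10.3}. The hard part will be the uniform dynamical H\"older check in (b): the Koebe distortion, the H\"older regularity of $h_c$, and the uniform expansion from Lemma~\ref{lemma9.4} have to interlock so that the H\"older constant of the $n$-fold product $\phi_{\sigma,n}$ remains bounded independently of both $n$ and $\sigma\in G$. Once this uniform estimate is in hand, the rest is a direct invocation of Theorem~\ref{cor8.7}.
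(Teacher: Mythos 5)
Your plan follows the same overall strategy as the paper: construct a $\C^3$-holomorphic extension of the weight $w_{c,t}(x)=|F_c'(h_c(x))|^{-t}$ and then verify hypotheses (a)--(f) of Theorem~\ref{cor8.7}. Where you differ is in the construction of the extension. The paper writes $\psi_z(c)=\theta_c(z)/\theta_{c_0}(z)$, takes a branch of $\log\psi_z$, expands $\real\log\psi_z(c)$ as a double power series in $(x,y)=(\real c,\ima c)$, bounds the coefficients by Cauchy estimates in combination with the uniform bound \eqref{prop10.5}, and then declares $(x,y)$ complex to get the extension on $\D_{\C^2}(c_0,r/4)$. Your construction via polarization --- $\Phi_x(c_+,c_-)=A_x(c_+)\,\overline{A_x(\overline{c_-})}$ restricting to $|A_x(c)|^2$ on the real slice --- achieves the same thing in one step, bypasses the power-series bookkeeping, and yields the uniform sector bound directly from the logarithmic-derivative estimate $|A_x'/A_x|=|h_c'(x)|\cdot|e^{h_c(x)}/(\ell-e^{h_c(x)})|$ together with Proposition~\ref{prop9.5} and Lemma~\ref{lemma9.4} (the latter is what you should cite for a $c$-uniform lower bound on $|\ell-e^{h_c(x)}|$, rather than Proposition~\ref{prop1} alone, whose constants a priori depend on $c$). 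This is a genuinely cleaner route to the same holomorphic family and would work.

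Where the plan is too optimistic is the claim that the comparability of $|\phi_\sigma|$ with the real-parameter weight, together with summability, delivers conditions (a), (c), (d), and (f). It plainly gives summability and (f), and your telescoping/Koebe/H\"older-of-$h_c$ argument, once carried out, would give the $\alpha$-H\"older part of (a) together with (b) and (d). But condition (c) --- continuity of $\sigma\mapsto\phi_\sigma$ in the $\hh$-\emph{norm}, i.e.\ including the $\alpha$-variation seminorm --- is strictly stronger than pointwise comparability and is in fact the most laborious step in the paper's proof: one must show that $v_\alpha(\phi_\sigma-\phi_{\sigma'})\to0$, which requires controlling the oscillation in $x$ of the \emph{difference} $\log\Phi_x(\sigma)-\log\Phi_x(\sigma')$, not merely its ratio to the base weight. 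In the paper this is handled through estimates \eqref{eq10.16}--\eqref{eq10.21} and a separate argument for the $t$-dependence of $|\theta_{c_0}(\cdot)|^{-t}$ via a gradient/Mean Value estimate. You should flag (c) as requiring an argument parallel to your treatment of (b), not as a consequence of comparability. With that correction the plan is sound.
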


\begin{theorem}
The function $c\mapsto\HD(\julia_r(\Fc))$, $c\in\cD(\ell,1)$ is real-analytic. 
\end{theorem}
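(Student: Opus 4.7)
The plan is to combine Bowen's formula (Theorem \ref{bowen}) with Kato's analytic perturbation theory for simple isolated eigenvalues, using the holomorphic operator family $\fL(c,t)$ from Proposition \ref{prop10.2}. Fix $c_0\in\cD(\ell,1)$ and set $t_0=\HD(\julia_r(F_{c_0}))>1$; Bowen's formula gives $P_{c_0}(t_0)=0$, hence $e^{P_{c_0}(t_0)}=1$. Since $h_{c_0}=\id$, one has $\fL(c_0,t_0)=\cL_{c_0,t_0}^0=\cL_{c_0,t_0}$, so by Theorem \ref{theo5.4} the value $1$ is a simple isolated eigenvalue of this operator, with eigenfunction $\psi_{t_0}$.

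First, I would invoke the standard analytic perturbation theorem for holomorphic families: since $(c,t)\mapsto\fL(c,t)$ is holomorphic from $\D_{\C^3}((c_0,t_0),R)$ into $L(\hh(\julia(F_{c_0})))$, and $1$ is a simple isolated eigenvalue of $\fL(c_0,t_0)$, there exist a smaller polydisk and a holomorphic function $(c,t)\mapsto\lambda(c,t)$ on it, with $\lambda(c_0,t_0)=1$ and $\lambda(c,t)$ a simple isolated eigenvalue of $\fL(c,t)$ throughout.

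Next, I identify $\lambda(c,t)=e^{P_c(t)}$ on the real slice. Using $h_c\circ F_{c_0}=F_c\circ h_c$, a direct change of variables shows that if $\psi_t^{(c)}\in\hh(\julia(F_c))$ is the $e^{P_c(t)}$-eigenfunction of $\cL_{c,t}$ provided by Theorem \ref{theo5.4}, then $\psi_t^{(c)}\circ h_c$ is an eigenfunction of $\cL_{c,t}^0=\fL(c,t)$ with the same eigenvalue. The Hölder regularity of $h_c$ from Proposition \ref{prop9.6}, together with the uniform lower bound (\ref{eq10.2}) on the Hölder exponent, places $\psi_t^{(c)}\circ h_c$ inside $\hh(\julia(F_{c_0}))$. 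By Lemma \ref{lemma10.1}, $e^{P_c(t)}$ is close to $1$ on the real slice near $(c_0,t_0)$, and uniqueness of the simple isolated eigenvalue within the spectral gap forces $\lambda(c,t)=e^{P_c(t)}$ for real $(c,t)\in\cD(c_0,R)\times(t_0-R,t_0+R)$. Consequently $P_c(t)=\log\lambda(c,t)$ is holomorphic in the three complex variables near $(c_0,t_0)$, and in particular real-analytic as a function of $(\RE c,\IM c,t)$.

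Finally, the implicit function theorem applied to $P_c(t)=0$ produces a real-analytic solution $c\mapsto t(c)$ with $t(c_0)=t_0$, provided $\partial_t P_{c_0}(t_0)\neq 0$. This non-vanishing follows from the standard thermodynamic identity $\partial_t P_c(t)=-\int\log|F_c'|\,d\mu_{c,t}$, where $\mu_{c,t}$ is the equilibrium state; the integral is strictly positive because the expansiveness of Proposition \ref{prop1} yields a strict lower bound on $\log|F_c'|$ along orbits. Bowen's formula then identifies $t(c)=\HD(\julia_r(F_c))$, proving real-analyticity. The main obstacle in carrying out this plan is the identification step $\lambda(c,t)=e^{P_c(t)}$: one must verify that $\psi_t^{(c)}\circ h_c$ genuinely belongs to the Banach space $\hh(\julia(F_{c_0}))$ on which $\fL(c,t)$ acts, which is precisely why the Hölder regularity of Proposition \ref{prop9.6} and the uniform bound $\alpha>0$ from (\ref{eq10.2}) are needed, and why the eigenvalue branches of $\fL$ and the original $\cL_{c,t}$ must be matched via a separate continuity argument relying on Lemma \ref{lemma10.1}.
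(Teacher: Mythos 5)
Your proposal is correct and follows essentially the same route as the paper: construct the holomorphic family $\fL(c,t)$, apply Kato's perturbation theory for the simple isolated eigenvalue, identify the perturbed eigenvalue with $e^{P_c(t)}$ on the real slice via the conjugacy $T_c g=g\circ h_c$ (using the H\"older regularity of $h_c$ to land in $\hh(\julia(F_{c_0}))$ and Lemma \ref{lemma10.1} plus the spectral gap to force the match), and then apply the implicit function theorem to $P_c(t)=0$. The one place where you diverge is the non-vanishing of $\partial_t P_c(t)$. You invoke the thermodynamic identity $\partial_t P_c(t)=-\int\log|F_c'|\,d\mu_{c,t}$ and argue positivity of the Lyapunov exponent from Proposition \ref{prop1}; this is conceptually standard but, in this non-compact transcendental setting, the identity itself is something one would need to extract from the spectral-perturbation machinery or from \cite{EI2023}, and the phrase ``strict lower bound on $\log|F_c'|$ along orbits'' is a little loose (single-step derivatives need not exceed $1$; it is the Birkhoff averages that are bounded below by $\log\kappa$). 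The paper instead gives a direct elementary estimate: using the Koebe bound $|((F_c)_v^{-n})'|\le L\beta^n$ one gets $\sum_{x\in F_c^{-n}(z)}|(F_c^n)'(x)|^{-t-u}\le (\text{const})\,\beta^{un}\sum_{x}|(F_c^n)'(x)|^{-t}$, hence $P_c(t+u)-P_c(t)\le u\log\beta<0$, giving $\partial_t P_c\le\log\beta<0$ with no appeal to equilibrium states. The paper's version is more self-contained; yours is cleaner if one is willing to import the derivative formula, but you should then cite or prove it in this setting. Also note that the paper works with an arbitrary $t_0>1$ rather than fixing $t_0=\HD(\julia_r(F_{c_0}))$ at the outset, which yields real-analyticity of the full pressure function $(c,t)\mapsto P_c(t)$ before specializing to the zero locus, though your specialization is harmless.
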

\begin{proof}
Fix $c_0\in\cD(\ell,1)$ and $t_0>1$. From Proposition \ref{prop9.6} we know that $h_c:\julia(F_{c_0})\to\julia(\Fc)$ is $\alpha(c)$-Hölder continuous, $\alpha(c)$ depending on $c$, but converging to 1 as $c\to c_0$, so for every $r>0$ sufficiently small
\[
\alpha = \inf\{\alpha_c:c\in\cD(c_0,r)\}>0.
\]

For $(c,t)\in\cD(c_0,r)\times(1,+\infty)$, let 
\begin{equation}\label{eq10.22}
\fL(c,t)=\cL_{c,t}^0,
\end{equation}
be the (holomorphic) operator given by Proposition \ref{prop10.2}. From Theorem \ref{theo5.4} and Proposition \ref{prop_press} the value $e^{P_{c_0}(t)}$, $t\in B(t_0,R)$ is a simple isolated eigenvalue of the operator $\fL(c_0,t)=\cL_{c_0,t}^0:\hh(\julia(F_{c_0}))\to\hh(\julia(F_{c_0}))$. Applying perturbation theory for linear operators (see \cite{ka95} for further references), we see that there exists $0<R_1\leq R$ and a holomorphic function $\gamma:\D_{\C^3}((c_0,t),R_1)\to\C$ such that $\gamma(c_0,t_0)=e^{P_{c_0}(t_0)}$ and for every $(c,t)\in\D_{\C^3}((c_0,t_0),R_1)$ the number $\gamma(c,t)$ is a simple isolated eigenvalue of $\fL(c,t)$ with the reminder part of the spectrum uniformly separated from $\gamma(c,t)$. 

In particular, there exists $0<R_2\leq R_1$ and $\kappa>0$ such that 
\begin{equation}\label{eq10.23}
\sigma(\fL(c,t))\cap B(e^{P_{c_0}(t_0)},\kappa)=\{\gamma(c,t)\},
\end{equation}
for every $(c,t)\in\D_{\C^3}((c_0,t_0),R_2)$. Now, for each $(c,t)\in \cD(c_0,R)\times(t_0-R,t_0+R)$ consider the operator $\cL_{c,t}:\h1(\julia(F_c))\to\h1(\julia(F_c))$ given by 
\[
\cL_{c,t}g(z)=\sum_{x\in F_c^{-1}(z)}|F_c'(x)|^{-t}g(x).
\]
Take $T_c:\cC_b(\julia(\Fc))\to\cC_b(\julia(F_{c_0}))$, the map given by the formula $T_c(g)=g\circ h_c$. It is not difficult to see that conjugates $\cL_{c,t}:\cC_b(\julia(F_c))\to\cC_b(\julia(F_c))$ and $\cL_{c,t}^0:\cC_b(\julia(F_{c_0}))\to\cC_b(\julia(F_{c_0}))$. Given that $h_c:\julia(F_{c_0})\to\julia(F_c)$ is $\alpha$-Hölder continuous, we have 
\[
T_c(\h1(\julia(F_c)))\subset \hh(\julia(F_{c_0})). 
\]
Hence $e^{P_c(t)}$ is an eigenvalue of the operator
\[
\cL_{c,t}^0:\hh(\julia(F_{c_0}))\to\hh(\julia(F_{c_0})),
\]
and by Lemma \ref{lemma10.1}, $e^{P_c(t)}\in B(e^{P_c(t_0)},\kappa)$ for every $c\in\cD(c_0,R_3)$ and all $t\in(t_0-\rho,t_0+\rho)$ if $\rho\in(0,\min\{t_0,R_2\})$ and $R_3\in(0,R_2)$ are sufficiently small. Combining this and Equations (\ref{eq10.22}) and (\ref{eq10.23}) we see that $\gamma(c,t)=e^{P_c(t)}$ for every $c$ and $t$ as above. Therefore the function $(c,t)\mapsto P_c(t)$ with $(c,t)\in\cD(c_0,R_3)\times(t_0-\rho,t_0+\rho)$ is real-analytic. Since $P_c(s_c)=0$ for $s_c=\HD(\julia_r(F_c))$ by Theorem \ref{bowen}, it follows from the Implicit Function Theorem that to conclude the proof it is enough to show that 
\[
\dfrac{\partial P_c(t)}{\partial t}\neq 0,
\]
for all $(c,t)\in\cD(c_0,R_3)\times (t_0-\rho,t_0+\rho)$. So fix such $c$ and $t$. Fix $z\in\julia(F_c)$. Since for every $u\geq0$ and every $n\geq1$ 
\[
\sum_{x\in F_c^{-n}(z)}|(F_c^n)'(x)|^{-t+u}=\sum_{x\in F_c^{-n}(z)}|(F_c^n)'(x)|^{-t}|(F_c^n)'(x)|^{-u}\leq L^{-u}\beta^{un}\sum_{x\in F_c^{-n}(z)}|(F_c^n)'(x)|^{-t},
\]
we conclude that $P_c(t+u)-P_c(t)\leq u\log \beta$ which implies that $\dfrac{\partial P_c(t)}{\partial t}(c,t)\leq\log\beta<0$ and the proof is done.
\end{proof}

We finish this section with the proof of Lemma \ref{prop10.2}. 

\begin{proof}
(\textbf{of Proposition \ref{prop10.2}}.) Fix $c_0\in\cD(\ell,1)$ and $t_0>1$ and, to simplify notation, we fix $f=f_{c_0}$ and $F=F_{c_0}$. \\ 

For every $c\in\cU(c_0)$, a small neighbourhood of $c_0$, let $\theta_c=F_c'\circ h_c$ and for every $z\in\julia(F)$ let 
\begin{equation}\label{eq10.4-1}
\phi_{(c,t)}(z)=|\theta_c(z)|^{-t},
\end{equation}
and
\begin{equation}\label{eq10.4-2}
\psi_z(c)=\dfrac{\phi_c(z)}{\theta_{c_0}(z)},\ (c,z)\in\cU\times\julia(F). 
\end{equation}

Since 
\[
\lim_{z\to+\infty}\left(\dfrac{e^z}{\ell-e^{z}}\right)=-1,
\]
we see that 
\begin{equation}\label{eq10.6}
M:=\sup\left\{\left|\dfrac{e^z}{\ell-e^z}\right|:z\in\julia(\Fc),\ \real (z)>-2\ell\right\}<+\infty.
\end{equation}

Taking $r>0$ sufficiently small, it follows from Proposition \ref{prop9.5} that 
\begin{equation}\label{eq10.7}
|h_c(z)-z|=|h_c(z)-h_{c_0}(z)|=\left| \int_{c_0}^c h_\gamma'(z)d\gamma\right|\leq\int_{c_0}^c|h_\gamma'(z)||d\gamma|\leq T|c-c_0|\leq Tr. 
\end{equation}
Observe that 
\begin{equation}\label{eq10.8}
|1-e^w|\leq E|w|,
\end{equation}
for all $w\in B(0,Tr)$ and some $E>0$. Now, for every $z\in\julia(F)$, we have 
\[
\psi_z(c)=\dfrac{F_c'(h_c(z))}{F_{c_0}'(z)}=\dfrac{\ell-e^{h_c(z)}}{\ell-e^z} = \dfrac{\ell-e^z+e^z-e^{h_c(z)}}{\ell-e^z} = 1 + \dfrac{e^z-e^{h_c(z)}}{\ell-e^z}.
\]
Hence, using Equations (\ref{eq10.6}), (\ref{eq10.7}), and (\ref{eq10.8}) we obtain 
\[
|\psi_z(c)-1|=\left|\dfrac{e^z-e^{h_c(z)}}{\ell-e^z}\right|=\dfrac{|e^z|}{|\ell-e^z|}\cdot|1-e^{h_c(z)-z}|\leq ME|h_c(z)-z|\leq METr<1/2,
\]
where the last inequality was written assuming that $r>0$ is small enough. So there exists a well-defined branch of $\log\psi_z:\cU(c_0)\to\CC$ that satisfies
\begin{equation}\label{prop10.5}
|\log\psi_z(c)|\leq M_1, \text{ for some }M_1>0.
\end{equation}

Fix $\xi_1,\xi_2\in\julia(F)$, since $\julia(F)$ is connected, the segment $[\xi_1,\xi_2]$ joining the points $\xi_1$ and $\xi_2$ lies entirely in $\{z\in Q:\real (z)>-2\ell\}$, so from (\ref{eq10.6})
\begin{equation}\label{eq10.9}
|\log(e^{\xi_2}-\ell)-\log(e^{\xi_1}-\ell)| =\left|\int_{\xi_1}^{\xi_2}\dfrac{e^z}{\ell-e^z}dz\right|\leq\int_{\xi_1}^{\xi_2}\left|\dfrac{e^z}{\ell-e^z}\right||dz|\leq M|\xi_2-\xi_1|. 
\end{equation}

Fix now $z_1,z_2\in\julia(F)$ with $|z_2-z_2|\leq\delta$. Applying Proposition \ref{prop9.6}, (\ref{eq10.2}), and (\ref{eq10.9}), we have 
\begin{align*}
|\log\psi_{z_2}(c)-\log\psi_{z_1}(c)| & = |\log(F_c'\circ h_c(z_2))-\log(F_{c_0}'\circ h_{c_0}(z_2))-\log(F_c'\circ h_c(z_1)) + \log(F_{c_0}'\circ h_{c_0}(z_1))| \\
	& = |\log(\ell-e^{h_c(z_2)})-\log(\ell-e^{h_{c_0}(z_2)}) - \log(\ell-e^{h_{c_0}(z_1)}) + \log(\ell-e^{z_1})| \\ 
	& \leq |\log(\ell-e^{h_c(z_2)})-\log(\ell-e^{h_{c}(z_1)})| + |\log(\ell-e^{z_2})-\log(\ell-e^{z_1})| \\
	& \leq M|h_c(z_2)-h_c(z_1)| + M|z_2-z_1| \\
	& \leq M\hat{K}_{c_0}|z_2-z_1|^{\alpha}
\end{align*} 

Hence for every $c\in\cD(c_0,r)$, the map $z\mapsto\log\psi_z(c)$, $z\in\julia(F)$, belongs to $\hh$ and its Hölder constant is bounded from above by $M\hat{K}_{c_0}$. Since the function $\log\psi_z:\cD(c_0,r)\to\C$ is holomorphic, it is uniquely represented as a power series 
\[
\log\psi_z(c)=\sum_{n=0}^\infty a_n(z)(c-c_0)^n. 
\]
By Cauchy's inequality
\begin{equation}\label{eq10.10}
|a_n(z)|\leq\dfrac{M_1}{r^n},\ \forall n\geq0.
\end{equation}
For every $c=x+iy)\in\cD(c_r,r)\subset\C$, we have 
\begin{align}\label{eq10.11}
\real\log\psi_z(c) & = \real \left(\sum_n a_n(z)\left( (x-\real(c_0)) + i(y-\ima(c_0)\right)^n\right) \\
	& = \sum_{p,q=0}^\infty c_{p,q}(z)(x-\real(c_0))^p(y-\ima(c_0))^q, \notag
\end{align}
where $c_{p,q}(z)=a_{p+q}(z)\left(\begin{array}{c} p+q \\ q \end{array}\right)i^q$. From Equation (\ref{eq10.10})
\begin{equation}\label{eq10.12}
|c_{p,q}(z)|\leq|a_{p+q}(z)|\cdot 2^{p+q}\leq M_12^{p+q}r^{-(p+q)}. 
\end{equation}
Hence $\real\log\psi_z(c)$ extends by the same power series expansion (\ref{eq10.11}) to a holomorphic function on the polydisc $\D_{\C^2}(c_0,r/4)$. We denote this extension by the same symbol $\real\log\psi_z$ and we have 
\begin{equation}\label{eq10.13}
|\real\log\psi_z(c)|\leq \sum_{p,q=0}^\infty M_12^{-(p+q)} = 4M,
\end{equation}
on $\D_{\C^2}(c_0,r/4)$. So, for every $t\in\D_{\C}(t,\rho)$, where $\rho=t_0-1$, the formula 
\begin{equation}\label{eq10.14}
\zeta_{(c,t)}(z)=-(t\real\log\psi_z(c) + t\log|\theta_{c_0}(z)|), 
\end{equation}
extends $-t\log|\theta_c(z)|$ on the polydisc $\D_{\C^2}(c_0,r/4)\times\D_\C(t,\rho)$. Now, due to (\ref{eq10.13}), for every $(c,t)\in\D_{\C^2}(c,r/4)\times\D_\C(t,\rho)$ and every $z\in\julia(F)$, we have 
\begin{align}\label{eq10.15}
|e^{\zeta_{c,t}(z)}| & = \exp(\real(-t\real\log\psi_z(c)))|\theta_{c_0}(z)|^{-\real(t)} \notag\\
	& \leq \exp(|t||\real\log\psi_z(c)|)|\theta_{c_0}(z)|^{-\real(t)} \\
	& \leq e^{4M_1|t|}|\theta_{c_0}(z)|^{-\real(t)}. \notag
\end{align}
Since, by definition, $|\theta_{c_0}|^{-\real t}$ is summable, it therefore follows that each function 
\[
\phi_{(c,t)} = e^{\zeta_{(c,t)}}, \ (c,t)\in\D_{\C^2}(c_0,r/4)\times\D_\C(t_0,\rho),
\]
is summable. In order to prove the proposition we shall check that the family of functions $\phi_{(c,t)}$ satisfies all assumptions of Theorem \ref{cor8.7}. 

We define
\[
\fL(c,t)=\cL_{c,t}^0,
\]
so we satisfied Equation (\ref{eq10.3}).

\noindent \textbf{Claim:} The family of functions $\{\phi_{(c,t)}\}_{(c,t)\in G}$, with $G=\D_{\C^2}(c_0,r/4)\times\D_\C(t_0,\rho)\subset\C^3$ holds assumptions in Theorem \ref{cor8.7}. 


\noindent \textit{(a) $\phi_{(c,t)}$ is in $\hh^s$:} We already check that $\phi_{(c,t)}$ is summable. Recall that for every $c\in\cD(c_0,r)$ the map $z\mapsto\log\psi_z(c)$, $z\in\julia)(F)$, is in $\hh$ with Hölder constand bounded from above by $M\hat{K}_{c_0}$, from Cauchy's inequalities we have 
\[
|a_n(z)-an(w)|\leq M\hat{K}_{c_0}\left(\dfrac{4}{r}\right)^n|z-w|^\alpha,
\]
for all $z,w\in\julia(\Fc)$ with $|z-w|\leq\delta$, so 
\begin{equation}\label{eq10.16}
|c_{p,q}(z)-c_{p,q}(w)|\leq M\hat{K}_{c_0}2^{p+q}\left(\dfrac{4}{r}\right)^{p+q}|z-w|^\alpha = M\hat{K}_{c_0}\left(\dfrac{8}{r}\right)^{p+q}1z-w|^\alpha.
\end{equation}
Hence
\begin{equation}\label{eq10.17}
|\real \log\psi_z(c)-\real\log\psi_w(c)1\leq M\hat{K}_{c_0}\sum_{p,q=0}^\infty\left(\dfrac{8}{r}\right)^{p+q}\left(\dfrac{r}{16}\right)^p\left(\dfrac{r}{16}\right)^q|z-w|^\alpha=4M\hat{K}_{c_0}|z-w|^\alpha,
\end{equation}
for every $c\in\D_{\C^2}(c_0,r/16)$ and all $z,w\in\julia(F)$ with $|z-w|\leq\delta$. From Equation (\ref{eq10.13}) we know that $z\mapsto\real\log\psi_z(c)$, $z\in\julia(F)$ is in $\hh$ for every $c\in\D_{\C^2}(c_0,r/4)$. Since $\theta_{c_0}(z) = F_{c_0}'(z)$ we get that $\log|\theta_{c_0}(z)|=\log|\ell-e^z|$. Combining Equation (\ref{eq10.9}), (\ref{eq10.14}, and \ref{eq10.17}) we have 
\begin{align}\label{eq10.18}
|\zeta_{(c,t)}(z)-\zeta_{(c,t)}(w)| & = |t|(|\real\log\psi_z(c)-\real\log\psi_w(c)| -\log|\theta_{c_0}(z)| + \log|\theta_{c_0}(w)||) \notag \\ 
	& \leq (t_0+\rho)\left(4M\hat{K}_{c_0}|z-w|^\alpha + |\log|\theta_{c_0}(z)|-\log|\theta_{c_0}(w)||\right) \notag \\ 
	& = (t_0+\rho)\left(4M\hat{K}_{c_0}|z-w|^\alpha + \left|\log|e^z-\ell|-\log|e^w-\ell|\right|\right) \notag \\ 
	& = (t_0+\rho)\left(4M\hK_{c_0}|z-w|^\alpha + \left|\real\left(\log(e^z-\ell)-\log(e^w-\ell)\right)\right|\right)  \\
	& \leq (t_0+\rho)\left(4M\hK_{c_0}|z-w|^\alpha + \left|\log(e^z-\ell)-\log(e^w-\ell)\right|\right) \notag \\ 
	& \leq (t_0+\rho)\left(4M\hK_{c_0}|z-w|^\alpha + M|z-w|\right) \notag \\
	& \leq M(4\hK_{c_0}+1)(t_0+\rho)|z-w|^\alpha, \notag
\end{align}
for every $(c,t)\in\D_{\C^2}(c_0,r/16)\times\D_\C(t_0,\rho)$. Now, from Equation (\ref{eq10.15}) there exists $M_2>0$ such that  
\begin{equation}\label{eq10.19}
|\phi_{(c,t)}(z)|=|e^{\zeta_(c,t)(z)}|\leq M_2,
\end{equation}
for all ($(c,t)\in\D_{\C^2}(c_0,r/16)\times\D_\C(t_0,\rho)$ and all $z\in\julia(F)$. Let $M_3>0$ be such that $|e^\eta-1|<M_3|\eta|$ for all $\eta\in\C$ with $|\eta|\leq C\delta^\alpha$. Combining Equations (\ref{eq10.18}) and (\ref{eq10.18}), we obtain 
\begin{align*}
|\phi_{(c,t)}(z)-\phi_{(c,t)}(w)| & = |e^{\zeta_{(c,t)}(w)}|\cdot|e^{\zeta_{(c,t)}(z)-\zeta_{(c,t)}(w)}-1|\leq M_3M_2|\zeta_{(c,t)}(z)-\zeta_{(c,t)}(w)| \\ 
	& \leq CM_2M_3(|t_0|+\rho)|z-w|^\alpha,
\end{align*}
for all $(c,t)\in\D_{\C^2}(c_0,r/16)\times\D_{\C}(t_0,\rho)$ and all $z,w\in\julia(F)$ with $|z-w|\leq\delta$. In particular, we have that $\phi_{(c,t)}\in\hh$ verifying (a). 

\noindent \textit{(b) and (d) For every $(c,t)\in G$, the function $\phi_{(c,t)}$ is dynamically $\alpha$-Hölder with Hölder constants $c_{\phi_\alpha}$ uniformly bounded:} 

Fix $c\in\D_{\C^2}(c_0,r/16)$, $n\geq1$, $v\in F^{-n}(x)$ and $x,y\in\julia(F)$ with $|x-y|\leq\delta$. Applying Equations (\ref{eq1*}) and (\ref{eq10.18}) we have 
\begin{align*}
|\sum_{i=0}^{n-1}\zeta_{(c,t)}(F^i(F_v^{-n}(y)))-\sum_{i=0}^{n-1}\zeta_{(c,t)}(F^i(F_v^{-n}(x)))| & \leq \sum_{i=0}^{n-1}|\zeta_{(c,t)}(F^i(F_v^{-n}(y)))-\zeta_{(c,t)}(F^i(F_v^{-n}(x)))| \\ 
	& \leq \sum_{i=0}^{n-1}C(|t_0|+\rho)|F^i(F_v^{-n}(y))-F^i(F_v^{-n}(x))|^\alpha \\ 
	& \leq CL^{\alpha}(|t_0|+\rho)\sum_{i=0}^{\infty}\beta^{(n-i)\alpha}|y-x|^{\alpha} \\ 
	& \leq \dfrac{CL^\alpha}{1-\beta^{\alpha}}(|t_0|+\rho)|y-x|^\alpha. 
\end{align*}
By putting $M_4=\sup\left\{\left|\dfrac{e^{-z}-1}{z}\right|:|z|\leq\dfrac{CL^\alpha}{1-\beta^\alpha}(|t_0|+\rho)\right\}<\infty$, we obtain
\begin{align*}
D & = |\phi_{(c,t),n}(F_v^{-n}(y)) - \phi_{(c,t),n}(F_v^{-n}(x))| \\ 
	& \leq |\phi_{(c,t),n}(F_v^{-n}(x))|\left|\exp\left(\sum_{i=0}^{n-1}\zeta_{(c,t)}(F^i(F_v^{-n}(y))) - \sum_{i=0}^{n-1}\zeta_{(c,t)}(F^i(F_v^{-n}(x)))\right) - 1 \right| \\ 
	& \leq \dfrac{CM_4L^\alpha}{1-\beta^\alpha}(|t_0|+\rho)|\phi_{(c,t),n}(F_v^{-n}(x))|\cdot|y-x|^\alpha,
\end{align*}
proving simultaneously assumptions (b) and (d). 

\noindent \textit{(c) The function $(c,t)\mapsto\phi_{(c,t)}$ is continuous} 

Considering that 
\[
\phi_{(c,t)}(z) = e^{-t\real\log\psi_z(c)}\cdot|\theta_{c_0}(z)|^{-t}, 
\]
it is enough to show that both maps $z\mapsto e^{-t\real\log\psi_z(c)}$ and $z\mapsto|\theta_{c_0}(z)|^{-t}$ are in $\hh$ and that the maps 
\[
(c,t)\mapsto e^{-t\real\log\psi_{(\cdot)}(c)}\in\hh\quad\text{and}\qquad (c,t)\mapsto|\theta_{c_0}(\cdot)|^{-t}\in\hh,
\]
are continuous. We already know that the map $z\mapsto\real\log\psi_z(c)$, $z\in\julia(F)$, is in $\hh$ which implies that the map $z\mapsto t\real\log\psi_z(c)$, $z\in\julia(F)$, is in $\hh$ for every $t\in\R$. We aim to prove that $(c,t)\mapsto t\real\log\psi_{(\cdot)}(c)\in\hh$ is a continuous map on $\D_{\C^3}((c_0,t),R)$ for some $R>0$ sufficiently small. It is not difficult to see that such map is continuous with respect to the variable $t$ on $\D_{\C^2}(c_0,r/16)\times\D_\C(t_0,\rho)$. So it is enough to prove that $c\mapsto-t\real\log\psi_{(\cdot)}(c)\in\hh$ is Lipschitz continuous with Lipschitz constant independent of $t$. We fix $c=(c_x,c_y)$, $c'=(c_x',c_y')\in\D_{\C^2}(c_0,r/16)$. From Equation (\ref{eq10.11}), for all $z\in\julia(F)$ we have 
\begin{align}\label{eq10.20}
|\cdot| & = |\real\log\psi_z(c')-\real\log\psi_z(c)| \\ 
	& = \sum_{p,q=0}^\infty c_{p,q}(z)\left((c_x'-\real c_0)^p(c_y'-\ima c_0)^q - )c_x-\real c_0)^p(c_y-\ima  c_0)^q\right). \notag
\end{align}
Let $a_x=c_x'-\real c_0$, $a_y=c_y'-\ima c_0$, $b_x=c_x-\real c_0$, and $b_y=c_y-\ima c_0$. Then 
\begin{align}\label{eq10.21}
|a_x^pa_y^q - b_xb_y^q|	& = |a_x^p(a_y^q-b_y^q) + b_y^q(a_x^p-b_x^p)| \notag \\ 
	& = |a_x^p||a_y-b_y|\sum_{i=0}^{q-1}|a_y|^i|b_y|^{q-1-i} + |b_y^q||a_x-b_x|\sum_{i=0}^{p-1}|a_x|^i|b_x|^{´p-1-i} \\ 
	& \leq \left(q\left(\dfrac{r}{16}\right)^p\left(\dfrac{r}{16}\right)^{q-1} + p\left(\dfrac{r}{16}\right)^q\left(\dfrac{r}{16}\right)^{p-1}\right)\|c'-c\| \notag \\ 
	& \leq \dfrac{16}{r}(p+q)\left(\dfrac{r}{16}\right)^p\left(\dfrac{r}{16}\right)^q\|c'-c\|. \notag
\end{align}
Combining with Equations (\ref{eq10.12}) and (\ref{eq10.20}) we obtain 
\[
|\real\log\psi_z(c') - \real\log\psi_z(c)|\leq\dfrac{16}{r}\|c'-c\|\sum_{p,q=0}^\infty (+q)8^{-(p+q)} = \dfrac{16 C_1}{r}\|c'-c\|, 
\]
(where $C_1 = \sum_{p,q=0}^\infty (p+q)8^{-(p+q)}<+\infty$) for all $t\in\D_\C(t_0,\rho/2)$ and all $c,c'\in\D_{\C^2}(c_0,r/16)$. Now fix $z,w\in\julia(F)$ with $|z-w|\leq\delta$. It follows from (\ref{eq10.16}) and (\ref{eq10.21}) that 
\begin{align*}
|\cdot| & = |t|\left|\real\log \psi_w(c') - \real\log\psi_w(c) - (\real\log\psi_z(c') - \real\log\psi_z(c))\right| \\ 
	& = |t|\left|\sum_{p,q=0}\infty (c_{p,q}(w) - c_{p,q}(z))\left((c_x'-\real c_0)^p(c_y'-\ima c_0)^q - (c_x-\real c_0)^p(c_y-\ima c_0)^q)\right)\right| \\ 
	& \leq (|t_0|+\rho)\dfrac{32C}{r}|z-w|^\alpha\|c'-c\|\sum_{p,q=0}^\infty(p+q)2^{-(p+q)} \\ 
	& \leq \dfrac{4M\hat{K}_{c_0}C_2}{r}(|t_0|+\rho)\|c'-c\||z-w|^\alpha, 
\end{align*}
where $C_2=\sum_{p,q=0}^\infty(p+q)2^{-(p+q)}<+\infty$.  Which implies 
$$
v_\alpha\left(-t\real\log\psi_{(\cdot)}(c) - (-t\real\log\psi_{(\cdot)}(c'))\right) \leq 4M\hK_{c_0}C_2(|t_0|+\rho)r^{-1}\|c'-c\|.
$$
for all $c,c'\in\D_{\C^2}(c_0,r/16)$ and $t\in\D_\C(t_0,\rho)$, proving the continuity of the map $(c,t)\mapsto\psi_{(c,t)}(\cdot)=\exp\left(-t\real\log\psi_{(\cdot)}(c)\right)$, $(c,t)\in\D_{\C^2}(c_0,r/16)\times\D_\C(c_0,\rho)$. This, combining with Equation (\ref{eq10.13}) and the inequality $|e^b-e^a|=|e^a||e^{b-a}-1|\leq A|b-a|$, for some $A$ depending on $|a|$, we have the continuity of the function $(c,t)\mapsto\psi_{(c,t)}(\cdot)=\exp\left(-t\real\log\psi_{(\cdot)}(c))\right)$. 

Rest to prove that the function $z\mapsto|\theta_{c_0}(\cdot)|^{-t}$, $z\in\julia(F)$, is in $\hh$ for $(c,t)\in\D_{\C^2}(c_0,r/16)\times\D_\C(t_0,\rho)$. We know that 
$$
|\theta_{c_0}(z)|^{-t}=|\ell-e^z|^{-t}=\exp\left(-t\log|\ell-e^z|\right).
$$
It follows that 
$$
M_5=\sup \{|\theta_{c_0}(\cdot)|^{-t}:z\in\julia(F)\}<+\infty.
$$
Fixing additional $w\in\julia(F)\cap B(z,\delta)$, combining Equations (\ref{eq10.8}) and (\ref{eq10.9}) we get 
\begin{align*}
\left||\theta_{c_0}(z)|^{-t}-|\theta_{c_0}(w)|^{-t}\right| & = \left|\exp\left(-t\log|\ell-e^z|\right) - \exp\left(-t\log|\ell-e^z|\right)\right| \\ 
	& = |\ell-e^z|^{-t}|1 - \exp\left(t(\log|\ell-e^z| - \log|\ell-e^w|)\right)| \\ 
	& \leq M_5E\left|t(\log|\ell-e^z| - \log|\ell-e^z1)\right| \\ 
	& \leq M_5E(t_0\rho)M|z-w| \\ 
	& \leq M_5E(t_0+\rho)M|z-w|^\alpha. 
\end{align*}
Proving that the map is in $\hh$. Since $|\theta_{c_0}(z)|^{-t}$ does not depend on $c$, to prove its continuity it is enough to prove it only for $t$. 

We notice first that 
$$
\nabla(\ell-e^z) = -e^z(1,i),
$$
and
$$
\nabla(\overline{(\ell-e^z)} = \nabla(\ell-e^{\overline{z}})=-e^{\overline{z}}(1,-i) = -\overline{e^{{z}}}(1,-i).
$$
Using Leibnitz rule, 
$$
\nabla(|\ell-e^z|^2)=\nabla\left((\ell-e^z)\overline{(\ell-e^z)}\right) = -(\ell-e^z)\overline{e^{{z}}}(1,-i) - (\ell-\overline{e^{{z}}})e^z(1,i).
$$
Since $\real z\geq-2\ell$ we have 
$$
\|\nabla(|\ell-e^z|^2)\| \leq 2\ell|e^z\overline{e^{{z}}}|\|(1,-i)\|+2\ell|\overline{e^z}e^z|\|(1,i)\| = 4\sqrt{2}\ell|e^{2x}|. 
$$
Thus
$$
\|\nabla(|\ell-e^z|)\| = \|\nabla\left((|\ell-e^z|^2)^{1/2}\right)\|=\dfrac{1}{2}|\ell-e^z|^{-t}\|\nabla(|\ell-e^z|^2)\|\leq2\sqrt{2}\ell|e^{2x}||\ell-e^z|^{-1}.
$$
Since 
$$
\nabla\left(|\theta_{c_0}(z)|^{-t}\right) = \nabla\left(|\ell-e^z|^{-t}\right) = -t|\ell-e^z|^{-t-1}\nabla(|\ell-e^z|), 
$$
then, for all $t_1,t_2\in B(t_0,\rho)$ we have 
\begin{align*}
\lVert\nabla\left(|\theta_{c_0}(z)|^{-t_1} - |\theta_{c_0}(z)|^{-t_2}\right) \rVert & = |\ell - e^z|^{-1}\left| t_1|\ell-e^z|^{-t_1} - t_2|\ell-e^z|^{-t_2}\right|\|\nabla(|\ell-e^z|)\| \\
	& \leq 2\sqrt{2}\ell|e^{2x}||\ell-e^z|^{-2}\left||\ell-e^z|^{-t_1} - t_2|\ell-e^z|^{-t_2}\right|. 
\end{align*}
Since $(ta^{-t})'=a^{-t}(1-t\log a)$, using the Mean Value Inequality, we can continue the above estimates as follows 
\begin{align*}
|\cdot| & = \left\|\nabla\left(|\theta_{c_0}(z)|^{-t_1} - |\theta_{c_0}(z)|^{-t_2}\right)\right\| \\
	& \leq 2\sqrt{2}\ell|e^{2z}||\ell-e^z|^{-2}\sup\{|\ell-e^z|^{-t}(1-t\log|\ell-e^z|):t\in[t_1,t_2]\}|t_2-t_1|. 
\end{align*}
Since $\julia(F)\subset\{z\in Q:\real z\geq -2\ell\}$, we that 
$$
M_6:=\sup\left\{\left|\dfrac{e^z}{\ell-e^z}\right|:z\in\julia(F)\right\}<+\infty.
$$
Hence, from 
$$
M_7:=\sup\{|\ell-e^z|^{-t}(1-t\log|\ell-e^z|):z\in\julia(F),t\in[t_1,t_2]\}<+\infty,
$$
we obtain 
$$
\left|\nabla\left(|\theta_{c_0}(z)|^{-t_1} - |\theta_{c_0}(z)|^{-t_2}\right)\right|\leq2\sqrt{2}\ell M_6M_7|t_2-t_1|.
$$  
Then, for every $z\in\julia(F)$ and every $w\in B(z,\delta)$ we have 
\begin{align*}
\left||\theta_{c_0}(w)|^{-t_1}-|\theta_{c_0}(w)|^{-t_2} - \left(|\theta_{c_0}(z)^{-t_1}-|\theta_{c_0}(z)|^{-t_2}\right)\right| & \leq \int_z^w\left|\nabla\left(|\theta_{c_0}(\xi)|^{-t_1} - |\theta_{c_0}(\xi)|^{-t_2}\right)\right||d\xi| \\ 
	& \leq 2\sqrt{2}\ell M_6M_7|t_2-t_1||w-z| \\ 
	& \leq 2\sqrt{2}\ell M_6M_7|t_2-t_1||w-z|^{\alpha}.
\end{align*}
Again, with the Mean Value Inequality we obtain 
\begin{align*}
\left||\theta_{c_0}(z)|^{-t_1} - |\theta_{c_0}(z)|^{-t_2}\right| & \leq \log|\ell-e^z|\sup\{|\ell-e^z|:z\in\julia(F),t\in[t_1,t_2]\}|t_2-t_1| \\
	& \leq M_7|t_2-t_1|. 
\end{align*}
Which proves the continuity of the map $(c,t)\mapsto |\theta_{c_0}(\cdot)|^{-t}\in\hh$ completing the proof of assumption (c). 

\noindent \textit{(d) The function $(c,t)\mapsto\phi_{(c,t)}(z)\in C$, $(c,t)\in G$ is holomorphic for every $z\in\julia(F)$} 
This follows directly from the definition of $\phi_{(c,t)}$ and the first part of this proof. 

Rest to prove assumption (f):

$$\sup\left\{\left|\dfrac{\phi_{(c,t)}}{\phi_{(c_1,t_1)}}\right|:(c,t)\in\overline{B((c_2,t_2),r)}\right\}.$$
Fix $c_2\in\D_{\C^2}(c_0,r/16)$ and $t_2\in\D_\C(t_0,\rho)$. Take $\gamma>0$ so small that $\D_{\C^2}(c_2,\gamma)\subset\D_{\C^2}(c_0,r/16)$ and $\D_\C(t_2,2\gamma)\subset\D_\C(t_0,\rho)$. Then fix arbitrary $c_1\in\D_{\C^2}(c_0,r/16)$ and $t_1\in(1,\real(t_2)-\gamma)$. Then for every $(c,t)\in\D_{\C^2}(c_2,\gamma)\times\D_\C(t_2,\gamma)$, we have 
\begin{align*}
\left|\dfrac{\phi_{(c,t)}(z)}{\phi_{(c_1,t_1)}(z)}\right| & = \dfrac{e^{-t\real\log\psi_z(c)}}{e^{-t_1\real\log\psi_z(c_1)}}|\theta_{c_0}(z)|^{-(t-t_1)} = e^{t_1\real\log\psi_z(c_1) - t\real\log\psi_z(c)}\cdot|\theta_{c_0}(z)|^{-(t-t_1)} \\ 
	& = e^{t_1(\real\log\psi_(c_1)-\real\log\psi_z(c))}\cdot e^{(t_1-t)\real\log\psi_z(c)}\cdot|\theta_{c_0}(z)|^{-(t-t_1)}. 
\end{align*}
Using Equation (\ref{eq10.13}) we have the estimation 
$$
|e^{t_1(\real\log\psi_z(c_1)-\real\log\psi_z(c))}|=e^{\real(t_1(\real\log\psi_z(c_1)-\real\log\psi_z(c)))}\leq e^{|t_1(\real\log\psi_z(c_1)-\real\log\psi_z(c))|}\leq e^{8t_1M1},
$$
and
$$
|e^{(t_1-t)\real\log\psi_z(c)}|=e^{\real((t_1-t)\real\log\psi_z(c))}\leq e^{|(t_1-t)\real\log\psi_z(c)|}\leq e^{4\rho M_1}. 
$$
Since $A=\inf_{z\in\julia(F)}|\theta_{c_0}(z)|$ is positive, $\real(t_1-t)<0$ and $\real(t_1-t)>-\rho$, we have 
$$
|\theta_{c_0}(z)|^{-(t-t_1)}=|\theta_{c_0}(z)|^{\real(t_1-t)}\leq\min\{1,|\theta_{c_0}(z)|\}^{\real(t_1-t)}\leq\min\{1,|\theta_{c_0}(z)|\}^{-\rho}\leq\min\{1,A\}^{\rho}.
$$
Therefore 
$$
\left|\dfrac{\phi_{(c,t)}(z)}{\phi_{(c_1,t_2)}(z)}\right|\leq \exp\left(8t_1M_1+4\rho M_1\right)\min\{1,A\}^{-\rho},
$$
which verifies assumption (f) and concludes the proof. 

\end{proof}


\end{document}